\let\eps=\eta
\newcommand{\cond}[1]{Condition~\ref{cond:#1}}
\numberwithin{equation}{section}
\newcommand{\bigo}{{\mathcal O}}
\newcommand{\half}{\frac{1}{2}}
\def\XXint#1#2#3{{\setbox0=\hbox{$#1{#2#3}{\int}$}
     \vcenter{\hbox{$#2#3$}}\kern-.5\wd0}}
\DeclareMathOperator{\diag}{diag}
\DeclareMathOperator{\imag}{Im}
\DeclareMathOperator{\real}{Re}
\newenvironment{mat}{\left[\begin{array}{ccccccccccccccc}}{\end{array}\right]}
\newcommand\bcm{\begin{mat}}
\newcommand\ecm{\end{mat}}
\newcommand{\bea}{\begin{eqnarray}}
\newcommand{\eea}{\end{eqnarray}}
\newcommand{\bean}{\begin{eqnarray*}}
\newcommand{\eean}{\end{eqnarray*}}
\newcommand{\ba}{\begin{array}}
\newcommand{\ea}{\end{array}}
\newcommand{\beqs}{\begin{equation*}\begin{split}}
\newcommand{\lem}[1]{Lemma~\ref{#1}}
\newtheorem{example}{Example}[section]
\newtheorem{remark}{Remark}[section]
\newtheorem{lemma}{Lemma}[section]
\newtheorem{theorem}{Theorem}[section]
\newtheorem{definition}{Definition}[section]
\newtheorem{corollary}{Corollary}[section]
\newtheorem{proposition}{Proposition}[section]
\newtheorem{condition}{Condition}[section]
\long\def\symbolfootnote[#1]#2{\begingroup%
\def\thefootnote{\fnsymbol{footnote}}\footnote[#1]{#2}\endgroup}
\newcommand{\D}{\mathrm{d}}
\newcommand{\I}{\mathrm{i}}
\newcommand{\E}{\mathrm{e}}
\newcommand{\ds}{\displaystyle}
\begin{document}
\renewcommand*{\thefootnote}{\fnsymbol{footnote}}
\title{Universality for the Toda algorithm to compute the largest eigenvalue of a random matrix}
\author{Percy Deift$^*$ and Thomas Trogdon$^\dagger$\\
\phantom{.}\\
$^*$Courant Institute of Mathematical Sciences\\
New York University\\
251 Mercer St.\\
New York, NY 10012, USA\\
\phantom{.}\\
$^\dagger$Department of Mathematics\\
University of California, Irvine\\
Rowland Hall\\
Irvine, CA 92697-3875, USA
}
\maketitle

\newcommand{\blue}[1]{{\color{blue}#1}}

\renewcommand{\blue}[1]{#1}


\footnotetext[1]{Email: deift@cims.nyu.edu.}
\footnotetext[2]{Email: ttrogdon@math.uci.edu (corresponding author).}
\vspace{-.2in}

\begin{abstract}
\blue{We prove universality for the fluctuations of the halting time for the Toda algorithm to compute the largest eigenvalue of real symmetric and complex Hermitian matrices.}  The proof relies on recent results on the statistics of the eigenvalues and eigenvectors of random matrices (such as delocalization, rigidity and edge universality) in a crucial way.
\end{abstract}

\noindent
\textbf{Keywords:} universality, Toda Lattice, random matrix theory\\

\noindent
\textbf{MSC Classification:}  15B52, 65L15, 70H06
\renewcommand*{\thefootnote}{\arabic{footnote}}
\section{Introduction}
In \cite{DiagonalRMT} the authors initiated a statistical study of the performance of various standard algorithms $\mathcal A$ to compute the eigenvalues of random real symmetric matrices $H$. Let $\Sigma_N$ denote the set of real $N \times N$ symmetric matrices.  Associated with each algorithm $\mathcal A$, there is, in the discrete case such as QR, a map $\varphi = \varphi_{\mathcal A}: \Sigma_N \to \Sigma_N$, with the properties
\begin{itemize}
\item (isospectral) $\mathrm{spec}(\varphi_{\mathcal A}(H)) = \mathrm{spec(H)}$,
\item (convergence) the iterates $X_{k+1} = \varphi_{\mathcal A}(X_k)$, $k \geq 0$, $X_0 = H$ given, converge to a diagonal matrix $X_\infty$, $X_k \to X_\infty$ as $k \to \infty$,
\end{itemize}
and in the continuum case, such as Toda, there is a flow $t \mapsto X(t) \in \Sigma_N$ with the properties
\begin{itemize}
\item (isospectral) $\mathrm{spec}(X(t))$ is constant,
\item (convergence)  the flow $X(t)$, $t \geq 0$,  $X(0) = H$ given, converges to a diagonal matrix $X_\infty$, $X(t) \to X_\infty$ as $t \to \infty$.
\end{itemize}
In both cases, necessarily, the (diagonal) entries of $X_\infty$ are the eigenvalues of the given matrix $H$.

Given $\epsilon > 0$, it follows, in the discrete case, that for some $m$ the off-diagonal entries of $X_m$ are $\mathcal O(\epsilon)$ and hence the diagonal entries of $X_m$ give the eigenvalues of $X_0 = H$ to $\mathcal O(\epsilon)$.  The situation is similar for continuous algorithms $t \mapsto X(t)$.  Rather than running the algorithm until all the off-diagonal entries are $\mathcal O (\epsilon)$, it is customary to run the algorithm with \emph{deflations} as follows.  For an $N \times N$ matrix $Y$ in block form
\begin{align*}
  Y = \begin{mat} Y_{11} & Y_{12} \\ Y_{21} & Y_{22} \end{mat},
\end{align*}
with $Y_{11}$ of size $k\times k$ and $Y_{22}$ of size $N -k \times N- k$ for some $k \in \{1,2,\ldots,N-1\}$, the process of projecting $Y \mapsto \diag(Y_{11},Y_{22})$ is called deflation.  For a given $\epsilon$, algorithm $\mathcal A$ and matrix $H \in \Sigma_N$, define the \emph{$k$-deflation} time $T^{(k)}(H) = T_{\epsilon,\mathcal A}^{(k)}(H)$, $1 \leq k \leq N-1$, to be the smallest value of $m$ such that $X_m$, the $m$th iterate of algorithm $\mathcal A$ with $X_0 = H$, has block form
\begin{align*}
X_m = \begin{mat} X^{(k)}_{11} & X^{(k)}_{12}\\
X^{(k)}_{21} & X^{(k)}_{22} \end{mat},
\end{align*}
with $X_{11}^{(k)}$ of size $k \times k$ and $X_{22}^{(k)}$ of size $N-k \times N-k$ and\footnote{Here we use $\|\cdot\|$ to denote the Frobenius norm $\|X\|^2 = \sum_{i,j} |X_{ij}|^2$ for $X = (X_{ij})$.} $\|X_{12}^{(k)}\| = \|X_{21}^{(k)}\| \leq \epsilon$.  The defation time $T(H)$ is then defined as
\begin{align*}
  T(H) = T_{\epsilon,\mathcal A}(H) = \min_{1 \leq k \leq N-1} T_{\epsilon,\mathcal A}^{(k)}(H).
\end{align*}
If $\hat k \in \{1,\ldots,N-1\}$ is such that \blue{$T(H) = T_{\epsilon,\mathcal A}^{(\hat k)}(H)$}, it follows that the eigenvalues of $H = X_0$ are given by the eigenvalues of the block-diagonal matrix $\diag(X_{11}^{(\hat k)}, X_{22}^{(\hat k)})$ to $\mathcal O(\epsilon)$.  After, running the algorithm to time $\blue{T_{\epsilon,A}(H)}$, the algorithm restarts by applying the basic algorithm $\mathcal A$ separately to the smaller matrices $X_{11}^{(\hat k)}$ and $X_{22}^{(\hat k)}$ until the next deflation time, and so on.  There are again similar considerations for continuous algorithms.

As the algorithm proceeds, the number of matrices after each deflation doubles.  This is counterbalanced by the fact that the matrices are smaller and smaller in size, and the calculations are clearly parallelizable.  Allowing for parallel computation, the number of deflations to compute all the eigenvalues of a given matrix $H$ to a given accuracy $\epsilon$, will vary from $\mathcal O(\log N)$ to $\mathcal O(N)$.

In \cite{DiagonalRMT}  the authors considered the deflation time $T= T_{\epsilon,\mathcal A}$ for $N \times N$ matrices chosen from a given ensemble $\mathcal E$.  \blue{Henceforth in this paper we suppress the dependence on $\epsilon, N, \mathcal A$ and $\mathcal E$, and simply write $T$ with these variables understood.}   For a given algorithm $\mathcal A$ and ensemble $\mathcal E$ the authors computed $T(H)$ for 5,000-15,000 samples of matrices $H$ chosen from $\mathcal E$, and recorded the \emph{normalized deflation time}
\begin{align}\label{e:ndt}
\tilde T(H) :=  \frac{T(H) - \langle T\rangle }{\sigma},
\end{align}
where $\langle T\rangle$ and $\sigma^2 = \langle (T - \langle  T \rangle)^2 \rangle$ are the sample average and sample variance of $T(H)$, respectively.  Surprisingly, the authors found that for the given algorithm $\mathcal A$, and $\epsilon$ and $N$ in a suitable scaling range with $N \to \infty$, the histogram of $\tilde T$ was universal, independent of the ensemble  $\mathcal E$.  In other words, the fluctuations in the deflation time $\tilde T$, suitably scaled, were universal, independent of $\mathcal E$.
\begin{figure}[tbp]
\centering
\subfigure[]{\includegraphics[width=.4\linewidth]{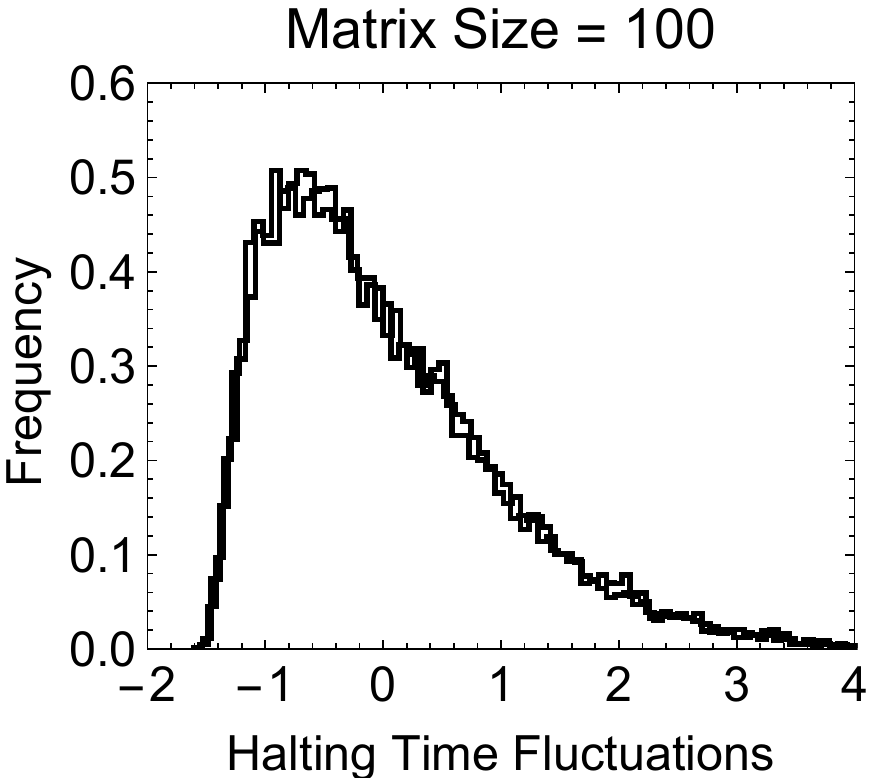}\label{f:PDM-QR}}
\hspace{.3in}\subfigure[]{\includegraphics[width=.4\linewidth]{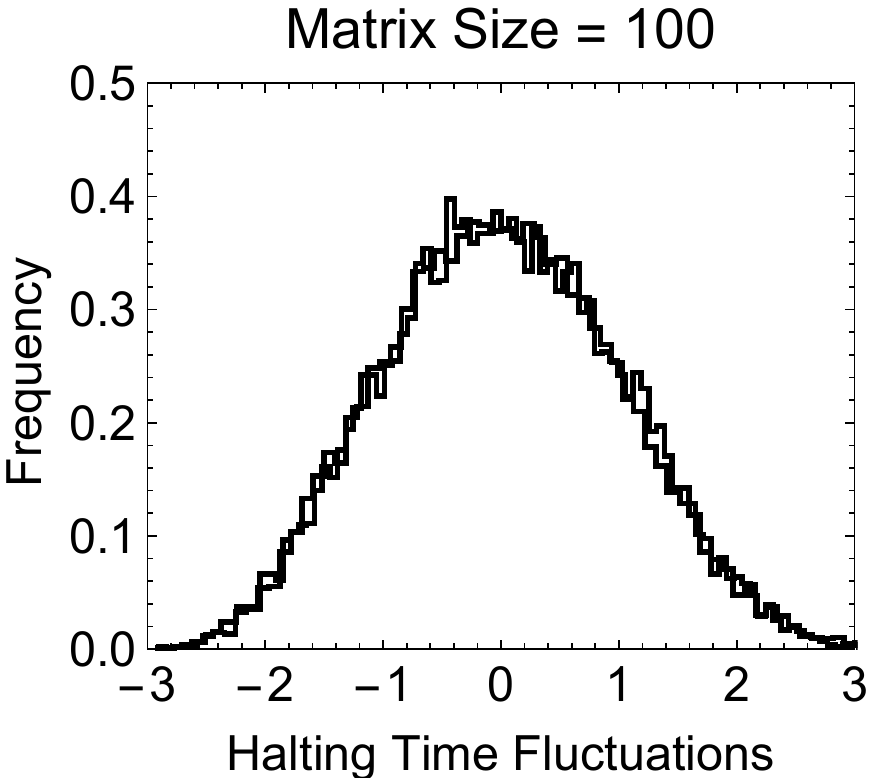}\label{f:PDM-Toda} }
\caption{\label{f:PDM} Universality for $\tilde T$ when (a) $\mathcal A$ is the QR eigenvalue algorithm and when (b) $\mathcal A$ is the Toda algorithm.  Panel (a) displays the overlay of two histograms for $\tilde T$ in the case of QR, one for each of the two ensembles $ \mathcal E = \mathrm{BE}$, consisting of iid mean-zero Bernoulli random variables (see Definition~\ref{def:WE}) and $\mathcal E = \mathrm{GOE}$, consisting of iid mean-zero normal random variables. Here $\epsilon= 10^{-10}$ and $N = 100$.  Panel (b) displays the overlay of two histograms for $\tilde T$ in the case of the Toda algorithm, and again $\mathcal E = \mathrm{BE}$ or $\mathrm{GOE}$. And here $\epsilon= 10^{-8}$ and $N = 100$.}
\end{figure}
 Figure~\ref{f:PDM} displays some of the numerical results from \cite{DiagonalRMT}.  Figure~\ref{f:PDM-QR} displays data for the QR algorithm, which is discrete, and Figure~\ref{f:PDM-Toda} displays data for the Toda algorithm, which is continuous.

Subsequently, in \cite{Deift2014}, the authors raised the question of whether the universality results in \cite{DiagonalRMT} were limited to eigenvalue algorithms for real symmetric matrices, or whether they were present more generally in numerical computation.  And indeed the authors in \cite{Deift2014} found similar universality results for a wide variety of numerical algorithms, including
\begin{itemize}
\item \blue{other algorithms such as the QR algorithm with shifts, the Jacobi eigenvalue algorithm, and also algorithms applied to complex Hermitian ensembles,}
\item the conjugate gradient and GMRES algorithms to solve linear $N\times N$ systems $Hx = b$,
\item an iterative algorithm to solve the Dirichlet problem $\Delta u = 0$ on a random star-shaped region $\Omega \subset \mathbb R^2$ with random boundary data $f$ on $\partial \Omega$, and
\item a genetic algorithm to compute the equilibrium measure for orthogonal polynomials on the line.
\end{itemize}

\blue{All of the above results are numerical. The goal of this paper is to establish universality as a bona fide phenomenon in numerical analysis, and not just an artifact, suggested, however strongly, by certain computations as above.  To this end we seek out and prove universality for an algorithm of interest.  We focus, in particular, on eigenvalue algorithms.} \blue{To analyze eigenvalue algorithms with deflation, one must first analyze $T^{(k)}$ for $1 \leq k \leq N-1$, and then compute the minimum of these $N-1$ dependent variables. The analysis of $T^{(k)}$ for $1 \leq k \leq N-1$ requires very detailed information on the eigenvalues and eigenvectors of random matrices that, at this time, has only been established for $T^{(1)}$ (see below).  Computing the minimum requires knowledge of the distribution of $\hat k$ such that $T(H) = T^{(\hat k)}(H)$, which is an analytical problem that is still untouched.  In Figure~\ref{f:hatk} we show the statistics of $\hat k$ obtained numerically for the Toda algorithm\footnote{A similar histogram for the QR algorithm has an asymmetry that reflects the fact that typically $H$ has an eigenvalue near zero: For QR, a simple argument shows that eigenvalues near zero favor $\hat k = N-1$.}.  In view of the above issues, a comprehensive analysis of the algorithms with deflation, seems, currently, to be out of reach.  In this paper we restrict our attention to the Toda algorithm, and as a \underline{first step} towards understanding $T(H)$ we prove universality for the fluctuations of $T^{(1)}(H)$, the $1$-deflation time for Toda --- see Theorem~\ref{t:main}.   As we see from Proposition~\ref{p:error}, with high probability $X_{11}(T^{(1)}) \sim \lambda_N$, the largest eigenvalue of $X(0) = H$.   In other words, $T^{(1)}(H)$ controls the computation of the largest eigenvalue of $H$ via the Toda algorithm.  Theorem~\ref{t:main} and Proposition~\ref{p:error} are the main results in this paper.  Much of the detailed statistical information on the eigenvalues and eigenvectors of H needed to analyze $T^{(1)}(H)$, was only established in the last 3 or 4 years.}

%
\begin{figure}[tbp]
\centering
  \includegraphics[width=.45\linewidth]{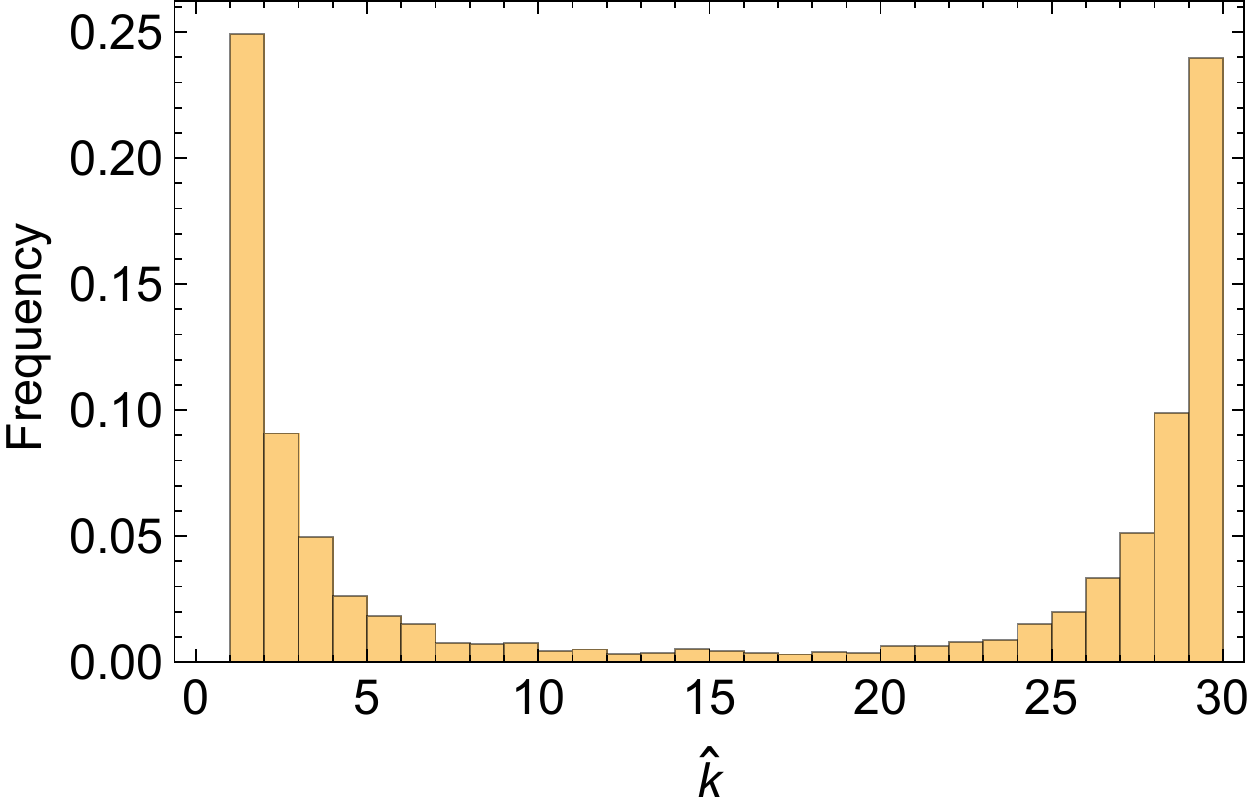}
  \caption{\label{f:hatk}  \blue{The distribution of $\hat k$ for GOE when $N = 30$, $\epsilon = 10^{-8}$ for the Toda algorithm: $\hat k = 1,N-1$ are equally likely.}}
\end{figure}
%



\blue{In this paper we always order the eigenvalues $\lambda_{n} \leq \lambda_{n+1}$, $n = 1, \ldots, N$.}  In Sections~\ref{s:main-results} and \ref{s:rmt} we will describe some of the properties of the Toda algorithm and some results from random matrix theory.  In Section~\ref{s:numerics} we describe some numerical results demonstrating Theorem~\ref{t:main}.  Note that Figure~\ref{f:TodaOne} for $T^{(1)}(H)$ is very different from Figure~\ref{f:PDM-Toda} for $T(H)$.  In Sections~\ref{s:universality} and \ref{s:prob} we will prove universality for $T^{(1)}$ for matrices from generalized Wigner ensembles and also from invariant ensembles.  \blue{See Appendix~\ref{s:ensemble} for a full description of these random matrix ensembles.  The techniques in this paper can also be used to prove universality for the fluctuations in the halting times for other eigenvalue algorithms, in particular, QR (without shifts) --- see Remark~\ref{r:other} below.}

\subsection{Main result}\label{s:main-results}
The Toda algorithm is an example of the generalized continuous eigenvalue algorithm described above. For an $N\times N$ real symmetric or Hermitian matrix $X(t) = (X_{ij}(t))_{i,j = 1}^N$, the Toda equations are given by\footnote{In the real symmetric case $^*$ should be replaced with $^T$.}
\begin{align}\label{e:Toda}
\dot X &= \left[X, B(X)\right], \quad B(X) = X_- - (X_-)^*, \quad X(0) = H = H^*,
\end{align}
where $X_-$ is the strictly lower-triangular part of $X$ and $\left[A,B\right]$ is the standard matrix commutator. It is well known that this flow is isospectral and converges as $t \to \infty$ to a diagonal matrix $X_\infty= \diag(\lambda_N,\ldots,\lambda_1)$; see for example \cite{Deift1985}. As noted above, necessarily, the diagonal elements of $X_\infty$ are the eigenvalues of $H$.  By the \textbf{Toda algorithm} to compute the eigenvalues of a Hermitian matrix $H$ we mean solving \eqref{e:Toda} with $X(0) = H$ until such time $t'$ that the off-diagonal elements in the matrix $X(t)$ are of order $\epsilon$.  The eigenvalues of $X(t')$ then give the eigenvalues of $H$ to $\mathcal O(\epsilon)$.

The history of the Toda algorithm is as follows. The Toda Lattice was introduced by M. Toda in 1967 \cite{Toda1967} and describes the motion of $N$ particles $x_i$, $i=1,\ldots,N$, on the line under the Hamiltonian
\begin{align*}
H_{\mathrm{Toda}}(x,y) = \half \sum_{i=1}^N y_i^2 + \half \sum_{i=1}^N \E^{x_i - x_{i+1}}.
\end{align*}
In 1974, Flaschka \cite{Flaschka1974} (see also \cite{Manakov1975}) showed that Hamilton's equations
\begin{align*}
\dot x = \frac{\partial H_{\mathrm{Toda}}}{\partial y}, \quad \dot y = -\frac{\partial H_{\mathrm{Toda}}}{\partial x},
\end{align*}
can be written in the Lax pair form \eqref{e:Toda} where $X$ is tridiagonal
\begin{align*}
X_{ii} &= -y_i/2, \quad 1 \leq i \leq N, \\
X_{i,i+1} &= X_{i+1,i} = \half \E^{\half(x_i - x_{i+1})}, \quad 1 \leq i \leq N-1,
\end{align*}
and $B(X)$ is the tridiagonal skew-symmetric matrix $B(X) = X_- - (X_-)^T$ as in \eqref{e:Toda}.  As noted above, the flow $t \mapsto X(t)$ is isospectral.  But more is true: The flow is completely integrable in the sense of Liouville with the eigenvalues of $X(0)=H$ providing $N$ Poisson commuting integrals for the flow.  In 1975, Moser showed that the off-diagonal elements $X_{i,i+1}(t)$ converge to zero as $t \to \infty$ \cite{Moser1975}.  Inspired by this result, and also related work of Symes \cite{Symes1982} on the QR algorithm, the authors in \cite{DeiftEigenvalue} suggested that the Toda Lattice be viewed as an eigenvalue algorithm, the Toda algorithm. The Lax equations \eqref{e:Toda} clearly give rise to a global flow not only on tridiagonal matrices but also on general real symmetric matrices. It turns out that in this generality \eqref{e:Toda} is also Hamiltonian \cite{Kostant1979,Adler1978} and, in fact, integrable \cite{Deift1986a}.  From that point on, by the Toda algorithm one means the action of \eqref{e:Toda} on full real symmetric matrices, or by extension, on complex Hermitian matrices.\footnote{The Toda flow \eqref{e:Toda} also generates a completely integrable Hamiltonian system on real (not necessarily symmetric) $N \times N$ matrices, see \cite{Deift1985}. The Toda flow \eqref{e:Toda} on Hermitian matrices was first investigated by Watkins \cite{WatkinsIsospectral}. }

As noted in the Introduction, in this paper we consider running the Toda algorithm only until time $T^{(1)}$, the deflation time with block decomposition $k = 1$ fixed, when the norm of the off-diagonal elements in the first row, and hence the first column, is $\mathcal O(\epsilon)$. Define
\begin{align}\label{e:H-rowsum}
E(t) = \sum_{n=2}^N |X_{1n}(t)|^2,
\end{align}
so that if $E(t) = 0$ then $X_{11}(t)$ is an eigenvalue of $H$.  Thus, with $E(t)$ as in \eqref{e:H-rowsum}, the halting time (or $1$-deflation time) for the Toda algorithm is given by
\begin{align}\label{e:toda-halt}
T^{(1)}(H) = \inf\{t: E(t) \leq \epsilon^2\}.
\end{align}
Note that by the min-max principle if $E(t) < \epsilon^2$ then $|X_{11}(t)-\lambda_j| < \epsilon$ for some eigenvalue $\lambda_j$ of $X(0)$.
%
\begin{figure}[tbp]
\centering
\includegraphics[width=.6\linewidth]{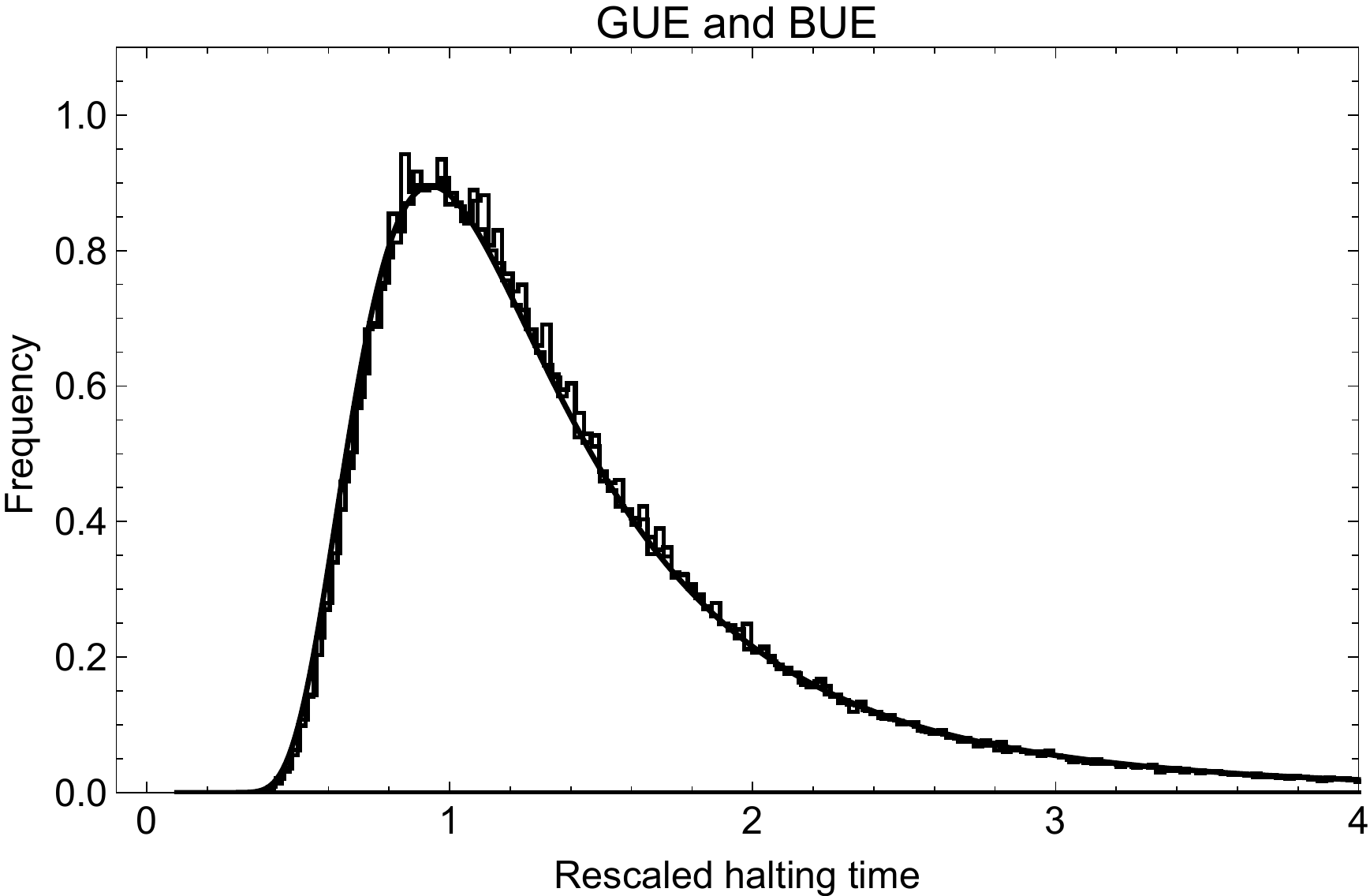}
\caption{\label{f:TodaOne}
The simulated rescaled histogram for $\tilde T^{(1)}$ for both BUE and GUE.  Here $\epsilon = 10^{-14}$ and $N = 500$ with 250,000 samples. The solid curve is the rescaled density $f_2^\mathrm{gap}(t) = \D/\D t F_{2}^\mathrm{gap}(t)$.  The density $f_2^\mathrm{gap}(t) = \frac{1}{\sigma t^2}A^\mathrm{soft}\left(\frac{1}{\sigma t}\right)$, where $A^\mathrm{soft}(s)$ is shown in \cite[Figure 1]{Witte2013}: In order to match the scale in \cite{Witte2013} our choice of distributions (BUE and GUE) we must take $\sigma = 2^{-7/6}$.  This is a numerical demonstration of Theorem~\ref{t:main}.}
\end{figure}
\begin{figure}[tbp]
\centering
\includegraphics[width=.6\linewidth]{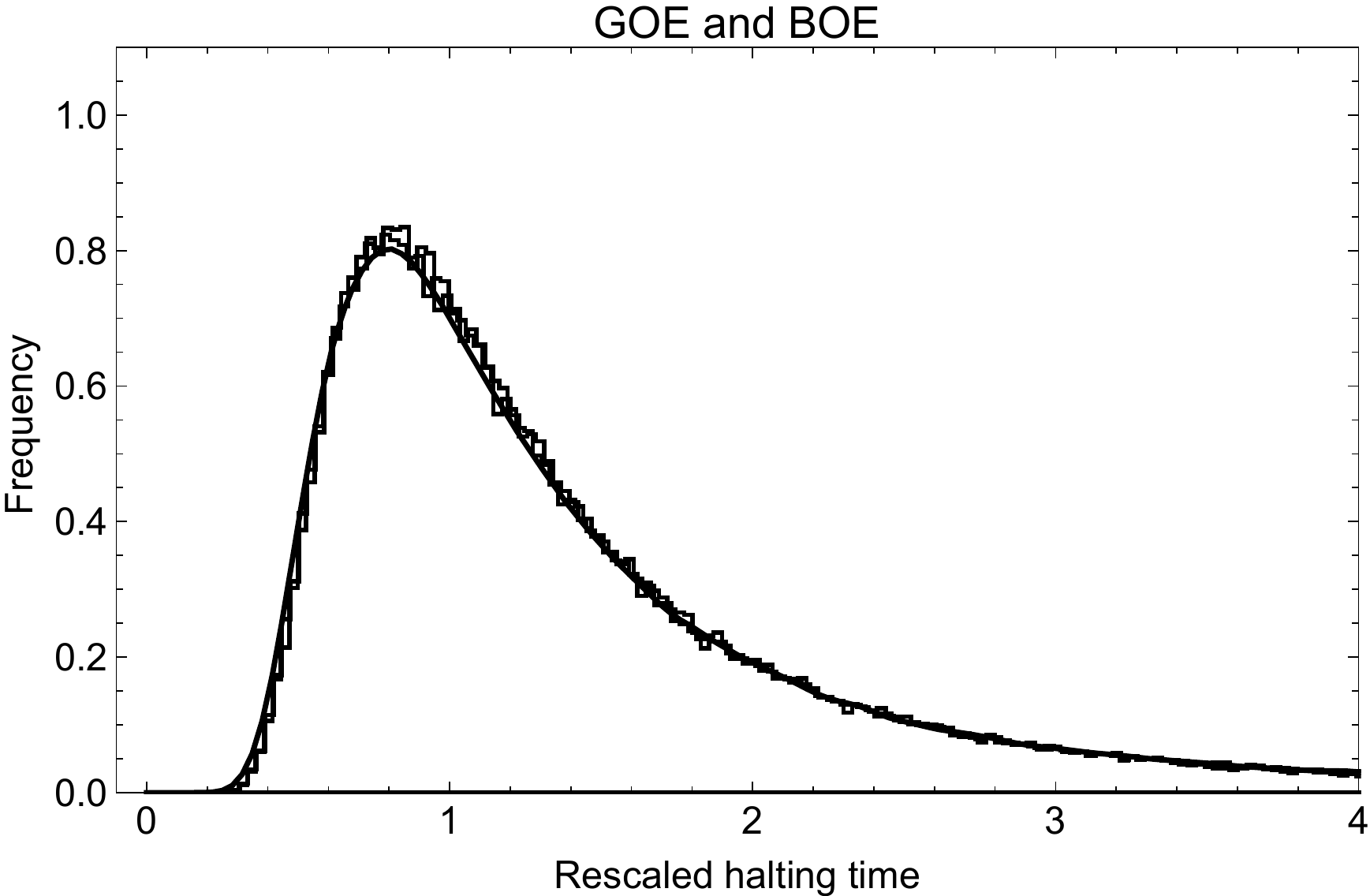}
\caption{\label{f:TodaOneO}
The simulated rescaled histogram for $\tilde T^{(1)}$ for both BOE and GOE demonstrating Theorem~\ref{t:main}. Here $\epsilon = 10^{-14}$ and $N = 500$ with 250,000 samples. The solid curve is an approximation to the density $f_1^\mathrm{gap}(t) = \D/\D t F_{1}^\mathrm{gap}(t)$.  We compute $f_1^\mathrm{gap}(t)$ by smoothing the histogram for $c_V^{-2/3}2^{-2/3}N^{-2/3}(\lambda_{N}-\lambda_{N-1})$ when $N = 800$ with 500,000 samples.}
\end{figure}


\blue{For invariant and generalized Wigner random matrix ensembles there is a constant $c_V$, which depends on the ensemble, such that the following limit exists ($\beta = 1$ for the real symmetric case, $\beta = 2$ for the complex Hermitian case)
\begin{align}\label{e:fb1}
F^\mathrm{gap}_\beta(t) = \lim_{N \to \infty} \mathbb P\left( \frac{1}{c_V^{2/3} 2^{-2/3}N^{2/3}(\lambda_N-\lambda_{N-1})} \leq t \right), \quad t \geq 0.
\end{align}
The precise value of $c_V$ is described in Theorem~\ref{def:EM} and this limit is discussed further in Definition~\ref{def:Fbeta}.  For fixed $\beta$, the limit is independent of the choice of ensemble.}

\blue{\begin{definition}[Scaling region]\label{def:scaling}
Fix $0 < \sigma < 1$.  The {\bf scaling region}\footnote{\blue{From the statement of the theorem, it is reasonable to ask if $5/3$ can be replaced with $2/3$ in the definition of the scaling region.  Also, one should expect different limits for larger values of $\epsilon$ as other eigenvalues will contribute. These questions have yet to be explored. } }  for $(\epsilon,N)$ is given by $\displaystyle \frac{\log{\epsilon^{-1}}}{\log N} \geq 5/3 + \sigma/2$.
\end{definition}}

\blue{Note that for $\epsilon = 10^{-15}$, a relevant value for double-precision arithmetic, $(\epsilon,N)$ is in the scaling region for all values of $N$ less than $10^9$.}

\begin{theorem}[Universality for $T^{(1)}$]\label{t:main}
   Let $0 < \sigma < 1$ be fixed and let $(\epsilon,N)$ be in the scaling region $\frac{\log \epsilon^{-1}}{\log N} \geq \frac{5}{3} + \frac{\sigma}{2}$. Then if $H$ is distributed according to any real ($\beta = 1$) or complex ($\beta = 2$) invariant or Wigner ensemble we have
  \begin{align}\label{e:main-thm}
    \lim_{N\to \infty} \mathbb P\left( \frac{T^{(1)}}{c_V^{2/3} 2^{-2/3}N^{2/3}(\log \epsilon^{-1}-2/3 \log N)} \leq t\right) =  F^{\mathrm{gap}}_\beta(t).
  \end{align}
  Here $c_V$ is the same constant as in \eqref{e:fb1}.
\end{theorem}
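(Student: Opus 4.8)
The plan is to reduce the halting time $T^{(1)}$ to a deterministic function of the spectral data of $H$, and then transfer the known fluctuation statistics of the largest gap $\lambda_N-\lambda_{N-1}$ through that function. The starting point is the explicit integration of the Toda flow: if $H = Q \Lambda Q^*$ with $\Lambda = \diag(\lambda_1,\ldots,\lambda_N)$ and first-row eigenvector entries $u_n = (Q)_{1n}$, then the classical Moser-type formula gives
\begin{align*}
X_{11}(t) = \frac{\sum_{n=1}^N \lambda_n |u_n|^2 \E^{2\lambda_n t}}{\sum_{n=1}^N |u_n|^2 \E^{2\lambda_n t}}, \qquad
E(t) = \sum_{n=2}^N |X_{1n}(t)|^2 = \frac{\sum_{n=1}^N (\lambda_n - X_{11}(t))^2 |u_n|^2 \E^{2\lambda_n t}}{\sum_{n=1}^N |u_n|^2 \E^{2\lambda_n t}}
\end{align*}
(or the equivalent expression as a ratio of sums, up to the normalization in \eqref{e:H-rowsum}). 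For large $t$ the dominant term is $n = N$, and the leading correction comes from $n = N-1$; writing $g = \lambda_N - \lambda_{N-1}$ for the spectral gap at the edge, one gets $E(t) \approx g^2 \frac{|u_{N-1}|^2}{|u_N|^2}\E^{-2gt}(1 + o(1))$. Setting this equal to $\epsilon^2$ and solving yields the heuristic
\begin{align*}
T^{(1)} \approx \frac{1}{2g}\left(\log \epsilon^{-1} + \log\Big(g\tfrac{|u_{N-1}|}{|u_N|}\Big)\right),
\end{align*}
so that $T^{(1)} \cdot g$ is, to leading order, $\tfrac12\log\epsilon^{-1}$ plus lower-order terms.

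The first rigorous step is to establish this asymptotic formula with explicit, probabilistically controlled error terms. Here I would invoke the random matrix inputs alluded to in the excerpt and developed in Sections~\ref{s:rmt} and \ref{s:prob}: rigidity of eigenvalues (so that $\lambda_N$ and $\lambda_{N-1}$ sit at the soft edge at scale $N^{-2/3}$ and the gap $g$ is of order $N^{-2/3}$ with overwhelming probability), edge universality (so the rescaled gap $c_V^{2/3}2^{-2/3}N^{2/3} g$ has a limiting law whose reciprocal is exactly $F^\mathrm{gap}_\beta$, by \eqref{e:fb1}), and delocalization of the top eigenvectors (so that $|u_N|, |u_{N-1}|$ are each $N^{-1/2+o(1)}$, hence $\log(|u_{N-1}|/|u_N|) = O(\log N)$ with high probability, and more importantly the sub-leading eigenvector terms $|u_n|^2$ for $n < N-1$ are uniformly small). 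The key quantitative claim is that, on an event of probability $1 - o(1)$,
\begin{align*}
\left| 2g\,T^{(1)} - \log\epsilon^{-1} \right| \leq C\log N
\end{align*}
for some constant $C$. Combined with $g \asymp N^{-2/3}$, this forces $T^{(1)} \asymp N^{2/3}\log\epsilon^{-1}$ and, crucially, shows that after dividing by $\log\epsilon^{-1} - \tfrac23\log N$ the $O(\log N)$ errors are negligible precisely when $\log\epsilon^{-1}/\log N$ stays bounded below by $5/3 + \sigma/2$ — this is where the scaling region enters. One has to be careful that the $\tfrac23\log N$ appearing in the denominator of \eqref{e:main-thm} is exactly the deterministic part of $\log(g)$ after centering (since $g \sim 2^{-2/3}c_V^{-2/3}N^{-2/3}\times(\text{order-one})$), so that $\log\epsilon^{-1} - \tfrac23\log N$ is the right normalization to cancel the $\log g$ term and leave $T^{(1)}/(\text{denominator})$ asymptotically equal to $1/(c_V^{2/3}2^{-2/3}N^{2/3} g)$.

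The final step is then short: by the previous step,
\begin{align*}
\frac{T^{(1)}}{c_V^{2/3}2^{-2/3}N^{2/3}(\log\epsilon^{-1} - \tfrac23\log N)} = \frac{1}{c_V^{2/3}2^{-2/3}N^{2/3}\, g} + o_{\mathbb P}(1),
\end{align*}
and the right-hand side converges in distribution to $F^\mathrm{gap}_\beta$ by \eqref{e:fb1} (which is exactly the statement that the rescaled edge gap has a universal limiting law, independent of the ensemble within each symmetry class). A continuous-mapping / Slutsky argument then gives \eqref{e:main-thm}. I expect the main obstacle to be the second step: controlling the error terms uniformly requires simultaneous control of (i) the gap $g$ from below, (ii) the eigenvector overlaps $|u_N|, |u_{N-1}|$ from below and all others from above, and (iii) the contribution of the bulk/next eigenvalues $\lambda_{N-2}, \lambda_{N-3},\ldots$ to $E(t)$ at the relevant time scale $t \asymp N^{2/3}\log\epsilon^{-1}$ — one must show these do not spoil the two-term expansion. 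The interplay between the size of $\epsilon$ (through the scaling region) and the $N^{-2/3}$ edge scale is the delicate quantitative heart of the argument; everything else is either the classical Toda integration formula or a direct citation of rigidity, edge universality, and delocalization.
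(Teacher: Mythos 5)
Your plan follows the paper's strategy: integrate the Toda flow explicitly via Moser's formula, reduce $E(t)$ to a function of the eigenvalues and the moduli $\beta_j = |U_{1j}(0)|$ of the first eigenvector components, identify the top gap $g = \lambda_N-\lambda_{N-1}$ as the dominant rate, and transport the limiting edge-gap law through that reduction using rigidity, delocalization and edge universality. Two issues in the execution, though. First, a factor of two: solving $g^2\nu_{N-1}\E^{-2gt}=\epsilon^2$ gives $t = g^{-1}\bigl(\log\epsilon^{-1}+\log(g\sqrt{\nu_{N-1}})\bigr)$, not $(2g)^{-1}(\cdots)$; this is exactly the paper's $T^*$ from \eqref{e:t-star}, written in terms of $g=\delta_{N-1}/2$ and $\nu_{N-1}=\beta_{N-1}^2/\beta_N^2$. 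Second, and more seriously, the stated ``key quantitative claim'' $|2gT^{(1)}-\log\epsilon^{-1}|\le C\log N$ (even after correcting $2g$ to $g$) is too coarse to close the argument: dividing by $\log\epsilon^{-1}-\tfrac23\log N$, which is itself of order $\log N$ in the scaling region, leaves an $O(1)$ --- not $o_{\mathbb P}(1)$ --- discrepancy, so the normalized halting time does not visibly track $(c_V^{2/3}2^{-2/3}N^{2/3}g)^{-1}$. What is needed is to resolve the $O(\log N)$ error as its deterministic part $-\tfrac23\log N$, which the normalization is built to cancel exactly, plus a remainder $\log(N^{2/3}g)+\tfrac12\log\nu_{N-1}$ that is $O_{\mathbb P}(1)$ by edge universality and eigenvector delocalization; your later comment about ``the deterministic part of $\log g$ after centering'' shows you see this, but the claim as written does not record it. The paper makes this split precise by introducing the intermediaries $T^*$ in \eqref{e:t-star} and $\hat T$ in \eqref{e:t-hat}, and proving $|T^{(1)}-T^*|/N^{2/3}\to0$ (Lemma~\ref{l:tts}) and $|T^*-\hat T|/(N^{2/3}\log N)\to0$ (Lemma~\ref{l:tsth}) in probability.

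One further concrete ingredient you gesture at (``bulk eigenvalues don't spoil the two-term expansion'') but do not pin down: controlling $|E_0(T^*)-\epsilon^2|$ requires a lower bound on $\lambda_{N-1}-\lambda_{N-2}$ relative to $g$ --- the paper's Condition~\ref{cond:gap}, used in Lemma~\ref{l:H0Tstar} --- so that the $n=N-2$ term is exponentially subleading at time $T^*$. Without separating $\lambda_{N-2}$ from $\lambda_{N-1}$ the one-term expansion of $E_0(t)$ can degrade, and the estimate on $T^{(1)}-T^*$ via the mean value theorem (with the lower bound on $|E_0'|$ from Lemma~\ref{l:DH0}) would not close. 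This condition costs probability that only vanishes as $p\downarrow0$, which is why the proof of Lemma~\ref{l:tts} takes a double limit; your outline, which sends $N\to\infty$ directly, would need the same device.
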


\blue{

\begin{example} Consider the case of real symmetric, $2 \times 2$ matrices. For $X(0) = H$, it follows that as $t \to \infty$, $X_{11}(t) \to \lambda_2$, the largest eigenvalue, while $X_{22}(t) \to \lambda_1$, the second-largest eigenvalue.  And so, one should expect $T^{(1)}$ to be larger for
\begin{align*}
X(0) = H_+ := \begin{bmatrix} -1 & \delta \\ \delta &1\end{bmatrix} \quad \text{ than for }\quad X(0) = H_- := \begin{bmatrix} 1 & \delta \\ \delta & -1 \end{bmatrix},
\end{align*}
despite the fact that these matrices have the same eigenvalues.  Said differently, it is surprising that the fluctuations of $T^{(1)}$ in Theorem~\ref{t:main} depend only on the eigenvalues and are independent of the eigenvectors of $H$. Let $U  = (U_{ij})_{1\leq i,j \leq 2}$ be the matrix of normalized eigenvectors of $X(0)$.   It then follows from the calculations in Section~\ref{s:universality} that
\begin{align*}
|X_{12}(t)|^2 &= (\lambda_2-\lambda_1)^2\frac{|U_{11}(0)|^2\E^{2 \lambda_1 t}}{|U_{11}(0)|^2 \E^{2 \lambda_1 t} + |U_{12}(0)|^2 \E^{2 \lambda_2 t}}.
\end{align*}
It is then clear that
\begin{align*}
|X_{12}(t)|^2 \sim (\lambda_2-\lambda_1)^2 \frac{|U_{11}(0)|^2}{ |U_{12}(0)|^2}\E^{-2 (\lambda_2-\lambda_1) t}, \text{ as } ~ t \to \infty.
\end{align*}
First, one should note that this, roughly speaking, explains the appearance of $\lambda_N - \lambda_{N-1}$ in the definition of the universal limit $F_\beta^{\mathrm{gap}}(t)$.  Second, a simple calculation shows that as $\delta \downarrow 0$,  $|U_{12}(0)| \sim \delta$ for $H_+$ while $|U_{12}(0)| \sim 1$ for $H_-$, explaining why $T^{(1)}(H_+) \geq T^{(1)}(H_-)$.  However, the matrices $H_+$ and $H_-$ are not ``typical''.  With high probability, the eigenvectors of random matrices in the ensembles under consideration are delocalized, so that $U_{1j}$, $j =1,\ldots,N$ are all of the same order.  For general $N$, we then have $ \sum_{k = 2}^N |X_{1k}|^2 \asymp (\lambda_N-\lambda_{N-1})^2 \E^{-2(\lambda_{N-1}-\lambda_N)t}$ and the dependence on the eigenvectors is effectively removed as $\epsilon \downarrow 0$.

\end{example}

}

To see that the algorithm computes the top eigenvalue, to an accuracy beyond its fluctuations, we have the following proposition which is a restatement of Proposition~\ref{p:error-alpha} that shows our error is $\bigo(\epsilon)$ with high probability.
\begin{proposition}[Computing the largest eigenvalue]\label{p:error}
   Let $(\epsilon, N)$ be in the scaling region. Then if $H$ is distributed according to any real or complex invariant or Wigner ensemble
  \begin{align*}
  \epsilon^{-1} |\lambda_N - X_{11}(T^{(1)})|
  \end{align*}
  converges to zero in probability as $N \to \infty$.  Furthermore, both
  \begin{align*}
  \epsilon^{-1} |b_V - X_{11}(T^{(1)})|, \quad \epsilon^{-1} |\lambda_j - X_{11}(T^{(1)})|
  \end{align*}
  converge to $\infty$ in probability for any $j = j(N) < N$ as $N \to \infty$, where $b_V$ is the supremum of the support of the equilibrium measure for the ensemble.
\end{proposition}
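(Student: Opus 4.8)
The plan is to read off two elementary deterministic inequalities from the explicit solution of the Toda flow, and then combine them with three high-probability inputs from random matrix theory. Diagonalize $H = U\Lambda U^*$ with $\Lambda = \diag(\lambda_1,\dots,\lambda_N)$, $\lambda_1\le\cdots\le\lambda_N$, and $U$ unitary. Since the Toda flow satisfies $X(t) = Q(t)^{-1}HQ(t)$ with $\E^{tH} = Q(t)R(t)$ the $QR$-factorization (see Section~\ref{s:universality}), and $q_1(t):=Q(t)e_1$ is the normalized first column of $\E^{tH}$, one gets $X_{11}(t) = q_1^*Hq_1$ and $\sum_{n=1}^N|X_{1n}(t)|^2 = (X(t)^2)_{11} = q_1^*H^2q_1$, which give
\begin{align*}
p_j(t):=\frac{|U_{1j}|^2\E^{2t\lambda_j}}{\sum_k|U_{1k}|^2\E^{2t\lambda_k}},\qquad X_{11}(t)=\sum_j\lambda_jp_j(t),\qquad E(t)=\sum_j\bigl(\lambda_j-X_{11}(t)\bigr)^2p_j(t),
\end{align*}
i.e.\ $X_{11}(t)$ and $E(t)$ are the mean and the variance of $\lambda$ under $p(t)$. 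Set $g:=\lambda_N-\lambda_{N-1}>0$ and $y(t):=\lambda_N-X_{11}(t)=\sum_{j<N}(\lambda_N-\lambda_j)p_j(t)\ge0$; one also checks $\dot X_{11}(t)=2E(t)\ge0$, so $y(t)$ decreases to $0$.

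Next I record the two inequalities. First, since $\lambda_k\le\lambda_N$ the denominator of $p_N(t)$ is at most $\E^{2t\lambda_N}$, so $p_N(t)\ge|U_{1N}|^2$ and therefore $E(t)\ge y(t)^2p_N(t)\ge y(t)^2|U_{1N}|^2$ for all $t\ge0$. Second, using $\lambda_N-\lambda_j\ge g$ for $j<N$ together with the bias--variance identity $\sum_j(\lambda_N-\lambda_j)^2p_j(t)=E(t)+y(t)^2$,
\begin{align*}
E(t)+y(t)^2=\sum_{j<N}(\lambda_N-\lambda_j)^2p_j(t)\ge g\sum_{j<N}(\lambda_N-\lambda_j)p_j(t)=g\,y(t),
\end{align*}
so $y(t)\le2E(t)/g$ whenever $y(t)\le g/2$. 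As $E$ is continuous with $E(t)\to0$, the infimum $T^{(1)}=\inf\{t:E(t)\le\epsilon^2\}$ is attained with $E(T^{(1)})\le\epsilon^2$, and the first inequality then gives the crude bound $y(T^{(1)})\le\epsilon/|U_{1N}|$.

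The random matrix inputs are: (i) edge universality for consecutive gaps --- $N^{2/3}g$ converges in distribution to an a.s.\ positive limit, so for every $\eta>0$ there is $\delta>0$ with $\mathbb P(g<\delta N^{-2/3})<\eta$ for all large $N$; (ii) eigenvector delocalization together with the limiting law of $\sqrt N\,U_{1N}$, so that (after shrinking $\delta$) $\mathbb P(|U_{1N}|^2<\delta/N)<\eta$; (iii) the Tracy--Widom law for $N^{2/3}(\lambda_N-b_V)$, so that $\mathbb P(|\lambda_N-b_V|<\delta N^{-2/3})<\eta$ for all large $N$. On the intersection of these events (probability $\ge1-3\eta$), the scaling-region bound $\epsilon\le N^{-5/3-\sigma/2}$ forces $\epsilon/|U_{1N}|\le\epsilon\sqrt{N/\delta}\le\tfrac12\delta N^{-2/3}\le g/2$ once $N$ is large (depending on $\delta$), so the crude bound upgrades via the second inequality to $y(T^{(1)})\le2E(T^{(1)})/g\le2\epsilon^2/g\le2\epsilon^2N^{2/3}/\delta$. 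Hence $\epsilon^{-1}|\lambda_N-X_{11}(T^{(1)})|=\epsilon^{-1}y(T^{(1)})\le2\epsilon N^{2/3}/\delta\le2N^{-1-\sigma/2}/\delta\to0$, and since $\eta$ was arbitrary this is convergence to $0$ in probability. For the remaining two claims: on the same event $y(T^{(1)})=o(N^{-2/3})$, so $|b_V-X_{11}(T^{(1)})|\ge|b_V-\lambda_N|-y(T^{(1)})\ge\tfrac12\delta N^{-2/3}$, and for any $j=j(N)<N$ (whence $\lambda_j\le\lambda_{N-1}$), $|\lambda_j-X_{11}(T^{(1)})|\ge g-y(T^{(1)})\ge\tfrac12\delta N^{-2/3}$; multiplying by $\epsilon^{-1}\ge N^{5/3+\sigma/2}$ shows that both quantities tend to $\infty$ in probability.

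I expect the main obstacle to be input (ii): one needs that $|U_{1N}|^2$ is not much smaller than $1/N$ with probability close to $1$, uniformly in $N$. If $|U_{1N}|$ could be anomalously small, the weight $p_N(t)$ might stay negligible until long after $E(t)$ has first dropped below $\epsilon^2$ with $p(t)$ still concentrated near $\lambda_{N-1}$, and then $X_{11}(T^{(1)})$ would be near $\lambda_{N-1}$ rather than $\lambda_N$ --- so the statement genuinely hinges on eigenvector delocalization and universality (a recent result for generalized Wigner ensembles; classical, via Haar distribution of the eigenvectors, for invariant ensembles). Inputs (i) and (iii) are equally necessary but are by now standard edge statements, and the two Toda inequalities together with the scaling-region bookkeeping are routine.
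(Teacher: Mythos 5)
Your proof is correct, and it takes a genuinely different and more elementary route than the paper's. The paper deduces Proposition~\ref{p:error} from Lemma~\ref{l:error}, which is a deterministic bound $\epsilon^{-1}|\lambda_N - X_{11}(T^{(1)})| \leq CN^{-1}$ valid on the event $R_{N,s}$ (Condition~\ref{cond:rigidity}); proving that lemma requires the full bookkeeping apparatus of Lemma~\ref{l:estimate} (splitting the exponential sums over $I_c$ and $I_c^c$ with the parameters $\alpha,\sigma,s,c$) together with the two-sided localization of $T^{(1)}$ in Lemma~\ref{l:toda-T}. You instead bypass all of that with two one-line deterministic observations tied to the probabilistic interpretation $X_{11}(t)=\mathbb E_{p(t)}[\lambda]$, $E(t)=\mathrm{Var}_{p(t)}(\lambda)$: the crude bound $E(t) \geq y(t)^2\,|U_{1N}|^2$ coming from keeping only the $j=N$ term, and the bias--variance identity $E(t)+y(t)^2 = \sum_{j<N}(\lambda_N-\lambda_j)^2 p_j(t) \geq g\,y(t)$ yielding the sharp $y(t)\leq 2E(t)/g$ once $y(t)\leq g/2$. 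The bootstrap — crude bound $\Rightarrow y(T^{(1)})\leq g/2$ in the scaling region $\Rightarrow$ sharp bound $y(T^{(1)})\leq 2\epsilon^2/g$ — is clean, and the three RMT inputs you invoke (gap $N^{2/3}g$ bounded below, $N^{1/2}|U_{1N}|$ bounded below, $N^{2/3}|b_V-\lambda_N|$ bounded below) are exactly the substance of Theorem~\ref{t:gap-limit} in the paper; you correctly identify that the eigenvector lower bound is the one non-classical ingredient. What your approach buys is independence from the fine estimates of Section~\ref{s:universality}: if one wanted \emph{only} Proposition~\ref{p:error} (and not Theorem~\ref{t:main}), your route is essentially self-contained modulo the edge statistics. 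What the paper's route buys is that Lemma~\ref{l:error} comes nearly for free once Lemma~\ref{l:estimate} and Lemma~\ref{l:toda-T} have been established for the main theorem, so no new machinery is needed. One could even argue your quantitative output $\epsilon^{-1}y(T^{(1)}) \lesssim \epsilon N^{2/3}/g$ is sharper in spirit (it is $O(\epsilon N^{4/3})$ in distribution, versus the paper's $O(N^{-1})$ — the latter is stronger deterministically on $R_{N,s}$ but only because $R_{N,s}$ already builds in the $N^{\pm s}$ slack).
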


The relation of this theorem to two-component universality, \blue{as discussed in \cite{Deift2014}}, is the following. Let $\xi = \xi_\beta$ be the random variable with distribution $F^{\mathrm{gap}}_\beta(t)$, $\beta = 1$ or $2$. For $\beta =2$ IEs one can prove that\footnote{We can also prove \eqref{e:L1} for $\beta =1$ IEs.  These proofs of these facts require an extension of the level repulsion estimates in \cite[Theorem 3.2]{Bourgade2014} to the case `$K = 1$'. When $\beta =2$, again with this extension of \cite[Theorem 3.2]{Bourgade2014}  to the case `$K=1$', we can prove that $\kappa = \mathrm{Var}(\xi)$.  This extension is known to be true \cite{BourgadeConv}. The calculations in Table~\ref{t:table} below are consistent with \eqref{e:L1} and \eqref{e:L2} (even for WEs) and lead us to believe that \eqref{e:L2} also holds for $\beta =1$.  Note that for $\beta = 2$, $\mathbb E[\xi^2] < \infty$, but it is believed that $\mathbb E[\xi^2] = \infty$ for $\beta = 1$, see \cite{Perret2013}.  In other words, we face the unusual situation where the variance seems to converge, but not to the variance of the limiting distribution.
}
\begin{align}\label{e:L1}
\mathbb E[T^{(1)}] &= c_V^{2/3}2^{-2/3}N^{2/3}(\log \epsilon^{-1}-2/3 \log N) \mathbb E[\xi](1 + o(1)),\\
\sqrt{\mathrm{Var}(T^{(1)})} &= \kappa c_V^{2/3}2^{-2/3}  N^{2/3}(\log \epsilon^{-1}-2/3 \log N)(1 + o(1)),\label{e:L2} \quad \kappa > 0.
\end{align}
By the Law of Large Numbers, if the number of samples is sufficiently large for any fixed, but sufficiently large $N$, we can restate the result as
\begin{align*}
\mathbb P \left( \frac{T^{(1)}-\langle T^{(1)} \rangle}{\sigma_{T^{(1)}}} \leq t \right) \approx F^{\mathrm{gap}}_\beta( \kappa t + \mathbb E[\xi]).
\end{align*}
This is a universality theorem for the halting time $T^{(1)}$ as the limiting distribution does not depend on the distribution of the individual entries of the matrix ensemble, just whether it is real or complex. 

\begin{remark}
  If one constructs matrices $H = U \Lambda U^*$, $\Lambda = \diag(\lambda_N, \lambda_{N-1}, \ldots, \lambda_1)$ where the joint distribution of $\lambda_1 \leq \lambda_2 \leq \cdots \leq\lambda_N$ is given by
  \begin{align*}
    \propto \prod_{j=1}^N\E^{-N\frac{\beta}{2} V(\lambda_j)} \prod_{j< n} |\lambda_j-\lambda_n|^\beta,
  \end{align*}
  and $U$ is distributed (independently) according to Haar measure on either the orthogonal or unitary group then Theorem~\ref{t:main} holds for any $\beta \geq 1$.  Here $V$ should satisfy the hypotheses in Definition~\ref{def:IE}.
\end{remark}

\begin{remark}\label{r:other}
  To compute the largest eigenvalue of $H$, one can alternatively consider the flow
  \begin{align*}
    \dot X(t) = HX(t), \quad X(0) = [1,0,\ldots,0]^T.
  \end{align*}
  It follows that
  \begin{align*}
    \log \frac{\|X(t+1)\|}{\|X(t)\|} \to \lambda_N, \quad t \to \infty.
  \end{align*}
  So, define $T_{\mathrm{ODE}}(H) = \inf\left\{t: \left|\log \frac{\|X(t+1)\|}{\|X(t)\|} - \lambda_N  \right| \leq \epsilon \right\}$.  Using the proof technique we present here, one can show that Theorem~\ref{t:main} also holds with $T^{(1)}$ replaced with $T_{\mathrm{ODE}}$. \blue{The same is true for the power method, the inverse power method, and the QR algorithm without shifts on positive-definite random matrices (see \cite{Deift2017}).}
\end{remark}

\subsection{A numerical demonstration}\label{s:numerics}
We can demonstrate Theorem~\ref{t:main} numerically using the following WEs defined by letting $X_{ij}$ for $i \leq j$ be iid with distributions:
\begin{enumerate}
\item[\underline{GUE}] Mean zero standard complex normal.
\item[\underline{BUE}] $\xi + \I \eta$ where $\xi$ and $\eta$ are each the sum of independent mean zero Bernoulli random variables, \emph{i.e.} binomial random variables.
\item[\underline{GOE}] Mean zero standard (real) normal.
\item[\underline{BOE}] Mean zero Bernoulli random variable
\end{enumerate}
In Figure~\ref{f:TodaOne}, for $\beta = 2$,  we show how the histogram of $T^{(1)}$ (more precisely, $\tilde T^{(1)}$, see \eqref{e:ttilde} below), after rescaling, matches the density $\D/\D t F_2^{\mathrm{gap}}(t)$ which was computed numerically\footnote{Technically, the distribution of the first gap was computed , and then $F_2^\mathrm{gap}$ can be computed by a change of variables.  We thank Folkmar Bornemann for the data to plot $F_2^\mathrm{gap}$.} in \cite{Witte2013}. In Figure~\ref{f:TodaOneO}, for $\beta =1$, we show the histogram for $T^{(1)}$ (again, $\tilde T^{(1)}$), after rescaling, matches the density $\D/\D t F_1^{\mathrm{gap}}(t)$. To the best of our knowledge, a computationally viable formula for $\D/\D t F_1^{\mathrm{gap}}(t)$, analogous to $\D/\D t F_2^{\mathrm{gap}}(t)$ in \cite{Witte2013}, is not yet known and so we estimate the density $\D/\D t F_1^{\mathrm{gap}}(t)$ using Monte Carlo simulations with $N$ large. For convenience, we choose the variance for the above ensembles so that $[a_V,b_V]=[-2\sqrt{2},2\sqrt{2}]$ which, in turn, implies $c_{V} = 2^{-3/2}$.

It is clear from the proof of Theorem~\ref{t:main} that the convergence of the left-hand side in \eqref{e:main-thm} to $F_\beta^{\mathrm{gap}}$ is slow.  In fact, we expect a rate proportional to $1/\log N$.  This means that in order to demonstrate \eqref{e:main-thm} numerically with convincing accuracy one would have to consider very large values of $N$.  In order to display the convergence in \eqref{e:main-thm} for more reasonable values of $N$, we observe, using a simple calculation, that for any fixed $\gamma \neq 0$ the limiting distribution of
\begin{align}\label{e:ttilde}
 \tilde T^{(1)} = \tilde T_\gamma^{(1)} :=\frac{T^{(1)}}{c_V^{2/3} 2^{-2/3}N^{2/3}(\log \epsilon^{-1}-2/3 \log N + \gamma)}
\end{align}
as $N \to \infty$ is the same as for $\gamma = 0$.  A ``good" choice for $\gamma$ is obtained in the following way.  To analyze the $T^{(1)}$ in Sections~\ref{s:universality} and \ref{s:prob} below we utilize two approximations to $T^{(1)}$, viz. $T^*$ in \eqref{e:t-star} and $\hat T$ in \eqref{e:t-hat}:
\begin{align*}
T^{(1)} = \hat T + (T^{(1)} - T^*) + (T^* - \hat T).
\end{align*}
The parameter $\gamma$ can be inserted into the calculation by replacing $\hat T$ with  $\hat T_\gamma$
\begin{align*}
	\hat T \to \hat T_\gamma := \frac{(\alpha - 4/3) \log N + 2\gamma}{\delta_{N-1}}
\end{align*}
where $\gamma$ is chosen to make
\begin{align}\label{e:gamma-diff}
T^* - \hat T_\gamma = \frac{\log N^{2/3}(\lambda_N-\lambda_{N-1}) + \half \log \nu_{N-1} -\gamma}{\lambda_N - \lambda_{N-1}},
\end{align}
as small as possible.  Here $\nu_{N-1}$ and $\delta_{N-1}$ are defined at the beginning of Section~\ref{s:tech}.  Replacing $\log N^{2/3}(\lambda_N - \lambda_{N-1})$ and $\log \nu_N$ in \eqref{e:gamma-diff} with the expectation of their respective limiting distributions as $N \to \infty$ (see Theorem~\ref{t:gap-limit}: note that $\nu_{N-1}$ is asymptotically distributed as $\zeta^2$ where $\zeta$ is Cauchy distributed) we choose $\gamma_2= -\mathbb E(\log (c_V^{2/3}2^{-5/3}\xi_2)) + \half \mathbb E[ \log |\zeta|] \approx 0.883$ when $\beta = 2$ and $\gamma_1= -\mathbb E(\log (c_V^{2/3}2^{-5/3}\xi_1)) + \half \mathbb E[ \log |\zeta|] \approx 0.89$ when $\beta = 1$. Figures~\ref{f:TodaOne} and \ref{f:TodaOneO} are plotted using $\gamma_1$ and $\gamma_2$, respectively.


We can also examine the growth of the mean and standard deviation.  We see from Table~\ref{t:table} using a million samples and $\epsilon = 10^{-5}$, that the sample standard deviation is on the same order as the sample mean:
\begin{align}\label{e:order}
\sigma_{T^{(1)}} \sim \langle T^{(1)} \rangle \sim N^{2/3}(\log \epsilon^{-1}-2/3 \log N).
\end{align}
\begin{table}[ht]
\begin{align*}
\begin{array}{ccccccc}
N & 50 & 100 & 150 & 200 & 250 & 300 \\
\log \epsilon^{-1}/\log N-5/3 &1.28 & 0.833 & 0.631 & 0.506 & 0.418 & 0.352 \\
  \langle T^{(1)}\rangle\sigma_{T^{(1)}}^{-1} \text{ for GUE} &1.58 & 1.62 & 1.59 & 1.63 & 1.6 & 1.58 \\
 \langle T^{(1)}\rangle\sigma_{T^{(1)}}^{-1} \text{ for BUE} & 1.6 & 1.57 & 1.6 & 1.62 & 1.62 & 1.58 \\
 \langle T^{(1)}\rangle\sigma_{T^{(1)}}^{-1} \text{ for GOE} & 0.506 & 0.701 & 0.612 & 0.475 & 0.705 & 0.619 \\
  \langle T^{(1)}\rangle\sigma_{T^{(1)}}^{-1} \text{ for BOE} &0.717 & 0.649 & 0.663 & 0.747 & 0.63 & 0.708 \\
\end{array}
\end{align*}
\caption{\label{t:table} A numerical demonstration of \eqref{e:order}.  The second row of the table confirms that $(\epsilon,N)$ is in the scaling region for, say, $\sigma = 1/2$.  The last four rows demonstrate that the ratio of the sample mean to the sample standard deviation is order one. }
\end{table}

\begin{remark}
The ideas that allow us to establish \eqref{e:L1} for IEs requires the convergence of
\begin{align}\label{e:overgap}
\mathbb E \left[\frac{1}{N^{2/3}(\lambda_N - \lambda_{N-1})} \right].
\end{align}
For BUE, \eqref{e:overgap} must be infinite for all $N$ as there is a non-zero probability that the top two eigenvalues coincide owing to the fact that the matrix entries are discrete random variables.  Nevertheless, the sample mean and sample standard deviation of $T^{(1)}$ are observed to converge, after rescaling.  It is an interesting open problem to show that  convergence in \eqref{e:L1} still holds in this case of discrete WEs even though \eqref{e:overgap} is infinite.  Specifically, the convergence in the definition of $\xi$ (Definition~\ref{def:Fbeta}) for discrete WEs cannot take place in expectation. Hence $T^{(1)}$ acts as a molified version of the inverse of the top gap --- it is always finite.
\end{remark}

\subsection{Estimates from random matrix theory}\label{s:rmt}

We now introduce the results from random matrix theory that are needed to prove Theorem~\ref{t:main} and Proposition~\ref{p:error}.  Let $H$ be an $N \times N$ Hermitian (or just real symmetric) matrix with eigenvalues $\lambda_1 \leq \lambda_2 \leq \cdots \leq\lambda_N$ and let $\beta_1, \beta_2, \ldots, \beta_{N}$ denote the absolute value of the first components of the normalized eigenvectors.  \blue{We assume the entries of $H$ are distributed according to an invariant or generalized Wigner ensemble (see Appendix~\ref{s:ensemble}).}   Define the averaged empirical spectral measure
\begin{align*}
\mu_N(z) = \mathbb E \frac{1}{N} \sum_{i=1}^N \delta(\lambda_i-z),
\end{align*}
where the expectation is taken with respect to the given ensemble.

\begin{theorem}[Equilibrium measure, \cite{Bourgade2014}]\label{def:EM}
For any WE or IE the measure $\mu_N$ converges weakly to a measure $\mu$, called the equilibrium measure, which has support on a single interval $[a_V,b_V]$ and, for suitable constants $C_\mu$ and $c_V$, has a density $\rho$ that satisfies $\rho(x) \leq C_\mu \sqrt{b_V-x}\chi_{(-\infty,b_V]}(x)$ and $\rho(x) = \frac{2^{3/4}c_V}{\pi} \sqrt{b_V-x}(1 + \bigo(b_V-x))$ as $x \to b_V$.
\end{theorem}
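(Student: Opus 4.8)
The statement bundles together two classical facts, one for each of the two families of ensembles, and I would treat them separately; in both cases the limiting object is the logarithmic-potential equilibrium measure associated with the ensemble.

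\emph{Wigner ensembles.}  Here the content is Wigner's semicircle law.  With the normalization fixed in Appendix~\ref{s:ensemble}, the expected normalized moments $\frac1N\mathbb E\,\tr H^k$ converge to the moments of a semicircle distribution, so $\mu_N$ converges weakly to the semicircle measure on a symmetric interval $[a_V,b_V]=[-b_V,b_V]$ with density
\[
\rho(x)=\frac{8}{\pi(b_V-a_V)^2}\sqrt{(b_V-x)(x-a_V)}\quad\text{on }[a_V,b_V],
\]
and $\rho\equiv0$ off $[a_V,b_V]$.  This gives single-interval support at once; a uniform bound $\rho(x)\le C_\mu\sqrt{b_V-x}$ holds with $C_\mu=\frac{8}{\pi}(b_V-a_V)^{-3/2}$; and expanding $\sqrt{(b_V-x)(x-a_V)}=\sqrt{b_V-a_V}\,\sqrt{b_V-x}\,(1+\bigo(b_V-x))$ near $b_V$ yields the edge expansion with $c_V=2^{9/4}(b_V-a_V)^{-3/2}$.  (As a consistency check, $[a_V,b_V]=[-2\sqrt2,2\sqrt2]$ gives $c_V=2^{-3/2}$, as stated in Section~\ref{s:numerics}.)  Weak convergence of the \emph{averaged} measure $\mu_N$, rather than in-probability convergence of the random empirical measure, is automatic here, since everything is phrased through expected traces.

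\emph{Invariant ensembles.}  Here I would use potential theory.  The empirical measure $\nu_N=\frac1N\sum_j\delta_{\lambda_j}$ satisfies a large-deviation principle at speed $N^2$ with good rate function proportional to $I[\nu]-\inf I$, where
\[
I[\nu]=\iint\log\frac{1}{|x-y|}\,\D\nu(x)\,\D\nu(y)+\int V(x)\,\D\nu(x),
\]
so $\nu_N\to\mu$ in probability and, by the uniform integrability coming from the confining growth of $V$, also in expectation; here $\mu=\mu_V$ is the unique minimizer of $I$ over Borel probability measures on $\mathbb R$ (uniqueness by strict convexity of $I$ on measures of finite logarithmic energy; note $\mu_V$ does not depend on $\beta$).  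The minimizer is characterized by the equilibrium conditions: for some $\ell\in\mathbb R$, $2\int\log|x-y|\,\D\mu_V(y)-V(x)=\ell$ on $\mathrm{supp}(\mu_V)$ and $\le\ell$ off it.  Under the hypotheses on $V$ in Definition~\ref{def:IE} --- real-analyticity, growth at infinity, and the one-cut regularity assumption --- one shows $\mathrm{supp}(\mu_V)=[a_V,b_V]$ is a single interval, and solving the resulting singular integral equation for $\rho$ (e.g.\ via the Cauchy transform $G(z)=\int\frac{\D\mu_V(x)}{z-x}$, which satisfies $G_+(x)+G_-(x)=V'(x)$ for $x\in(a_V,b_V)$) gives
\[
\rho(x)=\frac{1}{2\pi}\sqrt{(b_V-x)(x-a_V)}\,h(x)\quad\text{on }[a_V,b_V],
\]
with $h$ real-analytic on a neighborhood of $[a_V,b_V]$ and strictly positive there by one-cut regularity.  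Then $C_\mu=\frac{1}{2\pi}\sqrt{b_V-a_V}\,\max_{[a_V,b_V]}h$ gives the global bound, and the expansion near $b_V$ gives the edge formula with $c_V=2^{-7/4}\sqrt{b_V-a_V}\,h(b_V)>0$.

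\emph{Main obstacle.}  The only genuinely delicate point is in the invariant case: deducing, from the assumptions in Definition~\ref{def:IE}, that $\mathrm{supp}(\mu_V)$ is exactly one interval and that $h$ is strictly positive on all of $[a_V,b_V]$ --- in particular at $b_V$, so that the vanishing there is of order exactly $\frac12$, which is what pins down $c_V$ and the $\frac32$-power in the bound $C_\mu\sqrt{b_V-x}$.  This is precisely the ``one-cut regular'' input; I would import it, together with the quantitative edge asymptotics and the uniform density bound, from \cite{Bourgade2014} and the potential-theory literature it relies on --- which is why the theorem is stated with that attribution.
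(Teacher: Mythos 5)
The paper does not actually prove this theorem: it is stated as imported from~\cite{Bourgade2014} (and the surrounding potential--theory and local--law literature), and no argument is given in the text. So there is no internal proof to compare against. Your sketch is a sound and correct roadmap of what the standard argument looks like, and your algebra is right: for the semicircle you correctly get $\rho(x)=\frac{8}{\pi(b_V-a_V)^2}\sqrt{(b_V-x)(x-a_V)}$, $C_\mu=\frac{8}{\pi}(b_V-a_V)^{-3/2}$, and $c_V=2^{9/4}(b_V-a_V)^{-3/2}$, and for the invariant case the edge formula $c_V=2^{-7/4}\sqrt{b_V-a_V}\,h(b_V)$ follows from the one--cut representation of $\rho$. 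You also correctly flag the genuine crux, namely that the support is a single interval with a square--root soft edge, and that this is precisely what is being imported.

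Two small corrections. First, your hypotheses list for the invariant case (``real-analyticity, growth, one-cut regularity'') does not match Definition~\ref{def:IE}, which only requires $V\in C^4$, $\inf V''>0$, and logarithmic growth; and there the one-cut property is not a separate assumption but a \emph{consequence} of $\inf V''>0$, since strict convexity of $V$ forces the support of $\mu_V$ to be connected. With only $C^4$ regularity the function $h$ in $\rho(x)=\frac{1}{2\pi}\sqrt{(b_V-x)(x-a_V)}\,h(x)$ is merely Hölder rather than analytic, which is still enough for the stated edge expansion with an $\mathcal O(b_V-x)$ error. Second, your consistency check agrees with the paper's claim $c_V=2^{-3/2}$ for $[-2\sqrt 2,2\sqrt 2]$, but by the same formula the reference normalization $[a_V,b_V]=[-2,2]$ gives $c_V=2^{-3/4}$, not $c_V=1$ as stated in Section~\ref{s:rmt}; the latter appears to be a slip in the paper rather than in your computation, and is worth flagging if one wants the scaling constants in Theorem~\ref{t:main} to be exactly right.
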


With the chosen normalization for WEs, $\sum_{i=1}^N \sigma_{ij}^2 = 1$, $[a_V,b_V] = [-2,2]$ and $c_V = 1$ \cite{Bourgade2014}.  One can vary the support as desired by shifting and scaling, $H \to aH + bI$: the constant $c_V$ then changes accordingly. When the entries of $H$ are distributed according to a WE or an IE with high probability (see Theorem~\ref{t:generic}) the top three eigenvalues are distinct and $\beta_j \neq 0$ for $j=N,N-1,N-2$.  Next, let $d\mu$ denote the limiting spectral density or equilibrium measure for the ensemble as $N \to \infty$.  Then define $\gamma_n$ to be the smallest value of $t$ such that
\begin{align*}
\frac{n}{N} = \int_{-\infty}^t \D\mu.
\end{align*}
Thus $\{\gamma_n\}$ represent the quantiles of the equilibrium measure.


There are four fundamental parameters involved in our calculations.  First we fix $0 < \sigma < 1$ once and for all, then we fix $0 < p < 1/3$, then we choose $s < \min\{\sigma/44, p/8\}$ and then finally $0 < c \leq 10/\sigma$ will be a constant that will allow us to estimate the size of various sums.  The specific meanings of the first three parameters is given below.  Also, $C$ denotes a generic constant that can depend on $\sigma$ or $p$ but not on $s$ or $N$.  We also make statements that will be ``true for $N$ sufficiently large".  This should be taken to mean that there exists $N^* = N^*(\mu,\sigma,s,p)$ such that the statement is true for $N > N^*$. For convenience in what follows we use the notation $\epsilon = N^{-\alpha/2}$, so
\begin{center}
$(\epsilon,N)$ are in the scaling region if and only if $\alpha -10/3 \geq \sigma > 0$
\end{center}
and $\alpha = \alpha_N$ is allowed to vary with $N$.  \blue{Our calculations that follow involve first deterministic estimates and then probabilistic estimates. The following conditions provide the setting for the deterministic estimates.}

\begin{condition} \label{cond:gap} For $0 < p < \sigma/4$,
\begin{itemize}
 \item $\lambda_{N-1}-\lambda_{N-2} \geq p(\lambda_N - \lambda_{N-1})$.
\end{itemize}
Let $G_{N,p}$ denote the set of matrices that satisfy this condition.
\end{condition}

\begin{condition}\label{cond:rigidity}
For any fixed $0<  s < \min\{\sigma/44, p/8\}$
\begin{enumerate}
\item $\beta_n \leq N^{-1/2+s/2}$ for all $n$
\item $N^{-1/2-s/2} \leq \beta_n$ for $n = N, N-1$,
\item $N^{-2/3-s} \leq \lambda_N - \lambda_{n-1} \leq N^{-2/3+s}$, for $n = N, N-1$, and
\item $|\lambda_n - \gamma_n| \leq N^{-2/3+s}( \min\{n,N-n+1\})^{-1/3}$ for all $n$.
\end{enumerate}
Let $R_{N,s}$ denote the set of matrices that satisfy these conditions.
\end{condition}

\begin{remark}\label{r:IE}
It is known that the distribution (Haar measure on the unitary or orthogonal group) on the eigenvectors for IEs depends only on $\beta = 1,2$.  And, if $V(x) = x^2$ the IE is also a WE.  Therefore, if one can prove a general statement about the eigenvectors for WEs then it must also hold for IEs.  But, it should be noted that stronger results can be proved for the eigenvectors for IEs, see \cite{Stam1982} and \cite{Jiang2006} for example.
\end{remark}

The following theorem has it roots in the pursuit of proving universality in random matrix theory.  See \cite{TracyWidom} for the seminal result when $V(x) = x^2$ and $\beta = 2 	$.  Further extensions include the works of Soshnikov \cite{Soshnikov1999} and Tao and Vu \cite{Tao2010} for Wigner ensembles and \cite{Deift2007a} for invariant ensembles.
\begin{theorem}\label{t:gap-limit}
For both IEs and WEs
\begin{align*}
N^{1/2}(|\beta_N|, |\beta_{N-1}|, |\beta_{N-2}|)
\end{align*}
converges jointly in distribution to $(|X_1|,|X_2|,|X_3|)$ where $\{X_1,X_2,X_3\}$ are iid real ($\beta = 1$) or complex ($\beta =2$) standard normal random variables.  Additionally, for IEs and WEs
\begin{align*}
2^{-2/3}N^{2/3}(b_V - \lambda_N, b_V - \lambda_{N-1}, b_V-\lambda_{N-2})
\end{align*}
converges jointly in distribution to random variables $(\Lambda_{1,\beta},\Lambda_{2,\beta},\Lambda_{3,\beta})$ which are the smallest three eigenvalues of the so-called stochastic Airy operator. Furthermore, $(\Lambda_{1,\beta},\Lambda_{2,\beta},\Lambda_{3,\beta})$ are distinct with probability one.
\end{theorem}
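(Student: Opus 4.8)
The theorem is an assembly of three pillars of random matrix theory: edge universality for the top eigenvalues, universality (together with asymptotic normality) of eigenvector components at the edge, and the identification of the $\mathrm{Airy}_\beta$ point process with the spectrum of the stochastic Airy operator. The plan is to reduce both limits to the classical Gaussian ensembles, where each statement is either explicitly known or follows from exact invariance, and then to transfer to general WEs and IEs.

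I would first treat the eigenvalue limit. For $\mathrm{GOE}$ and $\mathrm{GUE}$ the joint law of $2^{-2/3}N^{2/3}(b_V-\lambda_N,\,b_V-\lambda_{N-1},\,b_V-\lambda_{N-2})$ converges to the three smallest points of the $\mathrm{Airy}_\beta$ point process; this goes back to \cite{TracyWidom} for $\beta = 2$. By the theorem of Ram\'irez, Rider and Vir\'ag, the $\mathrm{Airy}_\beta$ point process is exactly the spectrum of the stochastic Airy operator, and that operator has almost surely simple spectrum; this both identifies $(\Lambda_{1,\beta},\Lambda_{2,\beta},\Lambda_{3,\beta})$ and gives the claimed distinctness. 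For a general Wigner ensemble the same edge limit holds by edge universality --- see \cite{Soshnikov1999} and \cite{Tao2010} --- and for an invariant ensemble the corresponding edge universality is \cite{Deift2007a}; one only has to check that the potentials admitted by Definition~\ref{def:IE} fall into the class treated there, which is routine.

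Next, the eigenvector limit. For an invariant ensemble the matrix of normalized eigenvectors is exactly Haar-distributed on $O(N)$ or $U(N)$ and is independent of the eigenvalues, so the joint convergence of $N^{1/2}(|\beta_N|,|\beta_{N-1}|,|\beta_{N-2}|)$ to $(|X_1|,|X_2|,|X_3|)$ with the $X_j$ iid real ($\beta=1$) or complex ($\beta=2$) standard normals is classical --- see \cite{Stam1982}, \cite{Jiang2006}. For a Wigner ensemble, $\mathrm{GOE}$/$\mathrm{GUE}$ again serve as reference points (conditionally on the spectrum the eigenvector matrix is Haar, so the claim reduces to the invariant case), and the transfer to arbitrary entry distributions is eigenvector universality: the eigenvector moment flow of Bourgade and Yau, combined with the isotropic local semicircle law near the edge, matches the low-order joint moments of $\sqrt N\,\beta_n$, $n=N,N-1,N-2$, to the Gaussian ones as $N\to\infty$; with the delocalization bound $\beta_n\le N^{-1/2+s/2}$ and the anticoncentration bound $\beta_n\ge N^{-1/2-s/2}$ (the kind of estimate recorded in \cond{rigidity}), this upgrades to convergence in distribution.

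Finally, for the joint statement --- that the rescaled edge eigenvalues and the rescaled top eigenvector components converge jointly, with these two groups asymptotically independent, as is needed downstream --- the invariant case is immediate from exact independence, while for Wigner ensembles one needs a combined input: prove joint convergence for $\mathrm{GOE}$/$\mathrm{GUE}$, where conditioning on the eigenvalues makes the eigenvector matrix Haar and hence independent of the spectrum, then transfer via a single Green-function comparison estimate that simultaneously controls a test function of the top eigenvalues and a polynomial in the $\sqrt N\,\beta_n$. I expect this last joint transfer to be the main obstacle: edge universality and eigenvector universality are usually proved separately, and the comparison must be run so as to preserve the independence structure, using the a priori rigidity and delocalization estimates --- of which \cond{rigidity} is a typical consequence --- to bound the error terms. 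Everything else is bookkeeping.
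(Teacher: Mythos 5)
Your proposal identifies the same ingredients the paper relies on: edge universality for the top eigenvalues, universality plus Gaussianity for the first eigenvector entries, and the Ram\'irez--Rider--Vir\'ag identification of the $\mathrm{Airy}_\beta$ process with the a.s.\ simple spectrum of the stochastic Airy operator. The paper's actual proof is, however, just three citations --- \cite[Theorem 1.2]{Bourgade2013} for the eigenvector limit, \cite[Corollary 2.2 \& Theorem 2.7]{Bourgade2014} for the eigenvalue limit, and \cite[Theorem 1.1]{Ramirez2011} for distinctness --- since those references already treat Wigner and invariant ensembles in a unified way (together with Remark~\ref{r:IE} to carry the eigenvector statement from Wigner to invariant). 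The reduce-to-GOE/GUE-and-compare scaffolding you sketch is already packaged inside the cited results, so your route is correct but re-derives machinery the paper simply outsources; the older references (Soshnikov, Tao--Vu, \cite{Deift2007a}) land in the same place.

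Two substantive flags. First, you have over-read the theorem. Your final paragraph treats as "the main obstacle" a joint convergence of the rescaled eigenvalues \emph{together with} the rescaled eigenvector entries, plus their asymptotic independence; but the statement makes no such six-dimensional claim. It asserts two separate joint limits --- one for $N^{1/2}(|\beta_N|,|\beta_{N-1}|,|\beta_{N-2}|)$ and one for $2^{-2/3}N^{2/3}(b_V-\lambda_N,b_V-\lambda_{N-1},b_V-\lambda_{N-2})$ --- and downstream (see the proof of Lemma~\ref{l:tsth}) the paper only ever uses the marginals together with a union bound, so no cross-joint convergence or independence is needed anywhere. The "combined Green-function transfer" you regard as the hard step is therefore not part of what must be proved. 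Second, you invoke the anticoncentration bound $\beta_n \ge N^{-1/2-s/2}$ from Condition~\ref{cond:rigidity} as input; but as the proof of Theorem~\ref{t:generic} makes explicit, Condition~\ref{cond:rigidity}.2 is itself \emph{deduced} from the present theorem via Corollary~\ref{c:whp}. Only the a~priori delocalization bound (Condition~\ref{cond:rigidity}.1, which comes independently from \cite{Erdos2012a}) may be assumed at this stage without circularity.
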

\begin{proof}
The first claim follows from \cite[Theorem 1.2]{Bourgade2013}.  The second claim follows from \cite[Corollary 2.2 \& Theorem 2.7]{Bourgade2014}.  The last claim follows from \cite[Theorem 1.1]{Ramirez2011}.
\end{proof}

\begin{definition}\label{def:Fbeta}
The distribution function $F^\mathrm{gap}_\beta(t)$ for $\beta =1,2$ is given by
\begin{align*}
F^\mathrm{gap}_\beta(t) = \mathbb P\left( \frac{1}{\Lambda_{2,\beta}-\Lambda_{1,\beta}} \leq t \right) = \lim_{N \to \infty} \mathbb P\left( \frac{1}{c_V^{2/3} 2^{-2/3}N^{2/3}(\lambda_N-\lambda_{N-1})} \leq t \right), \quad t \geq 0.
\end{align*}
\end{definition}
\noindent Properties of $G_\beta(t) := 1-F_\beta^{\mathrm{gap}}(1/t)$, the distribution function for the first gap, are examined in \cite{Perret2013,Witte2013,Monthus2013} including the behavior of $G_\beta(t)$ near $ t= 0$ which is critical for understanding which moments of $F_\beta'(t)$ exist.

The remaining theorems in this section are compiled from results that have been obtained recently in the literature.  \blue{These results show that the conditions described above hold with arbitrarily high probability.}
\begin{theorem}\label{t:generic}
For WEs or IEs Condition~\ref{cond:rigidity} holds with high probability as $N \to \infty$, that is, for any $s > 0$
\begin{align*}
\mathbb P(R_{N,s}) = 1 + o(1),
\end{align*}
as $N \to \infty$.
\end{theorem}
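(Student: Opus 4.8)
The plan is to treat Theorem~\ref{t:generic} as a compilation result: each of the four items in Condition~\ref{cond:rigidity} holds with probability $1 - o(1)$ by results now available in the literature, and since there are only finitely many items, a union bound gives $\mathbb P(R_{N,s}) = 1 - o(1)$. It suffices to prove this for all sufficiently small fixed $s > 0$, as enlarging $s$ only weakens each of the four requirements.

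Item (1), the bound $\beta_n \le N^{-1/2 + s/2}$ for all $n$, is eigenvector delocalization. For generalized Wigner ensembles it follows from the (isotropic) local semicircle law and the associated delocalization estimates of Erd\H{o}s--Yau--Yin and Bloemendal--Erd\H{o}s--Knowles--Yau--Yin, which give $\max_n \|u_n\|_\infty \le N^{-1/2 + o(1)}$ with overwhelming probability; in particular $\mathbb P(\exists\, n : \beta_n > N^{-1/2 + s/2}) = o(1)$ for every fixed $s > 0$. For invariant ensembles the matrix of normalized eigenvectors is Haar-distributed on $O(N)$ or $U(N)$, independently of the eigenvalues (see Remark~\ref{r:IE}), so $\beta_n^2$ has the law of a single coordinate of a uniform point on the unit sphere; a standard tail bound combined with a union bound over the $N$ values of $n$ yields the same conclusion.

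Items (2) and (3) are soft consequences of Theorem~\ref{t:gap-limit}; the key point is that $s > 0$ is fixed, so $N^{\pm s} \to \{0, \infty\}$ and mere convergence in distribution suffices. For item~(2), $N^{1/2}|\beta_n|$ converges in distribution, for $n = N, N-1$, to $|X|$ with $X$ a nondegenerate real or complex Gaussian; given $\eta > 0$ choose $\delta > 0$ with $\mathbb P(|X| \le \delta) < \eta$, and then for $N$ large enough that $N^{-s/2} < \delta$ one has $\mathbb P(N^{1/2}|\beta_n| \le N^{-s/2}) \le \mathbb P(N^{1/2}|\beta_n| \le \delta) \to \mathbb P(|X| \le \delta) < \eta$ (the matching upper bound being supplied by item~(1)). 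For item~(3), $2^{-2/3} N^{2/3}(b_V - \lambda_N,\, b_V - \lambda_{N-1},\, b_V - \lambda_{N-2})$ converges in distribution to $(\Lambda_{1,\beta}, \Lambda_{2,\beta}, \Lambda_{3,\beta})$, which are almost surely finite and almost surely distinct; hence $N^{2/3}(\lambda_N - \lambda_{N-1})$ and $N^{2/3}(\lambda_N - \lambda_{N-2})$ converge in distribution to almost-surely positive and finite random variables, and the same tightness/small-ball argument traps each of them between $N^{-s}$ and $N^{s}$ with probability $1 - o(1)$.

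Item (4) is the rigidity estimate and is where essentially all the depth lies, being the only input that is not essentially soft. For generalized Wigner ensembles it is precisely the optimal edge-to-bulk rigidity of Erd\H{o}s--Yau--Yin (in the generalized-Wigner form of Erd\H{o}s--Knowles--Yau--Yin), which asserts $|\lambda_n - \gamma_n| \le N^{-2/3 + o(1)}(\min\{n, N - n + 1\})^{-1/3}$ with overwhelming probability; for invariant ensembles the corresponding rigidity follows from Bourgade--Erd\H{o}s--Yau (or, classically, from the Riemann--Hilbert steepest-descent asymptotics for orthogonal polynomials). The main obstacle I anticipate is not a gap in the mathematics but bookkeeping: one must verify that the cited rigidity statements use the same quantiles $\gamma_n$ attached to the equilibrium measure $\mu$ of Theorem~\ref{def:EM} and the same scaling convention adopted here, and that the $N^{o(1)}$ error in the literature can be absorbed into $N^{s}$ for the fixed $s$ at hand. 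Modulo tracking these conventions, intersecting the four $(1-o(1))$-probability events completes the proof.
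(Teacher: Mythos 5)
Your proposal is correct and follows essentially the same route as the paper's proof: a union bound over the four items of Condition~\ref{cond:rigidity}, with item~(1) supplied by eigenvector delocalization, item~(4) by eigenvalue rigidity, and items~(2)--(3) treated as soft consequences of the joint convergence in Theorem~\ref{t:gap-limit} together with a tightness argument, which is precisely what Corollary~\ref{c:whp} encapsulates. The only minor divergence is in item~(1) for invariant ensembles, where you argue directly via the sphere-coordinate law of $\beta_n^2$ plus a union bound, whereas the paper transfers the generalized Wigner estimate to IEs through Remark~\ref{r:IE}; both routes are valid and rest on the same fact about the Haar distribution of eigenvectors.
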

\begin{proof}
We first consider WEs.  The fact that the probability of \cond{rigidity}.1 tends to unity follows from \cite[Theorem 2.1]{Erdos2012a} using estimates on the (1,1) entry of the Green's function.  See \cite[Section 2.1]{Erdos2012b} for a discussion of using these estimates.  The fact that the probability of each of \cond{rigidity}.2-3  tends to unity follows from Theorem~\ref{t:gap-limit} using Corollary~\ref{c:whp}.  And finally, the statement that the probability \cond{rigidity}.4 tends to unity as $N \to \infty$ is the statement of the rigidity of eigenvalues, the main result of \cite{Erdos2012a}.  Following Remark~\ref{r:IE}, we then have that the probability of \cond{rigidity}.1-2 tends to unity for IEs.

For IEs, the fact that the probability of \cond{rigidity}.4 tends to unity follows from \cite[Theorem 2.4]{Bourgade2013}.  Again, the fact that the probability of \cond{rigidity}.3  tends to unity follows from Theorem~\ref{t:gap-limit} using Corollary~\ref{l:whp}.
\end{proof}

\begin{theorem}\label{t:p}
For both WEs and IEs
\begin{align*}
\lim_{p \downarrow 0}\limsup_{N \to \infty} \mathbb P(G_{N,p}^c) = 0.
\end{align*}
\end{theorem}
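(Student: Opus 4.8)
The plan is to reduce the claim to the joint distributional convergence of the top three edge eigenvalues recorded in Theorem~\ref{t:gap-limit}, exploiting the fact that the bad event $G_{N,p}^c$ is scale invariant. Since the eigenvalues are always ordered, \cond{gap} fails precisely when $\lambda_{N-1}-\lambda_{N-2} < p(\lambda_N-\lambda_{N-1})$. Introducing the rescaled triple
\begin{align*}
Y_N = \big(Y_N^{(1)},Y_N^{(2)},Y_N^{(3)}\big) := 2^{-2/3}N^{2/3}\big(b_V-\lambda_N,\; b_V-\lambda_{N-1},\; b_V-\lambda_{N-2}\big),
\end{align*}
one has $\lambda_N-\lambda_{N-1} = 2^{2/3}N^{-2/3}(Y_N^{(2)}-Y_N^{(1)})$ and $\lambda_{N-1}-\lambda_{N-2} = 2^{2/3}N^{-2/3}(Y_N^{(3)}-Y_N^{(2)})$, so the $N$-dependent prefactors cancel and, exactly,
\begin{align*}
G_{N,p}^c = \{Y_N \in A_p\}, \qquad A_p := \big\{(y_1,y_2,y_3)\in\mathbb R^3 : y_3-y_2 < p(y_2-y_1)\big\}.
\end{align*}
The set $A_p$ is an open half-space and does not depend on $N$. (In the discrete Wigner case, where $\lambda_N=\lambda_{N-1}$ can occur with positive probability, this identity is unaffected: on $\{\lambda_N=\lambda_{N-1}\}$ the ordering forces $\lambda_{N-1}-\lambda_{N-2}\ge 0$, so $G_{N,p}^c$ is empty there, consistent with $Y_N^{(2)}=Y_N^{(1)}\le Y_N^{(3)}$.)

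Next I would apply Theorem~\ref{t:gap-limit}, which gives $Y_N \Rightarrow \Lambda := (\Lambda_{1,\beta},\Lambda_{2,\beta},\Lambda_{3,\beta})$ as $N\to\infty$. Using the portmanteau theorem for the \emph{closed} set $\overline{A_p} = \{y_3-y_2 \le p(y_2-y_1)\}$ --- the closure, rather than $A_p$ itself, so that the weak-convergence inequality runs in the direction of an upper bound on $\limsup_N$ --- together with $A_p\subseteq\overline{A_p}$, I obtain
\begin{align*}
\limsup_{N\to\infty}\mathbb P(G_{N,p}^c) \;\le\; \limsup_{N\to\infty}\mathbb P\big(Y_N\in\overline{A_p}\big) \;\le\; \mathbb P\big(\Lambda\in\overline{A_p}\big) = \mathbb P\big(\Lambda_{3,\beta}-\Lambda_{2,\beta} \le p(\Lambda_{2,\beta}-\Lambda_{1,\beta})\big).
\end{align*}

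Finally I would send $p\downarrow 0$. The sets $\overline{A_p}$ increase with $p$, so the events $\{\Lambda\in\overline{A_p}\}$ decrease as $p\downarrow 0$, with limiting event $\bigcap_{p>0}\{\Lambda\in\overline{A_p}\} = \{\Lambda_{1,\beta}\le\Lambda_{2,\beta}\}\cap\{\Lambda_{3,\beta}\le\Lambda_{2,\beta}\}$, which has probability zero since $\Lambda_{2,\beta}<\Lambda_{3,\beta}$ almost surely by the last assertion of Theorem~\ref{t:gap-limit}. Continuity from above of the probability measure then yields $\lim_{p\downarrow 0}\mathbb P(\Lambda\in\overline{A_p})=0$, and hence $\lim_{p\downarrow 0}\limsup_{N\to\infty}\mathbb P(G_{N,p}^c)=0$, as claimed. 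The argument is entirely soft once Theorem~\ref{t:gap-limit} is in hand, so there is no genuine analytic obstacle; the only point demanding a little care is the use of the closed set $\overline{A_p}$ (not the open set $A_p$) in the portmanteau step. All of the real substance has been absorbed into Theorem~\ref{t:gap-limit}, i.e.\ into the edge-universality results cited there (Tracy--Widom, Soshnikov, Tao--Vu, and the invariant-ensemble analogues) identifying the limiting law of the top eigenvalues with the bottom of the stochastic Airy spectrum.
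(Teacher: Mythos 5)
Your proof is correct and follows essentially the same route as the paper's: rewrite $G_{N,p}^c$ as a scale-invariant event about the rescaled top three eigenvalues, apply the joint edge convergence of Theorem~\ref{t:gap-limit}, and then use that $\Lambda_{2,\beta} < \Lambda_{3,\beta}$ almost surely. The only minor refinement in your version is routing the weak-convergence step through the closed half-space $\overline{A_p}$ and the portmanteau upper bound, whereas the paper asserts $\limsup_N \mathbb P(G_{N,p}^c) = \mathbb P(\Lambda_{3,\beta}-\Lambda_{2,\beta} < p(\Lambda_{2,\beta}-\Lambda_{1,\beta}))$ directly, implicitly using that the boundary of the open half-space carries zero limit probability.
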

\begin{proof}
It follows from Theorem~\ref{t:gap-limit} that
\begin{align*}
\limsup_{N \to \infty} \mathbb P(G_{N,p}^c) = \lim_{N\to \infty} \mathbb P(\lambda_{N-1}-\lambda_{N-2} < p(\lambda_N-\lambda_{N-1}))
&= \mathbb P(\Lambda_{3,\beta} -\Lambda_{2,\beta} < p(\Lambda_{2,\beta} -\Lambda_{1,\beta})).
\end{align*}
Then
\begin{align*}
\lim_{p \downarrow 0} \mathbb P(\Lambda_{3,\beta} -\Lambda_{2,\beta} < p(\Lambda_{2,\beta} -\Lambda_{1,\beta})) = \mathbb P \left( \bigcap_{p > 0}\left\{\Lambda_{3,\beta} -\Lambda_{2,\beta} < p(\Lambda_{2,\beta} -\Lambda_{1,\beta}) \right\} \right) = \mathbb P(\Lambda_{3,\beta} = \Lambda_{2,\beta}).
\end{align*}
But from \cite[Theorem 1.1]{Ramirez2011} $\mathbb P(\Lambda_{3,\beta} = \Lambda_{2,\beta}) = 0$.
\end{proof}

\noindent Throughout what follows we assume we are given a WE or an IE.


\subsection{Technical lemmas}\label{s:tech}

Define $\delta_j = 2(\lambda_{N}-\lambda_j)$ and $I_c = \{1 \leq n \leq N-1: \delta_n/\delta_{N-1} \geq 1 + c \}$ for $c > 0$.
\begin{lemma}\label{l:Ic}
  Let $0 < c < 10/\sigma$.  Given Condition~\ref{cond:rigidity}
  \begin{align*}
    |I_c^c| \leq N^{2s}
  \end{align*}
  for $N$ sufficiently large, where $^c$ denotes the compliment relative to $\{1,\ldots,N-1\}$.
\end{lemma}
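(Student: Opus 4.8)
The plan is to show that the set $I_c^c = \{1 \le n \le N-1 : \delta_n/\delta_{N-1} < 1+c\}$ consists of indices $n$ whose eigenvalues $\lambda_n$ lie within a small window below $\lambda_N$, and then to count how many quantiles $\gamma_n$ can fit into such a window using the square-root vanishing of the equilibrium density near the edge $b_V$. Recall $\delta_j = 2(\lambda_N - \lambda_j)$, so $\delta_n/\delta_{N-1} < 1+c$ is equivalent to $\lambda_N - \lambda_n < (1+c)(\lambda_N - \lambda_{N-1})$. By \cond{rigidity}.3 we have $\lambda_N - \lambda_{N-1} \le N^{-2/3+s}$, hence every $n \in I_c^c$ satisfies $\lambda_N - \lambda_n < (1+c) N^{-2/3+s}$, i.e.\ $\lambda_n > \lambda_N - (1+c)N^{-2/3+s}$.

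Next I would pass from the $\lambda_n$ to the quantiles $\gamma_n$ using \cond{rigidity}.4, which gives $|\lambda_n - \gamma_n| \le N^{-2/3+s}(\min\{n, N-n+1\})^{-1/3} \le N^{-2/3+s}$. Combined with $\lambda_N \le b_V$ (which follows from $\gamma_N \le b_V$ and rigidity, or one simply uses $\lambda_N - \lambda_n \le b_V - \gamma_n + 2N^{-2/3+s}$ and $b_V \ge \gamma_N \ge \lambda_N - N^{-2/3+s}$), every $n \in I_c^c$ satisfies $b_V - \gamma_n \le C N^{-2/3+s}$ for a constant $C$ depending only on $c$ and $\sigma$ (here $c < 10/\sigma$ bounds $1+c$). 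Now I translate this into a lower bound on the index: since $\gamma_n$ is defined by $n/N = \int_{-\infty}^{\gamma_n}\rho$, we have
\begin{align*}
  1 - \frac{n}{N} = \int_{\gamma_n}^{b_V} \rho(x)\,\D x \le C_\mu \int_{\gamma_n}^{b_V}\sqrt{b_V - x}\,\D x = \frac{2}{3}C_\mu (b_V - \gamma_n)^{3/2} \le C' N^{-1+3s/2},
\end{align*}
using the density bound $\rho(x) \le C_\mu \sqrt{b_V - x}$ from Theorem~\ref{def:EM}. Hence $N - n \le C' N^{3s/2}$, so $n \ge N - C'N^{3s/2}$ for every $n \in I_c^c$, which immediately gives $|I_c^c| \le C' N^{3s/2} \le N^{2s}$ for $N$ sufficiently large (absorbing the constant, since $3s/2 < 2s$).

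The main obstacle is bookkeeping: keeping the edge estimate $\lambda_N \le b_V + O(N^{-2/3+s})$ honest (one should not assume $\lambda_N \le b_V$ exactly — rigidity only controls $|\lambda_N - \gamma_N|$, and $\gamma_N$ may slightly undershoot $b_V$), and making sure the constant that appears in $b_V - \gamma_n \le C N^{-2/3+s}$ genuinely depends only on $c$ (through $1+c \le 1 + 10/\sigma$) and $\mu$, not on $s$ or $N$, so that the final exponent $3s/2$ is strictly less than $2s$ and the inequality $|I_c^c| \le N^{2s}$ holds once $N$ exceeds a threshold depending on $\sigma, s, p, \mu$. Everything else is a direct chain of the estimates already assembled in \cond{rigidity} and Theorem~\ref{def:EM}.
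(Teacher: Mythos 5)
Your proof is correct and follows essentially the same route as the paper's: both use Condition~\ref{cond:rigidity}.3 to confine the relevant eigenvalues to an $O(N^{-2/3+s})$ window below the top, Condition~\ref{cond:rigidity}.4 to pass to the quantiles $\gamma_n$, and the square-root edge bound on $\rho$ from Theorem~\ref{def:EM} to count how many quantiles fit in that window, yielding $|I_c^c|\lesssim N^{3s/2}\leq N^{2s}$. Your explicit care about the sign of $b_V-\lambda_N$ is a small tidying-up of the paper's argument (which simply uses $\gamma_N=b_V$), but the underlying mechanism is identical.
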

\begin{proof}
  We use rigidity of the eigenvalues, Condition~\ref{cond:rigidity}.4. So, $|\lambda_n - \gamma_n| \leq N^{-2/3+s} (\hat n)^{-1/3}$ where $\hat n = \min\{n, N-n + 1\}$.  Recall
  \begin{align*}
    I_c^c \subset \{1 \leq n \leq N-1: \lambda_{N}-\lambda_n < (1+c)(\lambda_{N}-\lambda_{N-1})\}.
  \end{align*}
  Define
  \begin{align*}
    J_c = \{1 \leq n \leq N-1: \gamma_N - \gamma_n  \leq (2 + c + (\hat n)^{-1/3}) N^{-2/3+s}\}.
  \end{align*}
  If $n \in I_c$ then
  \begin{align*}
    \lambda_N - \lambda_n &\leq (1+c) N^{-2/3+s},\\
    \gamma_N-N^{-2/3+s} - (\gamma + (\hat n)^{-1/3} N^{-2/3+s}) & \leq \lambda_N - \lambda_n \leq (1+c) N^{-2/3+s},\\
    \gamma_N-\gamma_n &\leq (2 + c + (\hat n)^{-1/3}) N^{-2/3+s},
  \end{align*}
  and hence $n \in J_c$.  Then compute the  asymptotic size of the set $J_c$ let $n^*$ be the smallest element of $J_c$.  Then $|J^*| = N - n^*$ so that
  \begin{align*}
  \frac{n^*}{N} = \int_{-\infty}^{\gamma_{n^*}} \D\mu, \quad |I_c^c| \leq |J_c| =   N - n^* = N\int_{n^*}^\infty \D \mu.
  \end{align*}
  Then using Definition~\ref{def:EM}, $\gamma_N = b_V$ and $n^* \geq b_V - (2+c+ (\hat n)^{-1/3} )N^{-2/3+s} \geq b_V - (3+c)N^{-2/3+s}$ to see
  \begin{align*}
  |I_c^c| \leq N\int_{n^*}^\infty \D \mu \leq C_\mu N \int_{b_V - (3+c)N^{-2/3+s}}^{b_V} \sqrt{b_V -x}\,\D x = \frac{2C_\mu}{3} (3+c)^{3/2}N^{3s/2}.
  \end{align*}
  and then because $\sigma$ is fixed hence $c$ has an upper bound and $s > 0$, $|I_c^c| \leq N^{2s}$ for sufficiently large $N$.
  \end{proof}

    \noindent We use the notation $\nu_n = \beta_n^2/\beta_N^2$ and note that for a matrix in $R_{N,s}$ we have $\nu_n \leq N^{2s}$ and $\sum_n \nu_n = \beta_N^{-2} \leq N^{1+s}$.  One of the main tasks that will follow is estimating the following sums.
  \begin{lemma}\label{l:estimate}
	Given Condition~\ref{cond:rigidity}, $0 < c \leq 10/\sigma$ and $j \leq 3$ there exists an absolute constant $C$ such that
    \begin{align*}
    	 N^{-2s} \delta_{N-1}^j \E^{-\delta_{N-1}t} \leq \sum_{n=1}^{N-1} \nu_n \delta^j_n \E^{-\delta_n t} \leq C \E^{-\delta_{N-1}t} \left(N^{4s} \delta_{N-1}^j + N^{1+s} \E^{-c \delta_{N-1} t} \right),
	\end{align*}
    for $N$ sufficiently large.
\end{lemma}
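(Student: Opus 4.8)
The plan is to split the sum $\sum_{n=1}^{N-1}\nu_n\delta_n^j\E^{-\delta_n t}$ according to whether $n\in I_c$ or $n\in I_c^c$, and to control the two pieces using the structural estimates we have in hand: the bound $\nu_n\le N^{2s}$, the sum bound $\sum_n\nu_n=\beta_N^{-2}\le N^{1+s}$, the cardinality estimate $|I_c^c|\le N^{2s}$ from Lemma~\ref{l:Ic}, and the ordering $\delta_n\ge\delta_{N-1}$ for all $n\le N-1$ together with $\delta_n\ge(1+c)\delta_{N-1}$ for $n\in I_c$. A preliminary observation is that the function $x\mapsto x^j\E^{-xt}$ is, for $j\le 3$, bounded on $[\delta_{N-1},\infty)$ in a way that is convenient here: either $t$ is small enough that $x^j\E^{-xt}$ is still increasing at $x=\delta_{N-1}$ for the relevant range, or $t$ is large and the exponential dominates; in all cases one has $x^j\E^{-xt}\le C\,\delta_{N-1}^j\E^{-\delta_{N-1}t}$ for $x\ge\delta_{N-1}$ once one absorbs a $(j/t)^j$-type factor and uses $\delta_{N-1}t$ being eventually large (from Condition~\ref{cond:rigidity}.3, $\delta_{N-1}=2(\lambda_N-\lambda_{N-1})\ge 2N^{-2/3-s}$, but $t$ itself ranges over the halting-time scale, so this has to be done carefully — see the obstacle paragraph).

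For the \textbf{lower bound}, I would simply keep the single term $n=N-1$: since $\nu_{N-1}=\beta_{N-1}^2/\beta_N^2\ge N^{-1-s/2}/N^{-1+s/2}=N^{-s}\ge N^{-2s}$ by Condition~\ref{cond:rigidity}.1--2, and $\delta_{N-1}$ is exactly the $(N-1)$st term's rate, we get $\sum_{n}\nu_n\delta_n^j\E^{-\delta_n t}\ge \nu_{N-1}\delta_{N-1}^j\E^{-\delta_{N-1}t}\ge N^{-2s}\delta_{N-1}^j\E^{-\delta_{N-1}t}$. For the \textbf{upper bound}, write
\begin{align*}
\sum_{n=1}^{N-1}\nu_n\delta_n^j\E^{-\delta_n t}=\sum_{n\in I_c^c}\nu_n\delta_n^j\E^{-\delta_n t}+\sum_{n\in I_c}\nu_n\delta_n^j\E^{-\delta_n t}.
\end{align*}
In the first sum there are at most $N^{2s}$ terms, each with $\nu_n\le N^{2s}$ and $\delta_n^j\E^{-\delta_n t}\le C\delta_{N-1}^j\E^{-\delta_{N-1}t}$ (here $\delta_n\ge\delta_{N-1}$ and $\delta_n\le\delta_1=2(\lambda_N-\lambda_1)$ is bounded, so this factor is genuinely comparable), giving a contribution $\le C N^{4s}\delta_{N-1}^j\E^{-\delta_{N-1}t}$. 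In the second sum, every $n$ has $\delta_n\ge(1+c)\delta_{N-1}$, so $\E^{-\delta_n t}\le \E^{-c\delta_{N-1}t}\E^{-\delta_{N-1}t}$; bounding $\delta_n^j$ by an absolute constant (again $\delta_n\le\delta_1=O(1)$) and using $\sum_{n\in I_c}\nu_n\le\sum_n\nu_n\le N^{1+s}$ yields a contribution $\le C N^{1+s}\E^{-c\delta_{N-1}t}\E^{-\delta_{N-1}t}$. Adding the two pieces gives exactly the claimed right-hand side $C\E^{-\delta_{N-1}t}(N^{4s}\delta_{N-1}^j+N^{1+s}\E^{-c\delta_{N-1}t})$.

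The \textbf{main obstacle} is making rigorous the repeatedly-used inequality $\delta_n^j\E^{-\delta_n t}\le C\,\delta_{N-1}^j\E^{-\delta_{N-1}t}$ uniformly in the relevant range of $t$ and $N$. The function $x^j\E^{-xt}$ is increasing on $[0,j/t]$ and decreasing after; if $\delta_{N-1}\ge j/t$ the inequality is immediate with $C=1$ for all $x\ge\delta_{N-1}$, but if $\delta_{N-1}<j/t<\delta_1$ one must compare $\delta_{N-1}^j\E^{-\delta_{N-1}t}$ with the interior maximum $(j/t)^j\E^{-j}$, which forces a bound of the form $C(\delta_{N-1}t)^{-j}$ rather than an absolute constant. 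One then has to check that this is harmless: for $j\le 3$ and $\delta_{N-1}t$ bounded below, the factor is $O(1)$; the point is that in the regime where the estimate is actually invoked (in the analysis of $T^{(1)}$ in Sections~\ref{s:universality} and \ref{s:prob}), one has $t$ comparable to $1/\delta_{N-1}$ times a logarithm, so $\delta_{N-1}t\to\infty$ and the spurious factor only costs a power of a logarithm, which is still absorbed into the $N^{4s}$ (respectively $N^{1+s}$) slack. I would state the lemma, as written, with the understanding that $C$ is "absolute" in the sense of not depending on $s$ or $N$ but that its role is purely to absorb these $O((\delta_{N-1}t)^{-j})$ and $\delta_1=O(1)$ factors; the one genuinely delicate check is that $\delta_1=2(\lambda_N-\lambda_1)$ is bounded above, which follows from Condition~\ref{cond:rigidity}.4 (rigidity) placing $\lambda_1,\lambda_N$ within $O(N^{-2/3+s})$ of the edges $a_V,b_V$ of the compactly supported equilibrium measure.
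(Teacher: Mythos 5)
Your decomposition is the paper's: you split the sum over $I_c$ and $I_c^c$, you keep only the $n=N-1$ term for the lower bound, and you treat the $I_c$ piece by bounding $\delta_n^j$ by a constant, using $\delta_n\geq(1+c)\delta_{N-1}$ for the exponential, and applying $\sum_n\nu_n\leq N^{1+s}$. That all matches.

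Where you go astray is the $I_c^c$ piece, and the ``main obstacle'' paragraph is a red herring you created for yourself by not using the defining property of $I_c^c$. By definition, $n\in I_c^c$ means $\delta_{N-1}\leq\delta_n<(1+c)\delta_{N-1}$. So on $I_c^c$ you have two elementary facts: $\delta_n^j<(1+c)^j\delta_{N-1}^j$ and $\E^{-\delta_n t}\leq\E^{-\delta_{N-1}t}$ (the latter because $\delta_n\geq\delta_{N-1}$ and $t\geq 0$). Multiplying these gives $\delta_n^j\E^{-\delta_n t}\leq(1+c)^j\delta_{N-1}^j\E^{-\delta_{N-1}t}$ with no analysis of where $x\mapsto x^j\E^{-xt}$ peaks, and $(1+c)^j\leq(1+10/\sigma)^3$ is exactly the kind of $\sigma$-dependent factor the paper's generic constant $C$ is declared to absorb. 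Combined with $\nu_n\leq N^{2s}$ and $|I_c^c|\leq N^{2s}$ from Lemma~\ref{l:Ic}, this closes the $I_c^c$ estimate for \emph{every} $t\geq 0$.

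Your proposed resolution of the phantom obstacle is also not harmless: you argue the spurious $(\delta_{N-1}t)^{-j}$ factor is controlled because ``$t$ is comparable to $1/\delta_{N-1}$ times a logarithm'' in the places the lemma is later invoked. But the lemma is stated, and used (e.g.\ in the proof of Lemma~\ref{l:toda-T} with $t=a\log N/\delta_{N-1}$ for $a$ down to $0$), uniformly in $t\geq 0$, so a proof that silently restricts the range of $t$ would leave a genuine gap. Fortunately no such restriction is needed once you use $\delta_n<(1+c)\delta_{N-1}$ on $I_c^c$. One last small point: in the lower bound, Condition~\ref{cond:rigidity}.1--2 give $\beta_{N-1}^2\geq N^{-1-s}$ and $\beta_N^2\leq N^{-1+s}$, hence $\nu_{N-1}\geq N^{-2s}$; your intermediate exponents ($N^{-1\pm s/2}$, yielding $N^{-s}$) are off, though your final inequality $\nu_{N-1}\geq N^{-2s}$ is what the conditions actually deliver.
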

	\begin{proof}
    For $j \leq 3$
    \begin{align*}
    \sum_{n=1}^{N-1} \nu_n \delta^j_n \E^{-\delta_n t} &= \left( \sum_{n \in I_c} + \sum_{n \in I_c^c}\right) \nu_n \delta^j_n \E^{-\delta_n t} \\
    &\leq \sum_{n \in I^c_c} \nu_n (1+c)^j \delta_{N-1}^j \E^{-\delta_{N-1} t} + 2^j\sum_{n \in I_c} \nu_n |\lambda_1-\lambda_N|^j \E^{- (1+c) \delta_{N-1} t}.
    \end{align*}
    It also follows that $\lambda_N-\lambda_1 \leq b_V-a_V +1$ so that by Lemma~\ref{l:Ic} for sufficiently large $N$
    \begin{align*}
   \sum_{n=1}^{N-1} \nu_n \delta^j_n \E^{-\delta_n t} &\leq C \E^{-\delta_{N-1}t} \left(N^{4s} \delta_{N-1}^j + N^{1+s} \E^{-c \delta_{N-1} t} \right).
	\end{align*}
    To find a lower bound, we just keep the first term, as that should be the largest
    \begin{align*}
    \sum_{n=1}^{N-1} \nu_n \delta^j_n \E^{-\delta_n t} \geq \nu_{N-1} \delta^j_{N-1} \E^{-\delta_{N-1} t} \geq N^{-2s} \delta_{N-1}^j \E^{-\delta_{N-1}t}.
	\end{align*}
    \end{proof}

\section{Estimates for the Toda algorithm}\label{s:universality}
Remarkably, \eqref{e:Toda} can be solved explicitly by a QR factorization procedure, see for example \cite{Symes1982}. For $X(0) = H$ we have for $t \geq 0$
\begin{align*}
\E^{tH} = Q(t) R(t),
\end{align*}
where $Q$ is orthogonal ($\beta = 1$) or unitary ($\beta = 2$) and $R$ has positive diagonal entries.  This \emph{QR factorization} for $\E^{tH}$ is unique: Note that $Q(t)$ is obtained by apply Gram--Schmidt to the columns of $\E^{tH}$. We claim that $X(t) = Q^*(t) H Q(t)$ is the solution of \eqref{e:Toda}.  Indeed, by differentiating, we obtain
\begin{align}
H \E^{tH} &= H Q(t)R(t) = \dot Q(t)R(t) + Q(t)\dot R(t),\notag\\
X(t) &= Q^*(t) \dot Q(t) + \dot R(t) R^{-1}(t).\label{e:X}
\end{align}
Then because $\dot R(t) R^{-1}(t)$ is upper triangular
\begin{align*}
(X(t))_- = (Q^*(t)\dot Q(t))_-.
\end{align*}
Furthermore, from $Q^*(t)Q(t) = I$ we have $Q^*(t) \dot Q(t) = - \dot Q^*(t) Q(t)$ so that $Q^*(t)\dot Q(t)$ is skew Hermitian.  Thus, $B(X(t)) = Q^*(t)\dot Q(t)-[Q^*(t)\dot Q(t)]_D$, where $[\cdot]_D$ gives the diagonal part of the matrix.  However, as $Q^*(t) \dot Q(t)$ is skew Hermitian $[Q^*(t)\dot Q(t)]_D$ is purely imaginary.  On the other hand, we see from \eqref{e:X} that the diagonal is real.  It follows that $[Q^*(t)\dot Q(t)]_D = 0$ and $B(X(t)) = Q^*(t)\dot Q(t)$. Using \eqref{e:Toda} we have
\begin{align*}
\dot X(t) & = \dot Q^*(t) H Q(t) + Q^*(t) H \dot Q(t),
\end{align*}
and so
\begin{align*}
\dot X(t) = X(t)B(X(t)) &- B(X(t)) X(t).
\end{align*}
When $t = 0$, $Q(0) = I$ so that $X(0) = H$ and by uniqueness for ODEs this shows $X(t)$ is indeed the solution of \eqref{e:Toda}.

As the eigenvalues of $X(0) = H$ are not necessarily simple (indeed for BOE there is a non-zero probability for a matrix to have repeated eigenvalues), it is not clear a priori that the eigenvectors of $X(t)$ can be chosen to be smooth functions of $t$.  However, for the case at hand we can proceed in the following way.  For $X(0)= H$ there exists a (not necessarily unique) unitary matrix $U_0$ such that $X(0) = U_0\Lambda U_0^*$ where $\Lambda = \diag(\lambda_1,\ldots,\lambda_N)$.  Then $X(t) = Q^*(t)HQ(t) = U(t) \Lambda U^*(t)$ where $U(t) = Q^*(t) U_0$.  Then the $j$th column $u_j(t)$ of $U(t)$ is a smooth eigenvector of $X(t)$ corresponding to eigenvalue $\lambda_{j}$.   From the eigenvalue equation
\begin{align*}
(X(t)- \lambda_j) u_j(t) &= 0,
\end{align*}
we obtain (following Moser \cite{Moser1975})
\begin{align*}
\dot X(t)u_j(t) &+ (X(t)- \lambda_j) \dot u_j(t) = 0,\\
(X(t) B(X(t)) &- B(X(t))X(t)) u_j(t) + (X(t)-\lambda_j) \dot u_j(t) = 0,\\
(X(t) &- \lambda_j)[\dot u_j(t) + B(X(t)) u_j(t)] = 0.
\end{align*}
This last equation implies  $\dot u_j(t) + B(X(t))u_j$ must be a (possibly time-dependent) linear combination of the eigenvectors corresponding to $\lambda_j$.  Let $U_j(t) = [u_{j_1}(t),\ldots,u_{j_m}(t)]$ be eigenvectors corresponding to a repeated eigenvalue $\lambda_j$ so that for $i = 1,\ldots,m$
\begin{align*}
\dot u_{j_i}(t) + B(X(t)) u_{j_i}(t) &= \sum_{k=1}^m d_{ki}(t) u_{j_k}(t),
\end{align*}
and so
\begin{align}\label{e:Uj}
\left[\frac{d}{dt} + B(X(t)) \right] U_j(t) &= U_j(t) D(t), \quad D(t) = (d_{ki}(t))_{k,i=1}^m.
\end{align}
Note that $U_j^*(t) U_j(t) = I_m$, the $m\times m$ identity matrix. Then multiplying \eqref{e:Uj} on the left by $U_j^*(t)$ and then multiplying the conjugate transpose of \eqref{e:Uj} on the right by $U_j(t)$, we obtain
\begin{align*}
U_j^*(t)\dot U_j(t) + U_j^*(t) B(X(t)) U_j(t) &= D(t),\\
\dot U_j^*(t) U_j(t) + U_j^*(t) [B(X(t))]^* U_j(t) &= D^*(t).
\end{align*}
Because $\D/\D t [U_j^*(t)U_j(t)] = 0$ and $B(X(t))$ is skew Hermitian, the addition of these two equations gives $D(t) = - D^*(t)$.  Let $S(t)$ be the solution of $\dot S(t) = - D(t)S(t)$ with $S(0) = I_m$.  Then $\D/\D t [S^*(t)S(t)] = -S^*(t)D(t)S(t) + S^*(t)D(t)S(t) = 0$ and hence $S^*(t)S(t) = C = I_m$, \emph{i.e.}, $S(t)$ is unitary. In particular, $\tilde U_j(t) :=  U_j(t) S(t)$ has orthonormal columns and we find
\begin{align*}
\left[\frac{d}{dt} + B(X(t)) \right] \tilde U_j(t) = U_j(t) D(t)S(t) - U_j(t) D(t) S(s) = 0.
\end{align*}
We see that a smooth normalization for the eigenvectors of $X(t)$ can always be chosen so that $D(t)=0$.  Without loss of generality, we can assume that $U(t)$ solves \eqref{e:Uj} with $D(t) = 0$. Then for $U(t) = (U_{ij}(t))_{i,j=1}^N$
\begin{align*}
\dot U_{1j}(t) & = - e_1^* B(X(t))u_j(t) = (B(X(T)) e_1)^* u_j(t) = (X(t)e_1 - X_{11}^*(t)e_1)^*u_j(t)\\
& = e_1^*(X(t) - X_{11}(t))u_j(t)=(\lambda_j - X_{11}(t)) U_{1j}(t).
\end{align*}
A direct calculation using
\begin{align*}
X_{11}(t) = e_1^* X(t)e_1 = \sum_{j=1}^N \lambda_j |U_{1j}(t)|^2,
\end{align*}
shows that
\begin{align*}
U_{1j}(t) = \frac{U_{1j}(0) \E^{\lambda_j t}}{\ds \left(\sum_{j=1}^N |U_{1j}(0)|^2 \E^{2 \lambda_j t} \right)^{1/2}}, \quad 1 \leq j \leq N.
\end{align*}
Also
\begin{align*}
X_{1k}(t) =\sum_{j=1}^N \lambda_j U_{1j}^*(t) U_{kj}(t),
\end{align*}
and hence
\begin{align*}
\sum_{k=2}^N |X_{1k}(t)|^2 &= \sum_{k=2}^N X_{1k}(t)X_{k1}(t) = [X^2(t)]_{11}-X^2_{11}(t)\\
&= \sum_{k=1}^N \lambda_k^2 |U_{1k}(t)|^2 - \left( \sum_{k=1}^N \lambda_k |U_{1k}(t)|^2\right)^2 = \sum_{k=1}^N (\lambda_k - X_{11}(t))^2 |U_{1k}(t)|^2.
\end{align*}
Thus
\begin{align*}
E(t) :=\sum_{k=2}^N |X_{1k}(t)|^2 =  \sum_{j=1}^N (\lambda_j-X_{11}(t))^2 |U_{1j}(t)|^2.
\end{align*}
  We also note that
\begin{align*}
\lambda_N -X_{11}(t) = \sum_{j=1}^N (\lambda_N-\lambda_j) |U_{1j}(t)|^2.
\end{align*}
From these calculations, if $U_{11}(0) \neq 0$, it follows that
\begin{align*}
\begin{array}{c}
X_{11}(t) - \lambda_{N}\\
E(t)
\end{array} \to 0, \quad N \to \infty.
\end{align*}
While $X_{11}(t)-\lambda_{N}$ is of course the true error in computing $\lambda_N$ we use $E(t)$ to determine a convergence criterion as it is easily observable:  Indeed, as noted above, if  $E(t) < \epsilon$ then $|X_{11}(t) -\lambda_j|< \epsilon$, for some $j$. With high probability, $\lambda_j = \lambda_N$.

Note that, in particular, from the above formulae, $E(t)$ and $\lambda_N-X_{11}(t)$ depend \textbf{only} on the eigenvalues and the moduli of the first components of the eigenvectors of $X(0) = H$.  This fact is critical to our analysis.  With the notation $\beta_j = |U_{1j}(0)|$ we have that
\begin{align*}
|U_{1j}(t)| = \frac{\beta_j \E^{\lambda_jt}}{\ds \left(\sum_{n=1}^N \beta_n^2 \E^{2\lambda_n t} \right)^{1/2}}.
\end{align*}
A direct calculation shows that
\begin{align*}
E(t) &= E_0(t) + E_1(t),
\end{align*}
where
\begin{align*}
E_0(t) &= \frac{1}{4}\frac{\ds\sum_{n=1}^{N-1} \delta_{n}^2 \nu_n \E^{-\delta_{n} t}}{\ds\left(1+ \sum_{n=1}^{N-1} \nu_n \E^{-\delta_n t}\right)^2}.\\
E_1(t) &= \frac{\ds \left(\sum_{n=1}^{N-1} \lambda_n^2 \nu_n \E^{-\delta_{n} t}\right)\left(\sum_{n=1}^{N-1} \nu_n \E^{-\delta_n t}\right) - \left(\sum_{n=1}^{N-1} \lambda_n \nu_n \E^{-\delta_n t}\right)^2}{\ds\left(1+ \sum_{n=1}^{N-1} \nu_n \E^{-\delta_n t}\right)^2}.
\end{align*}
Note that $E_1(t) \geq 0$ by the Cauchy--Schwarz inequality; of course, $E_0(t)$ is trivially positive. It follows that $E(t)$ is small if and only if both $E_0(t)$ and $E_1(t)$ are small, a fact that is extremely useful in our analysis.

In terms of the probability $\rho_N$ measure on $\{1,2,\ldots,N\}$ defined by
\begin{align*}
\rho_{N}(E) = \left(\sum_{n=1}^N \nu_n \E^{-\delta_n t} \right)^{-1} \sum_{n \in E}\nu_n \E^{-\delta_n t},
\end{align*}
and a function $\lambda(j) = \lambda_j$
\begin{align*}
E(t) = \mathrm{Var}_{\rho_N}(\lambda).
\end{align*}
We will also use the alternate expression
\begin{align}\label{e:H1}
E_1(t) = \left( \frac{\ds\sum_{n=1}^{N-1} \nu_n \E^{-\delta_n t}}{\displaystyle 1+ \sum_{n=1}^{N-1} \nu_n \E^{-\delta_n t}}\right)^2 \mathrm{Var}_{\rho_{N-1}}(\lambda).
\end{align}
Additionally,
\begin{align}\label{e:true-error}
\lambda_N - X_{11}(t)   = \half\frac{\displaystyle \sum_{n=1}^N \delta_n\beta_n^2 \E^{2 \lambda_n t}}{\displaystyle 1+\sum_{n=1}^{N-1} \beta_n^2 \E^{2 \lambda_n t}}.
\end{align}

\subsection{The halting time and its approximation}

To aid the reader we provide a glossary to summarize inequalities for parameters and quantities that have previously appeared:
\begin{enumerate}
\item $0 < \sigma < 1$ is fixed,
\item $0 < p < 1/3$,
\item $\alpha \geq 10/3 + \sigma$,
\item $s \leq \min\{\sigma/44,p/8\}$,
\item $\alpha -4/3 - 44s \geq 2$,
\item $c \leq 10/\sigma$ can be chosen for convenience line by line when estimating sums with Lemma~\ref{l:estimate},
\item $\delta_n = 2(\lambda_N - \lambda_n)$,
\item $\nu_n = \beta_n^2/\beta_N^2$,
\item given Condition~\ref{cond:rigidity}
\begin{itemize}
\item $2 N^{-2/3 - s} \leq \delta_{N-1} \leq 2 N^{-2/3+s}$,
\item $N^{-2s} \leq \nu_n \leq N^{2s}$,
\item $\displaystyle \sum_{n=1}^{j} \nu_n \leq \sum_{n=1}^{N} \nu_n = \beta_N^{-2} \leq N^{1+s}$, for $1 \leq j \leq N$, and
\end{itemize}
\item $C > 0$ is a generic constant.
\end{enumerate}

\begin{definition}\label{def:halting}
The halting time (or the $1$-deflation time) for the Toda lattice (compare with \eqref{e:toda-halt}) is defined to be
\begin{align*}
T^{(1)} = \inf\{t: E(t)\leq \epsilon^2\}.
\end{align*}
\end{definition}

We find bounds on the halting time.

\begin{lemma}\label{l:toda-T}
Given Condition~\ref{cond:rigidity}, the halting time $T$ for the Toda lattice satisfies
\begin{align*}
(\alpha - 4/3 - 5s) \log N/\delta_{N-1} \leq T^{(1)} \leq (\alpha - 4/3 + 7s) \log N /\delta_{N-1},
\end{align*}
for sufficiently large $N$.
\end{lemma}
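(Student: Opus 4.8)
The plan is to pin down $T^{(1)}=\inf\{t:E(t)\le\epsilon^2\}$, with $\epsilon^2=N^{-\alpha}$, by sandwiching $E(t)$ between two explicit quantities that \lem{l:estimate} controls; throughout we use only \cond{rigidity}, entering via \lem{l:estimate} and the glossary bounds. Put $S_j(t)=\sum_{n=1}^{N-1}\nu_n\delta_n^j\E^{-\delta_n t}$. Combining the formulas for $E_0,E_1$ from the preceding discussion with the elementary estimate $\mathrm{Var}_{\rho_{N-1}}(\lambda)\le\tfrac14 S_2/S_0$ (second moment about $\lambda_{N-1}$, using $0\le\lambda_{N-1}-\lambda_n\le\tfrac12\delta_n$ for $n\le N-1$), one obtains
\begin{align*}
\frac{S_2(t)}{4\bigl(1+S_0(t)\bigr)^2}\;=\;E_0(t)\;\le\;E(t)\;=\;E_0(t)+E_1(t)\;\le\;\frac{S_2(t)}{4\bigl(1+S_0(t)\bigr)}\;\le\;\frac{S_2(t)}{4}.
\end{align*}
Since $E$ is continuous, it then suffices to prove: (i) $E(t_+)\le\epsilon^2$ with $t_+=(\alpha-4/3+7s)\log N/\delta_{N-1}$, which forces $T^{(1)}\le t_+$; and (ii) $E_0(t)>\epsilon^2$ for every $t\in[0,t_-)$ with $t_-=(\alpha-4/3-5s)\log N/\delta_{N-1}$, which forces $T^{(1)}\ge t_-$.

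For (i) I would apply the upper bound of \lem{l:estimate} with $j=2$ at $t_+$. Writing $\tau_+:=\delta_{N-1}t_+=(\alpha-4/3+7s)\log N$ we have $\E^{-\tau_+}=N^{-(\alpha-4/3+7s)}$, so with $\delta_{N-1}\le 2N^{-2/3+s}$ the term $N^{4s}\delta_{N-1}^2\E^{-\tau_+}$ is $O(N^{-\alpha-s})$. Choosing the free constant in \lem{l:estimate} to be $c=2$ (permissible since $\sigma<1$, hence $2<10/\sigma$), the remaining contribution $N^{1+s}\E^{-(c+1)\tau_+}$ is $O(N^{-\alpha-5/3})$ because $\alpha\ge 10/3$. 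Both are $\le\tfrac12\epsilon^2$ for $N$ large, so $E(t_+)\le\tfrac14 S_2(t_+)\le\epsilon^2$.

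For (ii) I would bound the numerator below by $S_2(t)\ge N^{-2s}\delta_{N-1}^2\E^{-\tau}\ge 4N^{-4/3-4s}\E^{-\tau}$ (here $\tau:=\delta_{N-1}t$; \lem{l:estimate} with $j=2$, together with $\delta_{N-1}\ge 2N^{-2/3-s}$), and the denominator above by $1+S_0(t)\le 3M(\tau)$, where $M(\tau)=\max\{1,\,CN^{4s}\E^{-\tau},\,CN^{1+s}\E^{-(c+1)\tau}\}$ (\lem{l:estimate} with $j=0$, using $M\ge1$). This reduces (ii) to showing $N^{-4/3-4s}\E^{-\tau}\,M(\tau)^{-2}>9N^{-\alpha}$ for $\tau\in[0,\tau_-)$, $\tau_-=(\alpha-4/3-5s)\log N$, which I would verify by three cases according to which term realizes $M(\tau)$: if $M=1$ the left side is $\ge\tfrac19 N^{-4/3-4s}\E^{-\tau}\ge\tfrac19 N^{s-\alpha}$; if $M=CN^{4s}\E^{-\tau}$ (which forces $\tau$ small) the exponentials recombine to $\E^{\tau}\ge1$ and the left side is $\gtrsim N^{-4/3-12s}$; if $M=CN^{1+s}\E^{-(c+1)\tau}$ the exponentials recombine to $\E^{(2c+1)\tau}\ge1$ and the left side is $\gtrsim N^{-10/3-6s}$. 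In each case the exponent exceeds $-\alpha$ by an amount bounded below by a positive multiple of $\sigma$ (using $\alpha\ge10/3+\sigma$ and $s<\sigma/44$), so each bound holds for $N$ large, uniformly in $\alpha=\alpha_N$.

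The delicate point is case (ii) in the range where $S_0(t)$ is still a large power of $N$, i.e.\ $\tau\lesssim\log N$: there the naive estimate $1+S_0\le 1+\sum_n\nu_n\le 2N^{1+s}$ loses a factor $\asymp N^{2}$ and would only give $T^{(1)}\ge t_-$ under the much stronger hypothesis $\alpha>13/3$. Retaining the decaying factor $\E^{-(c+1)\tau}$ in the denominator estimate of \lem{l:estimate}—it enters squared and with a strictly larger rate than the single $\E^{-\tau}$ surviving in the numerator—is exactly what restores the correct threshold, and the freedom to take $c$ as large as $10/\sigma$ is what makes the third case close.
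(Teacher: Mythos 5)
Your proof is correct and genuinely different from the paper's in two places. First, you establish the two-sided sandwich
\begin{align*}
\frac{S_2}{4(1+S_0)^2}=E_0\le E\le\frac{S_2}{4(1+S_0)},\qquad S_j=\sum_{n<N}\nu_n\delta_n^j\E^{-\delta_n t};
\end{align*}
the upper inequality is most transparent as $E=\mathrm{Var}_{\rho_N}(\lambda)\le\mathbb E_{\rho_N}\!\left[(\lambda-\lambda_N)^2\right]=S_2/(4(1+S_0))$, and it disposes of $E_1$ in one stroke, whereas the paper estimates $E_0(t_+)$ and $E_1(t_+)$ separately via \eqref{e:H1} together with the crude bound $\mathrm{Var}_{\rho_{N-1}}(\lambda)\le C$. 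Second, for the lower bound the paper splits $[0,t_-)$ into the two ranges $a\le\sigma/2$ and $\sigma/2\le a\le\alpha-4/3-5s$ (writing $t=a\log N/\delta_{N-1}$) and varies the free constant $c$ in \lem{l:estimate}; you instead keep $c$ fixed and split into three cases according to which of $1$, $CN^{4s}\E^{-\tau}$, $CN^{1+s}\E^{-(c+1)\tau}$ realizes the maximum $M(\tau)$ in your bound $1+S_0\le 3M(\tau)$, so that the exponentials recombine with nonnegative exponent and the thresholds $\alpha-4/3-12s\ge2$ and $\alpha-10/3-6s\ge\tfrac{38}{44}\sigma$ fall out directly. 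Both routes use the same input (\lem{l:estimate} and \cond{rigidity}) and give the same constants. One small inaccuracy in your closing remark: the latitude to take $c$ as large as $10/\sigma$ is not what closes case three---there both $\E^{(2c+1)\tau}\ge1$ and the power $N^{-10/3-6s}$ are independent of $c$, so $c=2$ works throughout, as you in fact use; this is a note about the commentary, not a flaw in the argument.
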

\begin{proof}
We use that $E(t) \geq E_0(t)$ so if $E_0(t) > N^{-\alpha}$ then $T^{(1)} \geq t$.  First, we show that $E_0(t) > \epsilon^2$, $0 \leq t \leq \sigma/2 \log N/\delta_{N-1}$ and sufficiently large $N$ and then we use this to show that $E_0(t) > \epsilon^2$, $t \leq (\alpha-4/3-5s) \log N/\delta_{N-1}$ and sufficiently large $N$.

Indeed, assume $t = a \log N/\delta_{N-1}$ for $0 \leq a \leq \sigma/2$.  Using Lemma~\ref{l:estimate}
\begin{align}\label{e:s1}
1 + \sum_{n=1}^{N-1} \nu_n \E^{-\delta_n t} \leq 1+ C \E^{-\delta_{N-1} t} \left( N^{4s} + N^{1+s} \E^{-c \delta_{N-1} t} \right).
\end{align}
Then using Lemma~\ref{l:estimate} we have
\begin{align*}
E_0(t) \geq  N^{-2s} \delta_{N-1}^2 \E^{-\delta_{N-1}t} \left( 1+ C \E^{-\delta_{N-1} t} \left( N^{4s} + N^{1+s} \E^{-c \delta_{N-1} t} \right) \right)^{-2}.
\end{align*}
Since $a \leq \sigma/2$ and we find
\begin{align*}
E_0(t) \geq N^{-4s-4/3-\sigma/2} \left(1 + C(N^{4s} + N^{1+s})\right)^{-2} \geq C N^{-8s-10/3-\sigma/2},
\end{align*}
for some new constant $C > 0$.  This last inequality follows because $N^{4s} \leq N^{1+s}$ as $s \leq 1/44$ (see Condition~\ref{cond:rigidity}). But then from Definition~\ref{def:scaling} this right-hand side is larger than $\epsilon^2 = N^{-\alpha}$ for sufficiently large $N$.
Now, assume $t = a \log N/\delta_{N-1}$ for $\sigma/2 \leq a \leq (\alpha - 4/3 - 5s) \log N/\delta{N-1}$.  We choose $c = 2(2+s)/\sigma \leq 10/\sigma$
\begin{align*}
E_0(t) &\geq \frac{1}{4} N^{-4s-4/3-a} \left(1 + C(N^{4s-a} + N^{1+s-ca})\right)^{-2} \\
&\geq N^{-\alpha +s}(1 + C(N^{4s-\sigma/2} + N^{-1} ) ) > N^{-\alpha}
\end{align*}
 for sufficiently large $N$.  Here we used that $s \leq \sigma/44$.  This shows
$(\alpha-4/3-5s) \log N/\delta_{N-1} \leq T^{(1)}$ for $N$ sufficiently large.

 Now, we work on the upper bound.   Let $t = a \log N/\delta_{N-1}$ for $a \geq (\alpha -4/3 + 7s)$ and we find using Lemma~\ref{l:estimate}
\begin{align*}
	E_0(t) \leq C N^{-a} \left( N^{-4/3 + 6s} + N^{1+s-ca} \right).
\end{align*}
Then using the minimum value for $a$
\begin{align*}
E_0(t) \leq N^{-\alpha}\left( C(N^{-s} + CN^{1+7s-ca+4/3}) \right).
\end{align*}
It follows from Definition~\ref{def:scaling} that $a \geq 10/3+\sigma-4/3 + 7s > 2$.  If we set $c = 2$ and use $s \leq 1/44$ then $1+7s-ca+4/3 \leq -3 + 4/3 + 7s \leq -2$
\begin{align*}
E_0(t) \leq N^{-\alpha}\left( C(N^{-s} + CN^{-2}) \right) < CN^{-\alpha-s}
\end{align*}
for sufficiently large $N$.

Next, we must estimate $E_1(t)$ when $a \geq (\alpha -4/3 + 7s)$.  We use \eqref{e:H1} and $\mathrm{Var}_{N-1}(\lambda) \leq C$.  Then by \eqref{e:s1}
\begin{align*}
E_1(t) \leq C N^{-2a}(N^{4s}+N^{1+s-ca})^2.
\end{align*}
Again, using $c = 1$ and the fact that $a > 2$ we have
\begin{align}\label{e:H1-est}
E_1(t) \leq C N^{-\alpha} N^{8s-\alpha + 8/3 -14s} \leq C N^{-\alpha} N^{-\alpha+8/3} \leq N^{-\alpha}
\end{align}
for $N$ sufficiently large.  This shows $T^{(1)} \leq (\alpha-4/3+7s)\log N/\delta_{N-1}$ for sufficiently large $N$ as $E(t) = E_0(t) + E_1(t) \leq \epsilon^2$ if $t < (\alpha-4/3+7s)\log N/\delta_{N-1}$ and $N$ is sufficiently large.
\end{proof}
In light of this lemma we define
\begin{align*}
I_\alpha = [(\alpha-4/3-5s) \log N/\delta_{N-1},(\alpha-4/3+7s) \log N/\delta_{N-1}].
\end{align*}

Next, we estimate the derivative of $E_0(t)$.  We find
\begin{align}\label{e:DH0-form}
E_0'(t) = \frac{\ds -\left(\sum_{n=1}^{N-1} \delta_{n}^3 \nu_n \E^{-\delta_{n} t}\right)\left(1+ \sum_{n=1}^{N-1} \nu_n \E^{-\delta_n t}\right) + 2\left(\sum_{n=1}^{N-1} \delta_{n}^2 \nu_n \E^{-\delta_{n} t}\right) \left(\sum_{n=1}^{N-1} \delta_n \nu_n \E^{-\delta_n t}\right)}{\ds\left(1+ \sum_{n=1}^{N-1} \nu_n \E^{-\delta_n t}\right)^3}.
\end{align}
\begin{lemma}\label{l:DH0}
Given Condition~\ref{cond:rigidity} and $t \in I_\alpha$
\begin{align*}
-E_0'(t) \geq C N^{-12s-\alpha-2/3},
\end{align*}
for sufficiently large $N$.
\end{lemma}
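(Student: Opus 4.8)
The plan is to lower-bound $-E_0'(t)$ directly from the explicit formula \eqref{e:DH0-form}, treating the numerator and denominator separately. For the denominator, I would use \eqref{e:s1} together with the fact that on $I_\alpha$ one has $t \geq (\alpha - 4/3 - 5s)\log N/\delta_{N-1}$, so that $\E^{-\delta_{N-1}t} \leq N^{-(\alpha - 4/3 - 5s)}$; combined with $N^{4s} \leq N^{1+s}$ and (after choosing $c$ suitably as in Lemma~\ref{l:toda-T}) $N^{1+s}\E^{-c\delta_{N-1}t}$ being negligible, the denominator $\left(1 + \sum_{n} \nu_n \E^{-\delta_n t}\right)^3$ is $1 + o(1)$ for $N$ large. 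So the whole problem reduces to showing the numerator of $-E_0'(t)$, namely
\[
\left(\sum_{n=1}^{N-1} \delta_n^3 \nu_n \E^{-\delta_n t}\right)\left(1 + \sum_{n=1}^{N-1}\nu_n \E^{-\delta_n t}\right) - 2\left(\sum_{n=1}^{N-1}\delta_n^2 \nu_n \E^{-\delta_n t}\right)\left(\sum_{n=1}^{N-1}\delta_n \nu_n \E^{-\delta_n t}\right),
\]
is bounded below by $C N^{-12s - \alpha - 2/3}$.

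The key observation is that the leading $n = N-1$ contributions should dominate. Keeping only the "$1$" from the factor $1 + \sum \nu_n \E^{-\delta_n t}$, the first product is at least $\sum_{n} \delta_n^3 \nu_n \E^{-\delta_n t} \geq N^{-2s}\delta_{N-1}^3 \E^{-\delta_{N-1}t}$ by the lower bound in Lemma~\ref{l:estimate} with $j = 3$. For this to survive I need to show the subtracted term $2\left(\sum \delta_n^2 \nu_n \E^{-\delta_n t}\right)\left(\sum \delta_n \nu_n \E^{-\delta_n t}\right)$ is smaller. Here I would apply the upper bound in Lemma~\ref{l:estimate} to each factor ($j = 2$ and $j = 1$): each is $\leq C\E^{-\delta_{N-1}t}\left(N^{4s}\delta_{N-1}^j + N^{1+s}\E^{-c\delta_{N-1}t}\right)$. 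On $I_\alpha$, $\delta_{N-1} \asymp N^{-2/3}$ up to $N^{\pm s}$ factors, and $\E^{-c\delta_{N-1}t} \leq N^{-c(\alpha-4/3-5s)}$; choosing $c$ large enough (e.g. $c$ a fixed multiple of $1/\sigma$, which is allowed since $\alpha - 4/3 \geq 2 + \sigma$ puts us comfortably in the regime where $N^{1+s}\E^{-c\delta_{N-1}t} \ll N^{4s}\delta_{N-1}^j$) makes the $N^{1+s}\E^{-c\delta_{N-1}t}$ term negligible compared to $N^{4s}\delta_{N-1}^j$. So the product of the two upper bounds is $\leq C N^{8s}\delta_{N-1}^3 \E^{-2\delta_{N-1}t}$, which carries an extra factor $\E^{-\delta_{N-1}t} \leq N^{-(\alpha - 4/3 - 5s)} = o(1)$ relative to the main term $N^{-2s}\delta_{N-1}^3\E^{-\delta_{N-1}t}$ — precisely because we are already past the halting-time lower bound. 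Hence the numerator is $\geq \frac{1}{2}N^{-2s}\delta_{N-1}^3\E^{-\delta_{N-1}t}$ for large $N$.

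Finally I would collect exponents: on $I_\alpha$, $\delta_{N-1}^3 \geq (2N^{-2/3-s})^3 \geq C N^{-2 - 3s}$, and $\E^{-\delta_{N-1}t} \geq N^{-(\alpha - 4/3 + 7s)}$ since $t \leq (\alpha - 4/3 + 7s)\log N/\delta_{N-1}$. Multiplying, $-E_0'(t) \geq C N^{-2s} \cdot N^{-2-3s} \cdot N^{-(\alpha - 4/3 + 7s)} = C N^{-\alpha - 2/3 - 12s}$, as claimed. The main obstacle is bookkeeping: making sure the choice of $c$ in each application of Lemma~\ref{l:estimate} is consistent and that every error term genuinely carries a spare power of $N^{-(\alpha - 4/3 - 5s)}$ (or better) so that it is dominated for $N$ large; but since $\alpha - 4/3 - 5s > 2 - 5s > 0$ and in fact $\geq 2$ up to a small multiple of $s$, there is ample room, and no delicate cancellation is needed beyond the crude two-term split already used in Lemma~\ref{l:toda-T}.
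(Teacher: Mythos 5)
Your proof is correct and follows essentially the same route as the paper's: both start from the explicit formula \eqref{e:DH0-form}, lower bound the positive part of the numerator by the single term $N^{-2s}\delta_{N-1}^3\E^{-\delta_{N-1}t}$ via Lemma~\ref{l:estimate}, upper bound the subtracted product using the same lemma (the paper fixes $c=2$), observe that the subtracted term carries a spare $\E^{-\delta_{N-1}t}\ll 1$ and is therefore dominated, and then collect exponents using $t\in I_\alpha$. The only cosmetic difference is that you note the cube of the denominator is $1+o(1)$ (which is the bound actually needed to pass from the numerator to $-E_0'(t)$), whereas the paper simply remarks that it is bounded; both are harmless since the denominator is clearly bounded above by a constant on $I_\alpha$.
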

\begin{proof}
We use \eqref{e:DH0-form}.  The denominator is bounded below by unity so we estimate the numerator. By Lemma~\ref{l:estimate}
\begin{align*}
\left(\sum_{n=1}^{N-1} \delta_{n}^3 \nu_n \E^{-\delta_{n} t}\right)\left(1+ \sum_{n=1}^{N-1} \nu_n \E^{-\delta_n t}\right) \geq \sum_{n=1}^{N-1} \delta_{n}^3 \nu_n \E^{-\delta_{n} t} \geq N^{-2s} \delta^3_{N-1} \E^{-\delta_{N-1}t}.
\end{align*}
For $t \in I_\alpha$
\begin{align*}
N^{-2s} \delta^3_{N-1} \E^{-\delta_{N-1}t} \geq N^{-12s-2/3-\alpha}.
\end{align*}
Next, again by Lemma~\ref{l:estimate}
\begin{align*}
\left(\sum_{n=1}^{N-1} \delta_{n} \nu_n \E^{-\delta_{n} t}\right)\left(\sum_{n=1}^{N-1} \delta_{n}^2 \nu_n \E^{-\delta_{n} t}\right) \leq C \E^{-2\delta_{N-1}t}\left(  N^{4s} \delta_{N-1}^2 + N^s \E^{-c\delta_{N-1} t} \right)\left( N^{4s} \delta_{N-1} + N^{1+s} \E^{-c\delta_{N-1} t} \right).
\end{align*}
Then estimate with $c = 2$,
\begin{align*}
N^{4s} \delta_{N-1}^2 + N^s \E^{-c\delta_{N-1} t} &\leq 4N^{6s-4/3} + N^{s-4} \leq C N^{6s-4/3},\\
N^{4s} \delta_{N-1} + N^s \E^{-c\delta_{N-1} t} &\leq 2N^{4s-2/3} + N^{s-4} \leq C N^{4s-2/3},
\end{align*}
where we used $t \geq 2 \log N/\delta_{N-1}$ and $s \leq 1/44$.  Further, $\E^{-2\delta_{N-1}t} \leq N^{-\alpha} N^{8/3-\alpha+10s} \leq N^{-\alpha-2/3-\sigma+10s}$ as $s \leq \sigma/44$.  Then
\begin{align*}
-E_0(t) \geq N^{-12s-2/3-\alpha}-CN^{-\alpha-2/3-\sigma + 10s},
\end{align*}
provided that this is positive.  Indeed,
\begin{align*}
-E_0(t) \geq N^{-12s-2/3-\alpha}(1-C N^{-\sigma + 22s}) \geq 0,
\end{align*}
for $N$ sufficiently large as $s \leq \sigma/44$.
\end{proof}

Now we look at the leading-order behavior of $E_0(t)$:
\begin{align}\label{e:E0}
E_0(t) =  \frac{1}{4}\delta_{N-1}^2 \nu_{N-1} \E^{-\delta_{N-1}t} \frac{1 + \ds \sum_{n=1}^{N-2} \frac{\delta^2_n}{\delta_{N-1}^2} \frac{\nu_n}{\nu_{N-1}}\E^{-(\delta_n-\delta_{N-1}) t} }{\ds \left( 1 +  \sum_{n=1}^{N-1} \nu_n \E^{-\delta_n t} \right)^2}.
\end{align}
Define $T^*$ by
\begin{align}
\frac{1}{4}\delta_{N-1}^2 \nu_{N-1} \E^{-\delta_{N-1} T^*} &= N^{-\alpha},\notag\\
T^* &= \frac{\alpha \log N + 2 \log \delta_{N-1} + \log \nu_{N-1}-2\log 2}{\delta_{N-1}}.\label{e:t-star}
\end{align}
\begin{lemma}\label{l:t-star} Given Condition~\ref{cond:rigidity}
\begin{align*}
(\alpha -4/3 -4s)\log N/\delta_{N-1} \leq T^* \leq (\alpha -4/3 +4s) \log N/\delta_{N-1}.
\end{align*}
\end{lemma}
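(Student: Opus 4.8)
The plan is to substitute directly into the closed form \eqref{e:t-star} for $T^*$, which, after multiplying through by $\delta_{N-1}$, reads
\[
\delta_{N-1} T^* = \alpha \log N + 2 \log \delta_{N-1} + \log \nu_{N-1} - 2 \log 2 .
\]
First I would invoke the consequences of Condition~\ref{cond:rigidity} already recorded in the glossary, namely $2 N^{-2/3-s} \le \delta_{N-1} \le 2 N^{-2/3+s}$ and $N^{-2s} \le \nu_{N-1} \le N^{2s}$ (the positivity $\delta_{N-1}>0$ is itself part of Condition~\ref{cond:rigidity}.3 with $n=N$). Taking logarithms yields $\log 2 + (-2/3-s)\log N \le \log \delta_{N-1} \le \log 2 + (-2/3+s)\log N$ and $-2s \log N \le \log \nu_{N-1} \le 2s \log N$, hence
\[
(-4/3 - 2s)\log N \;\le\; 2\log\delta_{N-1} - 2\log 2 \;\le\; (-4/3+2s)\log N .
\]

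Next I would plug these two two-sided bounds into the displayed identity for $\delta_{N-1}T^*$, obtaining
\[
(\alpha - 4/3 - 4s)\log N \;\le\; \delta_{N-1}T^* \;\le\; (\alpha - 4/3 + 4s)\log N ,
\]
and then divide by $\delta_{N-1}>0$ to get exactly the claimed sandwich. No passage to large $N$ is required.

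There is no genuine obstacle here: the estimate is a one-line substitution. The only point demanding care is the bookkeeping of the constant term $-2\log 2$ against the $\log 2$ generated by $\log \delta_{N-1}$; these cancel exactly, which is why the error terms collapse to the clean $\pm 4s\log N$ with no surviving $O(1)$ remainder. For later use it is worth observing that this same computation places $T^*$ inside the interval $I_\alpha$ of Lemma~\ref{l:toda-T} (with room to spare, since $4s < 5s$ and $4s < 7s$), so that $T^*$ is a legitimate proxy for the true halting time $T^{(1)}$.
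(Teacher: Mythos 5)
Your proof is correct and matches the paper's approach exactly: the paper's proof consists of the single sentence that the claim "follows immediately" from the two-sided bounds on $\nu_{N-1}$ and $\delta_{N-1}$ listed in the glossary, which is precisely the substitution you carry out. Your added bookkeeping (the cancellation of the $-2\log 2$ against the $\log 2$ from $\log\delta_{N-1}$) correctly accounts for why the error is cleanly $\pm 4s\log N$.
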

\begin{proof}
This follows immediately from the statements
\begin{align*}
N^{-2s} \leq &\nu_{N-1} \leq N^{2s},\\
2N^{-2/3-s} \leq &\delta_{N-1} \leq 2N^{-2/3+s}.
\end{align*}
\end{proof}

Thus, given Condition~\ref{cond:rigidity}, $T^* \in I_\alpha$.  The quantity that we want to estimate is $N^{-2/3}|T-T^*|$.  And we do this by considering the formula
\begin{align*}
E_0(T^{(1)}) - E_0(T^*) = E'_0(\eta)(T^{(1)}-T^*), \quad \text{for some } \eta \in I_\alpha.
\end{align*}
And because $E_0$ is monotone in $I_\alpha$, $E_0(T^{(1)}) = E(T^{(1)}) - E_1(T^{(1)}) = N^{-\alpha} - E_1(T^{(1)})$ we have
\begin{align}\label{e:T-Tstar}
|T^{(1)}-T^*| \leq \frac{|N^{-\alpha} - E_0(T^*) - E_1(T^{(1)})|}{\ds\min_{\eta \in I_\alpha} |E_0'(\eta)|}\leq \frac{|N^{-\alpha} - E_0(T^*)| + \max_{\eta \in I_\alpha}|E_1(\eta)|}{\ds\min_{\eta \in I_\alpha} |E_0'(\eta)|}.
\end{align}
See Figure~\ref{f:H} for a schematic of $E_0, E, T^{(1)}$ and $T^*$.
\begin{figure}[t]
\centerline{\includegraphics[width=.75\linewidth]{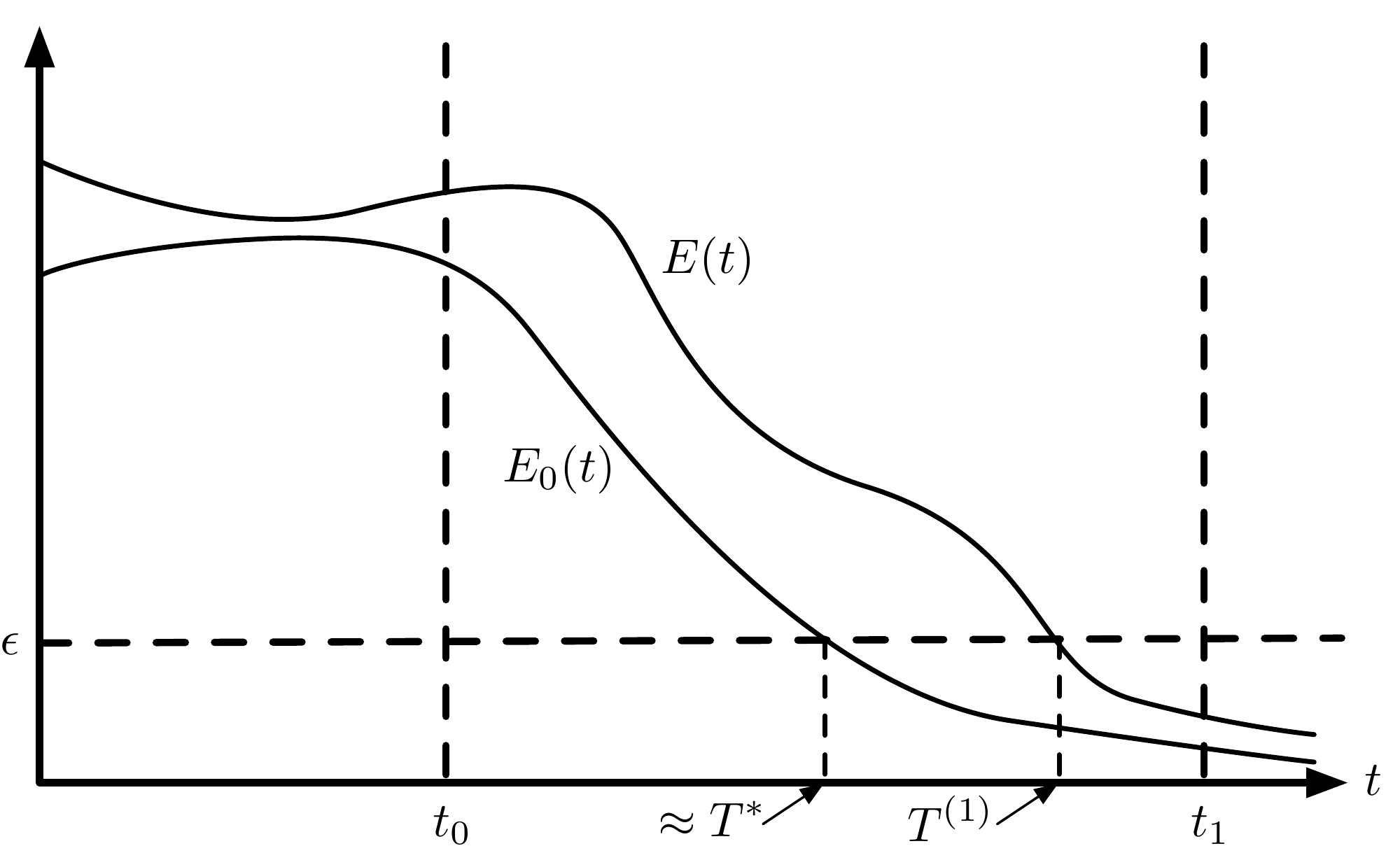}}
\caption{\label{f:H} A schematic for the relationship between the functions $E_0(t)$, $E(t)$ and the times $T^{(1)}$ and $T^*$.  Here $t_0 = (\alpha-4/3-5s) \log N/\delta_{N-1}$ and $t_1 = (\alpha-4/3+7s)\log N/\delta_{N-1}$.  Note that $E_0$ is monotone on $[t_0,t_1]$.}
\end{figure}

Since we already have an adequate estimate on $E_1(T)$ in \eqref{e:H1-est}, it remains to estimate $|N^{-\alpha} - E_0(T^*)|$.
\begin{lemma}\label{l:H0Tstar}
Given Conditions~\ref{cond:gap} and \ref{cond:rigidity}
\begin{align*}
|E_0(T^*) - N^{-\alpha}| \leq CN^{-\alpha-2p+4s}.
\end{align*}
\end{lemma}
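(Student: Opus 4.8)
The plan is to substitute $t=T^{*}$ into \eqref{e:E0} and exploit the defining relation \eqref{e:t-star}. Since $\frac{1}{4}\delta_{N-1}^{2}\nu_{N-1}\E^{-\delta_{N-1}T^{*}}=N^{-\alpha}$, equation \eqref{e:E0} becomes
\[
E_0(T^{*})=N^{-\alpha}\,\frac{1+A}{(1+B)^{2}},\qquad
A=\sum_{n=1}^{N-2}\frac{\delta_n^{2}\nu_n}{\delta_{N-1}^{2}\nu_{N-1}}\E^{-(\delta_n-\delta_{N-1})T^{*}},\qquad
B=\sum_{n=1}^{N-1}\nu_n\E^{-\delta_nT^{*}},
\]
with $A,B\ge 0$. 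Since $(1+B)^{2}\ge 1$ this gives $|E_0(T^{*})-N^{-\alpha}|=N^{-\alpha}(1+B)^{-2}|A-2B-B^{2}|\le N^{-\alpha}(A+2B+B^{2})$, so I would reduce the lemma to the two bounds $A\le CN^{-2p+4s}$ and $B\le CN^{-2p+4s}$ (the latter also giving $B^{2}\le B\le CN^{-2p+4s}$).

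Bounding $B$ is routine. The $n=N-1$ term equals $4N^{-\alpha}/\delta_{N-1}^{2}\le N^{-\alpha+4/3+2s}$ by the glossary bound $\delta_{N-1}\ge 2N^{-2/3-s}$, and this is $\le N^{-2p+4s}$ because $\alpha\ge 10/3+\sigma$. For $n\le N-2$, Condition~\ref{cond:gap} gives $\delta_n\ge(1+p)\delta_{N-1}$, so $\E^{-\delta_nT^{*}}\le\E^{-(1+p)\delta_{N-1}T^{*}}$; summing against $\sum_n\nu_n\le N^{1+s}$ and using $\delta_{N-1}T^{*}\ge(\alpha-4/3-4s)\log N$ (immediate from \eqref{e:t-star} with $\delta_{N-1}\ge 2N^{-2/3-s}$ and $\nu_{N-1}\ge N^{-2s}$) bounds this part by $N^{1+s}\E^{-(1+p)\delta_{N-1}T^{*}}\le N^{-2p+4s}$, again because $\alpha-4/3\ge 2+\sigma$.

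The hard part will be $A$, and the main obstacle is that the crude estimate of Lemma~\ref{l:estimate} type — replacing each $\delta_n$ by its smallest value $(1+p)\delta_{N-1}$ and the $\nu$-sum by $N^{1+s}$ — is useless here: once one divides by $\delta_{N-1}^{2}\nu_{N-1}\E^{-\delta_{N-1}T^{*}}=4N^{-\alpha}$, that bound grows like $N^{7/3}$. I would instead separate the few indices for which $\delta_n$ is comparable to $\delta_{N-1}$ from the bulk. By Lemma~\ref{l:Ic} with $c=2$ (admissible since $\sigma<1$), at most $N^{2s}$ indices satisfy $\delta_n<3\delta_{N-1}$, and all of them have $n\le N-2$; for such $n$ one has $\delta_n^{2}/\delta_{N-1}^{2}<9$, $\nu_n/\nu_{N-1}=\beta_n^{2}/\beta_{N-1}^{2}\le N^{2s}$ from the eigenvector bounds in Condition~\ref{cond:rigidity} (combining the two components rather than bounding $\nu_n$ and $\nu_{N-1}^{-1}$ separately, in order to reach exactly the exponent $4s$), and $\E^{-(\delta_n-\delta_{N-1})T^{*}}\le\E^{-p\delta_{N-1}T^{*}}\le N^{-p(\alpha-4/3-4s)}\le N^{-2p}$. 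Here Condition~\ref{cond:gap} produces the factor $p\delta_{N-1}$ in the exponent and the scaling-region inequality $\alpha-4/3-4s\ge 2$ turns it into $N^{-2p}$ — this is the source of the $-2p$ in the statement. Multiplying through, this near-edge block of $A$ is $\le CN^{4s-2p}$. For the remaining indices $\delta_n\ge 3\delta_{N-1}$, so $\delta_n-\delta_{N-1}\ge 2\delta_{N-1}$; pulling $\E^{-2\delta_{N-1}T^{*}}$ out and bounding the residual sum crudely by $\sum_n\delta_n^{2}\nu_n/(\delta_{N-1}^{2}\nu_{N-1})\le CN^{7/3+5s}$ (using $\delta_n\le 2(\lambda_N-\lambda_1)\le C$ as in Lemma~\ref{l:estimate}), this block is $\le CN^{7/3+5s}\E^{-2\delta_{N-1}T^{*}}\le CN^{-5/3-2\sigma+13s}$, which is negligible. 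Hence $A\le CN^{4s-2p}$.

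Putting the pieces together gives $|E_0(T^{*})-N^{-\alpha}|\le N^{-\alpha}\cdot CN^{-2p+4s}=CN^{-\alpha-2p+4s}$ for $N$ sufficiently large. The only genuinely delicate step is the near-edge block of $A$, where the gap hypothesis Condition~\ref{cond:gap} enters essentially and where the bookkeeping of the small parameter $s$ must be done with care.
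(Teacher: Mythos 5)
Your proof is correct and follows essentially the same route as the paper: rewrite $E_0(T^*)$ via \eqref{e:E0} and \eqref{e:t-star} to get $N^{-\alpha}(1+A)/(1+B)^2$, reduce to $|A-2B-B^2|\le A+2B+B^2$, control $B$ (the paper does this in one stroke with Lemma~\ref{l:estimate}, you decompose by hand — both give $B\ll N^{-2p+4s}$), and then split $A$ over $\hat I_c$ versus $\hat I_c^c$ using Lemma~\ref{l:Ic}, with Condition~\ref{cond:gap} supplying the crucial factor $\E^{-p\delta_{N-1}T^*}\le N^{-2p}$ on the near-edge block and the crude $N^{7/3+O(s)}\E^{-2\delta_{N-1}T^*}$ bound killing the rest. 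Your bookkeeping (e.g.\ $\nu_n/\nu_{N-1}=\beta_n^2/\beta_{N-1}^2\le N^{2s}$, combined with $|\hat I_c^c|\le N^{2s}$ to land exactly on the exponent $4s$) matches the paper's; the only cosmetic differences are the choice $c=2$ versus $c=3$ in the splitting and the slightly off remark that all indices with $\delta_n<3\delta_{N-1}$ satisfy $n\le N-2$ (the index $n=N-1$ also does, but it simply does not appear in the sum defining $A$, so nothing is affected).
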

\begin{proof}
From \eqref{e:E0} and \eqref{e:t-star} we obtain
\begin{align*}
|E_0(T^*) - N^{-\alpha}| = N^{-\alpha}\frac{ \left|\ds \sum_{n=1}^{N-2} \frac{\delta^2_n}{\delta_{N-1}^2} \frac{\nu_n}{\nu_{N-1}}\E^{-(\delta_n-\delta_{N-1}) T^*} -2 \sum_{n=1}^{N-1} \nu_n \E^{-\delta_n T^*} - \left(\sum_{n=1}^{N-1} \nu_n \E^{-\delta_n T^*}\right)^2 \right|}{\ds \left( 1 +  \sum_{n=1}^{N-1} \nu_n \E^{-\delta_n T^*} \right)^2}.
\end{align*}
We estimate the terms in the numerator individually using the bounds on $T^*$.  For $c =1$, we use that $\alpha-4/3-4s > 2$  and Lemma~\ref{l:estimate} to find
\begin{align*}
\sum_{n=1}^{N-1} \nu_n \E^{-\delta_n T^*} \leq C N^{-\alpha + 4/3 + 4s} \left( N^{4s} + N^{1+s - 2c} \right) \leq CN^{-2 - \sigma + 8s} \leq N^{-2}
\end{align*}
for sufficiently large $N$.  Then we consider the first term the numerator using the index set $I_c$ and Condition~\ref{cond:gap}.  Since our sum is now up to $N-2$ we define $\hat I_c = I_c \cap\{1,\ldots,N-2\}$ and $\hat I_c^c$ to denote the compliment relative to $\{1,\ldots,N-2\}$.  Continuing,
\begin{align*}
\ell(T^*):=\sum_{n=1}^{N-2} \frac{\delta^2_n}{\delta_{N-1}^2} \frac{\nu_n}{\nu_{N-1}}\E^{-(\delta_n-\delta_{N-1}) T^*} = \left(\sum_{n \in \hat I_c^c} +  \sum_{n \in \hat I_c}  \right) \frac{\delta^2_n}{\delta_{N-1}^2} \frac{\nu_n}{\nu_{N-1}}\E^{-(\delta_n-\delta_{N-1}) T^*}
\end{align*}
For $n \in \hat I_c^c$, $\delta_n^2/\delta_{N-1}^2 \leq (1+c)^2$ and
\begin{align*}
\delta_{n}-\delta_{N-1} = 2(\lambda_{N-1}-\lambda_n) \geq 2(\lambda_{N-1}-\lambda_{N-2}) \geq p \delta_{N-1},
\end{align*}
using Condition~\ref{cond:gap}. On the other hand for $n \in \hat I_c$, $\delta_{n} > (1+c) \delta_{N-1}$, and if $c = 3$
\begin{align*}
\delta_{n}-\delta_{N-1} > c\delta_{N-1}  = p\delta_{N-1} + (c-p)\delta_{N-1} \geq p\delta_{N-1} +2\delta_{N-1},
\end{align*}
as $p < 1/3$ and hence $c> 2 + p$.  Using Lemma~\ref{l:Ic} to estimate $|\hat I_c^c|$
\begin{align*}
\sum_{n \in \hat I_c^c} \frac{\delta^2_n}{\delta_{N-1}^2} \frac{\nu_n}{\nu_{N-1}}\E^{-(\delta_n-\delta_{N-1}) T^*} &\leq (1+c)^2 N^{4s} \E^{-p\delta_{N-1} T^*},\\
\sum_{n \in \hat I_c} \frac{\delta^2_n}{\delta_{N-1}^2} \frac{\nu_n}{\nu_{N-1}}\E^{-(\delta_n-\delta_{N-1}) T^*} &\leq [\max_n \delta_n^2]N^{7/3+3s} \E^{-(p + 2) \delta_{N-1}T^*}.
\end{align*}
Given Condition~\ref{cond:rigidity} $[\max_n \delta_n^2] \leq 4(b_V-a_V + 1)^2$ and hence for some $C > 0$, using that $\alpha -4/3-4s > 2$
\begin{align*}
\ell(T^*) &\leq C \E^{-p \delta_{N-1} T^*} \left(N^{4s} + N^{7/3+3s} \E^{- 2\delta_{N-1} T^*}  \right) \\
& \leq C N^{-p(\alpha-4/3-4s)}( N^{4s} + N^{7/3+ 3s - 2(\alpha -4/3 - 4s)})\\
& \leq C N^{-p(\alpha-4/3-4s)}( N^{4s} + N^{-5/3+ 3s})\\
& \leq C N^{-2p + 4s}( 1+ N^{-5/3 -s}).
\end{align*}
Thus
\begin{align*}
\ell(T^*) &\leq C N^{-2p + 4s}.
\end{align*}
From this it follows that
\begin{align*}
|E_0(T^*) - N^{-\alpha}| \leq CN^{-\alpha-2p+4s}.
\end{align*}
\end{proof}
\begin{lemma}\label{l:o1}
Given Conditions~\ref{cond:gap} and \ref{cond:rigidity}, $\sigma$ and $p$ fixed and $s < \min\{\sigma/44,p/8\}$
\begin{align*}
N^{-2/3}|T^{(1)}-T^*| \leq C N^{-2p+16s} \to 0, \quad \text{as } N \to \infty.
\end{align*}
\end{lemma}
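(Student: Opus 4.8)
The plan is to feed the three estimates already in hand into the deterministic bound \eqref{e:T-Tstar}, namely
\begin{align*}
|T^{(1)}-T^*| \leq \frac{|N^{-\alpha} - E_0(T^*)| + \max_{\eta \in I_\alpha}|E_1(\eta)|}{\ds\min_{\eta \in I_\alpha} |E_0'(\eta)|},
\end{align*}
and then simply track exponents. The denominator is bounded below by Lemma~\ref{l:DH0}, which gives $\min_{\eta \in I_\alpha}|E_0'(\eta)| \geq C N^{-12s-\alpha-2/3}$. For the numerator, Lemma~\ref{l:H0Tstar} yields $|N^{-\alpha}-E_0(T^*)| \leq C N^{-\alpha-2p+4s}$, and the $E_1$ term is controlled via the estimate \eqref{e:H1-est} obtained in the proof of Lemma~\ref{l:toda-T}: every $\eta \in I_\alpha$ has the form $a\log N/\delta_{N-1}$ with $a \geq \alpha-4/3-5s > 2$, so the same computation bounds $E_1(\eta)$ by $C N^{-2\alpha+8/3}$ up to $N^s$-corrections, which (using $\alpha \geq 10/3+\sigma$ and $s < \sigma/44$) is $O(N^{-\alpha-2/3})$ and hence negligible next to the $E_0$ contribution since $p < 1/3$.

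Assembling these, the numerator is $O(N^{-\alpha-2p+4s})$, and dividing by the lower bound on the denominator gives
\begin{align*}
|T^{(1)}-T^*| \leq C\, N^{-\alpha-2p+4s+12s+\alpha+2/3} = C\, N^{2/3-2p+16s}.
\end{align*}
Multiplying through by $N^{-2/3}$ produces $N^{-2/3}|T^{(1)}-T^*| \leq C N^{-2p+16s}$; the hypothesis $s < p/8$ forces $16s < 2p$, so the exponent $-2p+16s$ is strictly negative and the right-hand side tends to $0$ as $N \to \infty$, which is exactly the claim.

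There is no genuine analytical obstacle here — the lemma is essentially an exercise in combining Lemmas~\ref{l:DH0}, \ref{l:H0Tstar}, and the $E_1$ bound from Lemma~\ref{l:toda-T}. The only point that requires care is the bookkeeping of the powers of $N$: one must verify that the $N^{-\alpha-2p+4s}$ term truly dominates both the $E_1$ contribution and any stray $N^s$ corrections in the numerator, and that the $N^{-\alpha}$ factors cancel between numerator and denominator so that precisely a factor $N^{2/3}$ remains to be absorbed by the prefactor $N^{-2/3}$. Throughout, one uses only the standing inequalities in the glossary together with $0<p<1/3$ and $s < \min\{\sigma/44,\,p/8\}$.
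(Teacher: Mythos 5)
Your argument is the paper's own: plug Lemma~\ref{l:DH0}, Lemma~\ref{l:H0Tstar}, and an extension of \eqref{e:H1-est} to all of $I_\alpha$ into the bound \eqref{e:T-Tstar}, then track exponents to obtain $N^{-2/3}|T^{(1)}-T^*|\le CN^{-2p+16s}$. The only cosmetic difference is that the paper records the sharper extension $E_1(t)\le N^{-\alpha-2/3-\sigma/2}$ on $I_\alpha$, whereas you use the slightly weaker $O(N^{-\alpha-2/3})$; since $2p-4s<2/3$ that still makes the $E_1$ term negligible against $|N^{-\alpha}-E_0(T^*)|$, so the conclusion is unaffected.
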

\begin{proof}
Combining Lemmas~\ref{l:DH0} and \ref{l:H0Tstar} with \eqref{e:H1-est}, which can be extended to give $E_1(t) \leq N^{-\alpha - 2/3 - \sigma/2}$, and \eqref{e:T-Tstar} we have for sufficiently large $N$
\begin{align*}
N^{-2/3}|T^{(1)}-T^*| \leq CN^{-2/3}N^{\alpha+12s+2/3}\left(N^{-\alpha-2p+4s} +  N^{-\alpha} N^{-2/3-\sigma/2}\right) \leq C\left(N^{-2p+16s} +  N^{-\sigma/2 + 12s}\right),
\end{align*}
where we used $\alpha-8/3 > 2/3$.  Since $p < 1/3$ the right-hand side is bounded by $C N^{-2p + 16s}$ which goes to zero as $N \to \infty$ provided that $s < p/8$, $p < \sigma/4$.
\end{proof}
From \eqref{e:true-error}, we have
\begin{align*}
|\lambda_{N}-X_{11}(t)|  = \half \frac{\ds\sum_{n=1}^{N-1} \delta_n \nu_n \E^{-\delta_nt}}{\ds1+\sum_{n=1}^{N-1} \nu_n \E^{-\delta_nt}} \leq \half\sum_{n=1}^{N-1} \delta_n \nu_n \E^{-\delta_nt}.
\end{align*}
\begin{lemma}\label{l:error}
Given Condition~\ref{cond:rigidity}, $\sigma$ and $p$ fixed and $s < \min\{\sigma/44,p/8\}$
\begin{align*}
\epsilon^{-1}|\lambda_N -X_{11}(T^{(1)})| = N^{\alpha/2}|\lambda_N -X_{11}(T^{(1)})| \leq CN^{-1}
\end{align*}
for sufficiently large $N$.
\end{lemma}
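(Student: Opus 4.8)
The plan is to bound $|\lambda_N - X_{11}(T^{(1)})|$ directly from the explicit formula displayed just before the statement, evaluated at the halting time, using only the lower bound on $T^{(1)}$ together with the sum estimate of Lemma~\ref{l:estimate}. First I would start from
\begin{align*}
|\lambda_N - X_{11}(t)| \leq \half \sum_{n=1}^{N-1} \delta_n \nu_n \E^{-\delta_n t},
\end{align*}
and observe that each $\E^{-\delta_n t}$ is decreasing in $t$, so it suffices to evaluate the right-hand side at the \emph{smallest} value $T^{(1)}$ is allowed to take. By Lemma~\ref{l:toda-T} (which requires only \cond{rigidity}) we have $T^{(1)} \geq (\alpha - 4/3 - 5s)\log N/\delta_{N-1}$, hence $\E^{-\delta_{N-1} T^{(1)}} \leq N^{-(\alpha - 4/3 - 5s)}$, and more generally $\E^{-c\delta_{N-1}T^{(1)}} \leq N^{-c(\alpha-4/3-5s)}$ for any $c > 0$.

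Next I would apply the upper bound in Lemma~\ref{l:estimate} with $j = 1$ and a convenient choice $c = 1$, obtaining
\begin{align*}
\sum_{n=1}^{N-1} \nu_n \delta_n \E^{-\delta_n T^{(1)}} \leq C\,\E^{-\delta_{N-1}T^{(1)}}\left( N^{4s}\delta_{N-1} + N^{1+s}\E^{-\delta_{N-1}T^{(1)}} \right),
\end{align*}
and then substitute $\delta_{N-1} \leq 2N^{-2/3+s}$ from \cond{rigidity} together with $\E^{-\delta_{N-1}T^{(1)}} \leq N^{-(\alpha-4/3-5s)}$. Since $\alpha \geq 10/3 + \sigma$ forces $\alpha - 4/3 - 5s > 2$, one checks that inside the parentheses the first term $N^{5s-2/3}$ dominates the second term $N^{7/3+6s-\alpha}$; collecting the exponents yields $\sum_{n=1}^{N-1}\nu_n\delta_n\E^{-\delta_n T^{(1)}} \leq C N^{-\alpha + 2/3 + 10s}$ for $N$ sufficiently large.

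Finally, multiplying by $\epsilon^{-1} = N^{\alpha/2}$ gives $\epsilon^{-1}|\lambda_N - X_{11}(T^{(1)})| \leq C N^{-\alpha/2 + 2/3 + 10s}$, and since $\alpha/2 \geq 5/3 + \sigma/2$ while $10s \leq 10\sigma/44 < \sigma/2$, the exponent is at most $-1 - \sigma/2 + 10s < -1$, so the quantity is bounded by $CN^{-1}$ for $N$ large. I do not expect any genuine obstacle here beyond the bookkeeping of exponents; the one point that needs a little care is the choice of $c$ in Lemma~\ref{l:estimate} ensuring the $N^{1+s}$ contribution is absorbed — which is precisely where the scaling-region hypothesis $\alpha \geq 10/3 + \sigma$ and the smallness constraint $s \leq \sigma/44$ get used.
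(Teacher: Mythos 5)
Your proof is correct and follows essentially the same route as the paper: bound $|\lambda_N - X_{11}(t)|$ by $\tfrac12\sum_n\delta_n\nu_n\E^{-\delta_n t}$, apply Lemma~\ref{l:estimate} with $j=1$, $c=1$, and then use the lower bound on $T^{(1)}$ from Lemma~\ref{l:toda-T} together with $\delta_{N-1}\leq 2N^{-2/3+s}$ and $\alpha\geq 10/3+\sigma$ to collect exponents. The paper's version is just terser; you have simply spelled out the bookkeeping that the authors compress into one line.
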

\begin{proof}
We use Lemma~\ref{l:estimate}  with $c = 1$.  By \ref{l:toda-T} we have
\begin{align*}
|\lambda_N -X_{11}(T^{(1)})| \leq  CN^{-\alpha+4/3+5s}(N^{-2/3+5s} +N^{-1+s}) \leq C N^{-\alpha/2} N^{-1}
\end{align*}
because $\alpha - 4/3 -5s\geq 2$.

\end{proof}

\section{Adding probability}\label{s:prob}

We now use the probabilistic facts about Conditions~\ref{cond:rigidity} and \ref{cond:gap} as stated in Theorems~\ref{t:generic} and \ref{t:p} to understand $T^{(1)}$ and $T^*$ as random variables.
\begin{lemma}\label{l:tts}
  For $\alpha \geq 10/3 + \sigma$ and $\sigma > 0$
  \begin{align*}
    \frac{|T^{(1)}-T^*|}{N^{2/3}}
  \end{align*}
  converges to zero in probability as $N \to \infty$.
\end{lemma}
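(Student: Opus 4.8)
The plan is to combine the purely deterministic estimate of \lem{l:o1} with the two probabilistic inputs, Theorems~\ref{t:generic} and \ref{t:p}. Fix $\eta > 0$ and $\varrho > 0$; it suffices to show $\mathbb P\big(N^{-2/3}|T^{(1)}-T^*| > \eta\big) < \varrho$ for all $N$ sufficiently large. (On the event $R_{N,s}$ one has $\beta_N,\beta_{N-1} > 0$ and $\lambda_N > \lambda_{N-1}$, so $\nu_{N-1}$, $\delta_{N-1}$, $T^*$ and $T^{(1)}$ are all well defined there; off that event — of vanishing probability — the difference may be set arbitrarily.)

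First I would use Theorem~\ref{t:p} to pin down the gap parameter. Since $\lim_{p\downarrow 0}\limsup_{N\to\infty}\mathbb P(G_{N,p}^c)=0$, there exists $p$ with $0 < p < \min\{1/3,\sigma/4\}$ such that $\limsup_{N\to\infty}\mathbb P(G_{N,p}^c) < \varrho/2$, and hence $\mathbb P(G_{N,p}^c) < \varrho/2$ for all $N$ large. With this $p$ now fixed, choose $s$ with $0 < s < \min\{\sigma/44,\,p/8\}$, so that the exponent $-2p+16s$ is strictly negative. By Theorem~\ref{t:generic}, $\mathbb P(R_{N,s}^c)=o(1)$, so $\mathbb P(R_{N,s}^c) < \varrho/4$ for all $N$ large.

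On the event $R_{N,s}\cap G_{N,p}$ both \cond{gap} and \cond{rigidity} hold, so \lem{l:o1} applies and gives $N^{-2/3}|T^{(1)}-T^*| \le C N^{-2p+16s}$; because $-2p+16s < 0$, the right-hand side is below $\eta$ once $N$ is large. Hence, for all sufficiently large $N$,
\begin{align*}
\mathbb P\big(N^{-2/3}|T^{(1)}-T^*| > \eta\big) \le \mathbb P(R_{N,s}^c) + \mathbb P(G_{N,p}^c) + \mathbb P\Big(\{N^{-2/3}|T^{(1)}-T^*| > \eta\}\cap R_{N,s}\cap G_{N,p}\Big),
\end{align*}
where the last term vanishes for $N$ large and the first two are bounded by $\varrho/4+\varrho/2<\varrho$. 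As $\eta$ and $\varrho$ are arbitrary, $N^{-2/3}|T^{(1)}-T^*|\to 0$ in probability.

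The only delicate point — the "obstacle", such as it is — is the order in which parameters are frozen: the deterministic rate $N^{-2p+16s}$ of \lem{l:o1} deteriorates as $p\downarrow 0$, while Theorem~\ref{t:p} only controls $\mathbb P(G_{N,p}^c)$ in that same limit. This is resolved precisely by choosing $p$ first (small but fixed, depending on $\varrho$), then $s$, and only then letting $N\to\infty$; for each such fixed $p$ the bound $N^{-2p+16s}$ still tends to $0$, so the estimate closes. All the genuine analytic content has already been discharged in Lemmas~\ref{l:Ic}--\ref{l:o1} and in the cited random matrix theory inputs, so no new computation is needed here.
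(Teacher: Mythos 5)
Your argument is correct and follows the same route as the paper: decompose on $G_{N,p}\cap R_{N,s}$ versus its complement, apply Lemma~\ref{l:o1} deterministically on the good event, and control the bad event via Theorem~\ref{t:generic} and Theorem~\ref{t:p}. The only cosmetic difference is that you fix $p$ first via an $\eta$--$\varrho$ argument, whereas the paper works with arbitrary $p$ and sends $p\downarrow 0$ at the very end; the two orderings of limits are interchangeable here, and your explicit remark about the order in which $p$, $s$, and $N$ are frozen is exactly the point the paper's last two lines make implicitly.
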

\begin{proof}
  Let $\eps > 0$.  Then
  \begin{align*}
    \mathbb P\left(\frac{|T^{(1)}-T^*|}{N^{2/3}} > \eps \right) = \mathbb P\left(\frac{|T^{(1)}-T^*|}{N^{2/3}} > \eps, G_{N,p} \cap R_{N,s} \right) + \mathbb P\left(\frac{|T^{(1)}-T^*|}{N^{2/3}} > \eps, G_{N,p}^c \cup R_{N,s}^c \right).
  \end{align*}
  If $s$ satisfies the hypotheses in Lemma~\ref{l:o1}, $s < \min\{\sigma/44,p/8\}$, then on the set $G_{N,p} \cap R_{N,s}$, $N^{-2/3}|T-T^*| < \eps$ for $N$ sufficiently large, and hence
  \begin{align*}
\mathbb P\left( \frac{|T^{(1)}-T^*|}{N^{2/3}} > \eps, G_{N,p} \cap R_{N,s} \right) \to 0,
\end{align*}
as $N \to \infty$.  We then estimate
  \begin{align*}
    \mathbb P\left(\frac{|T^{(1)}-T^*|}{N^{2/3}} > \eps, G_{N,p}^c \cup R_{N,s}^c \right) &\leq \mathbb P(G_{N,p}^c) + \mathbb P(R_{N,s}^c),
    \end{align*}
    and by Theorem~\ref{t:generic}
    \begin{align*}
    \limsup_{N \to \infty} \mathbb P\left(\frac{|T^{(1)}-T^*|}{N^{2/3}} > \eps, G_{N,p}^c \cup R_{N,s}^c \right) &\leq \limsup_{N \to \infty}  \mathbb P(G_{N,p}^c).
  \end{align*}
  This is true for any $0< p < 1/3$ and we use Theorem~\ref{t:p}.  So, as $p \downarrow 0$, we find
  \begin{align*}
    \lim_{N \to \infty}\mathbb P\left(\frac{|T^{(1)}-T^*|}{N^{2/3}} > \eps \right) = 0.
  \end{align*}
  \end{proof}

Define
\begin{align}\label{e:t-hat}
  \hat T = \frac{(\alpha - 4/3) \log N}{\delta_{N-1}}.
\end{align}
We need the following simple lemmas in what follows
\begin{lemma}\label{l:whp}
If $X_N {\to} X$ in distribution\footnote{For convergence in distribution, we require that the limiting random variable $X$ satisfies $\mathbb P( |X| < \infty) = 1$.} as $N \to \infty$ then
\begin{align*}
\mathbb P \left( |X_N/a_N|<1 \right) = 1 + o(1)
\end{align*}
as $N \to \infty$ provided that $a_N \to \infty$.
\end{lemma}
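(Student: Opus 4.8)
The plan is to reduce the statement to the definition of convergence in distribution together with the hypothesis (recorded in the footnote) that $X$ is almost surely finite. First I would fix $\varepsilon>0$ and, using $\mathbb P(|X|<\infty)=1$, choose a finite threshold $M=M(\varepsilon)$ with $\mathbb P(|X|<M)>1-\varepsilon$; enlarging or shrinking $M$ slightly if necessary, I may assume $\pm M$ are continuity points of the distribution function $F_X$ of $X$, so that the open interval $(-M,M)$ is a continuity set for the law of $X$.

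Next, since $a_N\to\infty$, there is an $N_0$ such that $a_N>M$ for all $N\ge N_0$. For such $N$ the event $\{|X_N|<M\}$ is contained in $\{|X_N|<a_N\}=\{|X_N/a_N|<1\}$, and therefore
\begin{align*}
\mathbb P\left(|X_N/a_N|<1\right)\ge \mathbb P\left(|X_N|<M\right).
\end{align*}
Then I would invoke the Portmanteau theorem (equivalently, the convergence $F_{X_N}(\pm M)\to F_X(\pm M)$ at the chosen continuity points): $X_N\to X$ in distribution gives
\begin{align*}
\liminf_{N\to\infty}\mathbb P\left(|X_N|<M\right)\ge \mathbb P\left(|X|<M\right)>1-\varepsilon .
\end{align*}
Combining the last two displays yields $\liminf_{N\to\infty}\mathbb P(|X_N/a_N|<1)\ge 1-\varepsilon$, and since probabilities never exceed $1$ and $\varepsilon>0$ was arbitrary, $\mathbb P(|X_N/a_N|<1)=1+o(1)$, which is the claim.

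There is essentially no obstacle here; the one point that warrants a moment's care is that $a_N$ is not a fixed constant, so convergence in distribution cannot be applied directly to the (moving) set $\{|x|<a_N\}$. This is precisely what the monotone sandwiching by the fixed level $M$ above circumvents, and the finiteness assumption $\mathbb P(|X|<\infty)=1$ is exactly what guarantees that such an $M$ exists.
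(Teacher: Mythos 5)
Your proof is correct and follows essentially the same route as the paper's: sandwich the moving threshold $a_N$ by a fixed level $M$ chosen so that $\pm M$ are continuity points, apply convergence in distribution at the fixed level, and then exhaust the tail (you send $\varepsilon\to 0$, the paper sends $M\to\infty$; these are the same step). Your remark about why the sandwich is needed — that $\{|x|<a_N\}$ is a moving set — is exactly the point, and the argument matches the paper's in substance.
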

\begin{proof}
  For two points of continuity $a,b$ of $F(t) = \mathbb P(X\leq t)$ we have
  \begin{align*}
    \mathbb P\left( a < X_N \leq b \right) \to \mathbb P(a < X \leq b).
  \end{align*}
  Let $M>0$ such that $\pm M$ is a point of continuity of $F$. Then for sufficiently large $N$, $a_N > M$ and
  \begin{align*}
    \liminf_{N \to \infty} \mathbb P( -a_N < X_N < a_N) \geq \liminf_{N \to \infty}\mathbb P( -M < X_N \leq M) = \mathbb P(-M < X \leq M).
  \end{align*}
  Letting $M \to \infty$ we see that $\mathbb P( -a_N \leq X_N \leq a_N) = 1 + o(1)$ as $N \to \infty$.
\end{proof}
\noindent Letting $a_N \to \eta a_N$, $\eta > 0$, we see that the following is true.
\begin{corollary}\label{c:whp}
If $X_N {\to} X$ in distribution as $N \to \infty$ then
\begin{align*}
|X_N/a_N|
\end{align*}
converges to zero in probability provided $a_N \to \infty$.
\end{corollary}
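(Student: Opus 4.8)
The plan is to deduce the corollary directly from Lemma~\ref{l:whp} by a simple rescaling, exactly as the sentence preceding the statement suggests. Fix an arbitrary $\eta > 0$; it suffices to show that $\mathbb P(|X_N/a_N| > \eta) \to 0$ as $N \to \infty$. Rewrite the relevant event as $\{|X_N/a_N| > \eta\} = \{|X_N/(\eta a_N)| > 1\}$, and observe that since $a_N \to \infty$ and $\eta > 0$ is a fixed positive constant, the sequence $b_N := \eta a_N$ also satisfies $b_N \to \infty$. The hypothesis $X_N \to X$ in distribution with $\mathbb P(|X| < \infty) = 1$ is of course unaffected by this change.

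Next I would apply Lemma~\ref{l:whp} with $a_N$ there replaced by $b_N = \eta a_N$. This yields $\mathbb P(|X_N/(\eta a_N)| < 1) = 1 + o(1)$, and therefore $\mathbb P(|X_N/a_N| > \eta) \leq \mathbb P(|X_N/(\eta a_N)| \geq 1) = o(1)$ as $N \to \infty$. Since $\eta > 0$ was arbitrary, this is precisely the statement that $|X_N/a_N|$ converges to zero in probability.

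There is essentially no obstacle here. The only minor bookkeeping point is that Lemma~\ref{l:whp} is phrased with the strict inequality $|X_N/a_N| < 1$, so one passes to the complementary event to control $\mathbb P(|X_N/(\eta a_N)| \geq 1)$; this is immediate because the probability of the complement tends to zero. I expect the entire argument to occupy at most a couple of lines once Lemma~\ref{l:whp} is available.
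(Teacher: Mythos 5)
Your argument is correct and matches the paper's own reasoning, which is precisely the one-line remark preceding the corollary: replace $a_N$ by $\eta a_N$ in Lemma~\ref{l:whp} and let $\eta$ be arbitrary. You have simply written out the bookkeeping explicitly.
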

\begin{lemma}\label{l:p2d}
  If as $N \to \infty$, $X_N \to X$ in distribution and $|X_N-Y_N| \to 0$ in probability then $Y_N \to X$ in distribution.
\end{lemma}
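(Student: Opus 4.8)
The plan is to verify the defining property of convergence in distribution directly: for every continuity point $t$ of the cumulative distribution function $F(t) = \mathbb{P}(X \leq t)$, one must show $\mathbb{P}(Y_N \leq t) \to F(t)$ as $N \to \infty$. The mechanism is the standard ``sandwiching'' argument underlying Slutsky's theorem: modulo the small-probability event $\{|X_N - Y_N| > \delta\}$, the event $\{Y_N \leq t\}$ is trapped between $\{X_N \leq t - \delta\}$ and $\{X_N \leq t + \delta\}$.

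First I would fix $\delta > 0$ and record the two-sided bound
\[
\mathbb{P}(X_N \leq t - \delta) - \mathbb{P}(|X_N - Y_N| > \delta) \;\leq\; \mathbb{P}(Y_N \leq t) \;\leq\; \mathbb{P}(X_N \leq t + \delta) + \mathbb{P}(|X_N - Y_N| > \delta),
\]
which follows from the inclusions $\{Y_N \leq t\} \subseteq \{X_N \leq t + \delta\} \cup \{|X_N - Y_N| > \delta\}$ and $\{X_N \leq t - \delta\} \subseteq \{Y_N \leq t\} \cup \{|X_N - Y_N| > \delta\}$. Since $|X_N - Y_N| \to 0$ in probability, $\mathbb{P}(|X_N - Y_N| > \delta) \to 0$. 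Choosing $t \pm \delta$ to be continuity points of $F$ — possible for a dense set of $\delta$ because $F$ has at most countably many discontinuities — the hypothesis $X_N \to X$ in distribution gives $\mathbb{P}(X_N \leq t \pm \delta) \to F(t \pm \delta)$. Taking $\limsup_N$ and $\liminf_N$ therefore yields
\[
F(t - \delta) \;\leq\; \liminf_{N \to \infty} \mathbb{P}(Y_N \leq t) \;\leq\; \limsup_{N \to \infty} \mathbb{P}(Y_N \leq t) \;\leq\; F(t + \delta).
\]

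Finally I would let $\delta \downarrow 0$ along such continuity points; since $t$ is itself a continuity point of $F$, both $F(t - \delta)$ and $F(t + \delta)$ converge to $F(t)$, which forces $\lim_{N \to \infty} \mathbb{P}(Y_N \leq t) = F(t)$. As $t$ was an arbitrary continuity point of $F$, this is precisely the assertion $Y_N \to X$ in distribution. There is no genuine obstacle in this argument; the only point requiring a line of care is the selection of $\delta$ so that $t \pm \delta$ avoid the (countable) discontinuity set of $F$, which is immediate.
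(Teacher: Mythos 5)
Your proof is correct and follows essentially the same sandwiching argument as the paper: bound $\mathbb{P}(Y_N \le t)$ above and below by shifted CDFs of $X_N$ plus the small term $\mathbb{P}(|X_N - Y_N| > \delta)$, take $\limsup$/$\liminf$, and send $\delta \downarrow 0$ along continuity points. The only difference is cosmetic: you state the continuity-point selection for $t \pm \delta$ a bit more explicitly than the paper does.
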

\begin{proof}
  Let $t$ be a point of continuity for $\mathbb P(X \leq t)$, then for $\eps > 0$
  \begin{align*}
    \mathbb P( Y_N \leq t) &= \mathbb P(Y_N \leq t, X_N \leq t + \eps) + \mathbb P(Y_N \leq t, X_N > t + \eps)\\
    & \leq \mathbb P(X_N \leq t + \eps ) + \mathbb P(Y_N - X_N \leq t - X_N, t- X_N < - \eps)\\
    &\leq \mathbb P(X_N \leq t + \eps ) + \mathbb P(|Y_N - X_N| > \eps).
  \end{align*}
  Interchanging the roles of $X_N$ and $Y_N$ and replacing $t$ with $t-\eps$ we find
  \begin{align*}
    \mathbb P( X_N \leq t-\eps) \leq \mathbb P(Y_N \leq t) + \mathbb P(|Y_N - X_N| > \eps) \leq  \mathbb P(X_N \leq t + \eps ) + 2\mathbb P(|Y_N - X_N| > \eps).
  \end{align*}
  From this we find that for any $\eta$ such that $t \pm \eta$ are points of continuity
  \begin{align*}
    \mathbb P( X \leq t-\eps) &\leq \liminf_{N \to \infty} \mathbb P(Y_N \leq t) \leq \limsup_{N \to \infty} \mathbb P(Y_N \leq t) \leq \mathbb P( X \leq t+\eps).
  \end{align*}
  By sending $\eps \downarrow 0$ the result follows.
\end{proof}

Now, we compare $T^*$ to $\hat T$.
\begin{lemma}\label{l:tsth}
  For $\alpha \geq 10/3 + \sigma$
  \begin{align*}
    \frac{|T^*-\hat T|}{N^{2/3}\log N}
  \end{align*}
  converges to zero in probability as $N \to \infty$.
\end{lemma}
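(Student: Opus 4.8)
The plan is to reduce the claim to a boundedness-in-probability estimate by writing $T^*-\hat T$ in closed form. Subtracting \eqref{e:t-hat} from \eqref{e:t-star} and using $\log\delta_{N-1}=\log(N^{2/3}\delta_{N-1})-\tfrac23\log N$ gives
\begin{align*}
T^*-\hat T=\frac{\tfrac43\log N+2\log\delta_{N-1}+\log\nu_{N-1}-2\log 2}{\delta_{N-1}}
=\frac{2\log(N^{2/3}\delta_{N-1})+\log\nu_{N-1}-2\log 2}{\delta_{N-1}}.
\end{align*}
Note that the coefficient $\alpha-4/3$ in the definition of $\hat T$ is chosen precisely so that the $\alpha\log N$ term from $T^*$ and the $-\tfrac43\log N$ produced by the rescaling both cancel. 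Dividing by $N^{2/3}\log N$,
\begin{align*}
\frac{T^*-\hat T}{N^{2/3}\log N}=\frac{W_N}{\log N},\qquad
W_N:=\frac{2\log(N^{2/3}\delta_{N-1})+\log\nu_{N-1}-2\log 2}{N^{2/3}\delta_{N-1}},
\end{align*}
so it suffices to show that $W_N$ is bounded in probability. Indeed, once that is known, for any $\eps>0$ and any fixed $M$ we have $\mathbb P(|W_N|/\log N>\eps)=\mathbb P(|W_N|>\eps\log N)\le\mathbb P(|W_N|>M)$ for all sufficiently large $N$, and since the right-hand side can be made arbitrarily small by taking $M$ large, $\limsup_N\mathbb P(|W_N|/\log N>\eps)=0$.

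To control $W_N$, I would invoke Theorem~\ref{t:gap-limit} to pin down its two random ingredients at the correct scale. Since $2^{-2/3}N^{2/3}(b_V-\lambda_N,\,b_V-\lambda_{N-1})$ converges jointly in distribution to $(\Lambda_{1,\beta},\Lambda_{2,\beta})$, we get that $N^{2/3}\delta_{N-1}=2N^{2/3}(\lambda_N-\lambda_{N-1})$ converges in distribution to $2^{5/3}(\Lambda_{2,\beta}-\Lambda_{1,\beta})$, which is almost surely finite and, by the distinctness assertion in Theorem~\ref{t:gap-limit}, almost surely strictly positive. Likewise, $N^{1/2}(\beta_N,\beta_{N-1})$ converges jointly in distribution to $(|X_1|,|X_2|)$, so by the continuous mapping theorem $\nu_{N-1}=\beta_{N-1}^2/\beta_N^2$ converges in distribution to $|X_2|^2/|X_1|^2\in(0,\infty)$ almost surely (as $X_1\ne 0$ a.s.). Hence, for every $\eta>0$ there exist constants $0<a<b<\infty$ with
\begin{align*}
\liminf_{N\to\infty}\mathbb P\big(a\le N^{2/3}\delta_{N-1}\le b,\ a\le\nu_{N-1}\le b\big)\ge 1-\eta,
\end{align*}
and on this event $|W_N|\le a^{-1}\big(3\max\{|\log a|,|\log b|\}+2\log 2\big)$. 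Thus $\limsup_N\mathbb P(|W_N|>M)\le\eta$ once $M$ exceeds this bound, and since $\eta$ was arbitrary, $W_N$ is bounded in probability; combined with the reduction above this proves the lemma.

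There is no genuine obstacle here: the lemma is essentially bookkeeping once the explicit formula for $T^*-\hat T$ is written down. The only points requiring a little care are (i) the algebraic cancellation that removes the $\log N$ growth from the numerator after rescaling — the reason $\hat T$ carries the coefficient $\alpha-4/3$ — and (ii) the observation that the relevant limiting random variables $\Lambda_{2,\beta}-\Lambda_{1,\beta}$ and $|X_2/X_1|$ are almost surely \emph{nonzero} as well as finite, so that $\log(N^{2/3}\delta_{N-1})$, $\log\nu_{N-1}$ and $1/(N^{2/3}\delta_{N-1})$ are all tight. Marginal tightness of each factor is all that is used, so joint convergence in distribution of the eigenvalue gaps together with the eigenvector components is not needed; this also automatically handles the (vanishing-probability) event $\delta_{N-1}=0$ on which $T^*$ and $\hat T$ are undefined.
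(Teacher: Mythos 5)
Your proof is correct and follows essentially the same route as the paper: write out $T^*-\hat T$ explicitly, observe that the $\log N$ terms cancel leaving only quantities that are bounded in probability by Theorem~\ref{t:gap-limit} (tightness of $\log\nu_{N-1}$, $\log N^{2/3}\delta_{N-1}$ and $(N^{2/3}\delta_{N-1})^{-1}$), and divide by $\log N$. The paper packages the tightness step via events $L_N, U_N, P_N$ defined with a $(\log N)^{1/4}$ scale and its Lemma~\ref{l:whp}, whereas you invoke tightness and the continuous mapping theorem directly; these are the same idea, and you are in fact slightly more careful in retaining the constant $-2\log 2$ that the paper silently absorbs.
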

\begin{proof}
  Consider
  \begin{align*}
    \frac{T^* - \hat T}{N^{2/3}\log N} &= \frac{1}{\log N}  \frac{\log \nu_{N-1} + 2\log N^{2/3}\delta_{N-1}}{N^{2/3}\delta_{N-1}} \\
    &= \frac{1}{\sqrt{\log N}} \left(\frac{1}{(\log N)^{1/4}}|N^{2/3}\delta_{N-1}|^{-1}\right) \left(\frac{2}{(\log N)^{1/4}}\log \nu_{N-1} + \frac{1}{(\log N)^{1/4}}\log N^{2/3}\delta_{N-1} \right).
  \end{align*}
  For
  \begin{align*}
    L_{N} &= \left\{ \frac{1}{(\log N)^{1/4}}|N^{2/3}\delta_{N-1}|^{-1} \leq 1 \right\},\\
    U_{N} &= \left\{ \frac{1}{(\log N)^{1/4}}|\log \nu_{N-1}| \leq  1 \right\},\\
    P_{N} &= \left\{ \frac{1}{(\log N)^{1/4}}|\log N^{2/3}\delta_{N-1}| \leq  1 \right\},
  \end{align*}
we have $\mathbb P(L_N^c) + \mathbb P(U_N^c) + \mathbb P(P_n^c) \to 0$ as $N \to \infty$ by \lem{l:whp} and Theorem~\ref{t:gap-limit}. For these calculations it is important that the limiting distribution function for $N^{2/3}\delta_{N-1}$ is continous at zero, see Theorem~\ref{t:gap-limit}. Then for $\eps > 0$
  \begin{align}\label{e:p-split}\begin{split}
    \mathbb P \left( \left|\frac{T^* - \hat T}{N^{2/3}\log N}\right| > \eps \right) &= \mathbb P \left( \left|\frac{T^* - \hat T}{N^{2/3}\log N}\right| > \eps, L_N \cap U_N  \cap P_N\right)\\
    &+ \mathbb P \left( \left|\frac{T^* - \hat T}{N^{2/3}\log N}\right| > \eps, L_N^c \cup U_N^c \cup P_N^c \right).
  \end{split}\end{align}
  On the set $L_N \cap U_N \cap P_N$ we estimate
  \begin{align*}
  \left|\frac{T^* - \hat T}{N^{2/3}\log N}\right| \leq \frac{3}{\sqrt{\log N}}.
  \end{align*}
  Hence first term on the right-hand side of \eqref{e:p-split} is zero for sufficiently large $N$ and the second term is bounded by $\mathbb P(U_N^c) + \mathbb P(L_N^c) + \mathbb P(P_N^c)$ which tends to zero.  This shows convergence in probability.
\end{proof}
We now arrive at our main result.
\begin{theorem}\label{t:main-alpha}
  If $\alpha \geq 10/3 + \sigma$ and $\sigma > 0$ then
  \begin{align*}
    \lim_{N\to \infty} \mathbb P\left( \frac{2^{2/3}T^{(1)}}{c_V^{2/3}(\alpha-4/3) N^{2/3} \log N} \leq t\right) =  F^{\mathrm{gap}}_\beta(t).
  \end{align*}
\end{theorem}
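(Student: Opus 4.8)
\noindent The plan is to obtain Theorem~\ref{t:main-alpha} by assembling the approximation results already proved, the mechanism being that, at the scale $N^{2/3}\log N$, the halting time $T^{(1)}$ is indistinguishable from the explicit quantity $\hat T=(\alpha-4/3)\log N/\delta_{N-1}$ of \eqref{e:t-hat}. Since $\delta_{N-1}=2(\lambda_N-\lambda_{N-1})$, the quantity $\hat T$ depends on $H$ only through the top eigenvalue gap, and the rescaled reciprocal of that gap is precisely the object whose limiting law is $F^{\mathrm{gap}}_\beta$ by Definition~\ref{def:Fbeta}. So the limiting law of $T^{(1)}$, suitably normalized, should simply be inherited from edge/gap universality.

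Concretely, I would first collapse the two approximation steps. Lemma~\ref{l:tts} gives $|T^{(1)}-T^*|/N^{2/3}\to 0$ in probability, hence a fortiori $|T^{(1)}-T^*|/(N^{2/3}\log N)\to 0$ in probability; Lemma~\ref{l:tsth} gives $|T^*-\hat T|/(N^{2/3}\log N)\to 0$ in probability; adding, $|T^{(1)}-\hat T|/(N^{2/3}\log N)\to 0$ in probability. Next I would multiply through by the constant $2^{2/3}/(c_V^{2/3}(\alpha-4/3))$. This is harmless even though $\alpha=\alpha_N$ may vary with $N$, because in the scaling region $\alpha-4/3\geq 2+\sigma$ is bounded away from zero, so the constant is uniformly bounded; thus
\[
\frac{2^{2/3}T^{(1)}}{c_V^{2/3}(\alpha-4/3)N^{2/3}\log N}=\frac{2^{2/3}\hat T}{c_V^{2/3}(\alpha-4/3)N^{2/3}\log N}+o_{\mathbb P}(1).
\]

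It then remains to identify the limit of the $\hat T$ term. Substituting $\hat T=(\alpha-4/3)\log N/\delta_{N-1}$, the factor $(\alpha-4/3)\log N$ cancels, and a one-line computation (matching the powers of $2$, using $\delta_{N-1}=2(\lambda_N-\lambda_{N-1})$) identifies this term with the normalized reciprocal gap appearing in Definition~\ref{def:Fbeta}; in particular it is free of any $\alpha$-dependence. By Theorem~\ref{t:gap-limit} and Definition~\ref{def:Fbeta} it therefore converges in distribution to $1/(\Lambda_{2,\beta}-\Lambda_{1,\beta})$, whose distribution function is $F^{\mathrm{gap}}_\beta$; since $\Lambda_{2,\beta}-\Lambda_{1,\beta}>0$ almost surely, this limiting distribution function is continuous. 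Finally, Lemma~\ref{l:p2d} (convergence together) lets the $o_{\mathbb P}(1)$ perturbation be absorbed without changing the limiting law, and continuity of $F^{\mathrm{gap}}_\beta$ gives $\mathbb P(\cdots\le t)\to F^{\mathrm{gap}}_\beta(t)$ at every $t$ (the event being vacuous for $t\le 0$), which is the assertion. Theorem~\ref{t:main} then follows by taking $\alpha=2\log\epsilon^{-1}/\log N$, which lies in $[10/3+\sigma,\infty)$ exactly when $(\epsilon,N)$ is in the scaling region of Definition~\ref{def:scaling}.

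The main obstacle is not in this final assembly but in what feeds it: the chain $T^{(1)}\approx T^*\approx\hat T$ rests on the deterministic estimates of Section~\ref{s:universality} for $E_0$, $E_0'$ and $E_1$ combined with the high-probability validity of Conditions~\ref{cond:rigidity} and \ref{cond:gap} (Theorems~\ref{t:generic} and \ref{t:p}), and the distributional content comes entirely from the edge/gap universality and stochastic-Airy statements collected in Theorem~\ref{t:gap-limit}. Granting those, the only points requiring genuine care here are the two already flagged: that $\alpha$ may depend on $N$ (handled because $\alpha-4/3$ is bounded below and the main term has no residual $\alpha$-dependence), and that the limiting gap law has a continuous distribution function (which is exactly what legitimizes the convergence-together step pointwise in $t$).
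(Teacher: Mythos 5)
Your proof is correct and follows the paper's own route exactly: combine Lemmas~\ref{l:tts} and~\ref{l:tsth} to get $|T^{(1)}-\hat T|/(N^{2/3}\log N)\to 0$ in probability, then invoke Lemma~\ref{l:p2d} together with Theorem~\ref{t:gap-limit} and Definition~\ref{def:Fbeta} to transfer the limiting gap law to $T^{(1)}$. Your additional remarks --- that $\alpha-4/3\geq 2+\sigma$ is bounded away from zero so the $\alpha$-dependent constant is benign, and that $F^{\mathrm{gap}}_\beta$ is continuous so the convergence-together step gives convergence at every $t$ --- are correct and are left implicit in the paper.

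One caveat: the ``one-line computation'' you defer does not actually close as Theorem~\ref{t:main-alpha} is written. Since $\hat T=(\alpha-4/3)\log N/\delta_{N-1}$ and $\delta_{N-1}=2(\lambda_N-\lambda_{N-1})$,
\begin{align*}
\frac{2^{2/3}\hat T}{c_V^{2/3}(\alpha-4/3)N^{2/3}\log N}=\frac{1}{c_V^{2/3}\,2^{1/3}\,N^{2/3}(\lambda_N-\lambda_{N-1})},
\end{align*}
which is off by a factor of $2$ from $\dfrac{1}{c_V^{2/3}2^{-2/3}N^{2/3}(\lambda_N-\lambda_{N-1})}$ in Definition~\ref{def:Fbeta}. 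Likewise, substituting $\epsilon=N^{-\alpha/2}$ into Theorem~\ref{t:main} gives $\log\epsilon^{-1}-\tfrac{2}{3}\log N=\tfrac{\alpha-4/3}{2}\log N$, so consistency requires $2^{5/3}$, not $2^{2/3}$, in the numerator of Theorem~\ref{t:main-alpha}. This is a bookkeeping slip in the paper's statement (the paper's own proof repeats it); your argument is sound once the constant is corrected to $2^{5/3}$, and the structure of your proof is unaffected.
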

\begin{proof}
Combining \lem{l:tts} and \lem{l:tsth} we have that
\begin{align*}
\left|2^{2/3}\frac{T^{(1)}-\hat T}{c_V^{2/3}(\alpha-4/3) N^{2/3}\log N}\right|
\end{align*}
converges to zero in probability.  Then by \lem{l:p2d} and Theorem~\ref{t:gap-limit} the result follows as
\begin{align*}
\lim_{N \to \infty} \mathbb P \left( \frac{2^{2/3} \hat T}{c_V^{2/3}(\alpha-4/3) N^{2/3} \log N} \right) = \lim_{N \to \infty} \mathbb P ( c_V^{-2/3}2^{2/3}N^{-2/3} (\lambda_N-\lambda_{N-1})^{-1} \leq t ) = F^{\mathrm{gap}}_\beta(t).
\end{align*}
\end{proof}

We also prove a result concerning the true error $|\lambda_N - X_{11}(T^{(1)})|$:
  \begin{proposition}\label{p:error-alpha}
  For $\alpha \geq 10/3 + \sigma$ and $\sigma > 0$ and any $q < 1$
  \begin{align*}
    N^{\alpha/2 + q}|\lambda_N - X_{11}(T^{(1)})|
  \end{align*}
  converges to zero in probability as $N \to \infty$.  Furthermore, for any $r > 0$
  \begin{align*}
  N^{2/3+r}|\gamma_N - X_{11}(T^{(1)})|,\quad N^{2/3+r}|\lambda_j - X_{11}(T^{(1)})|,
  \end{align*}
  converges to $\infty$ in probability, if $j = j(N) < N$.
  \end{proposition}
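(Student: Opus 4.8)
The plan is to reduce the entire statement to the deterministic bound of \lem{l:error}, which holds on the event $R_{N,s}$, and then to upgrade to probabilistic conclusions via $\mathbb P(R_{N,s}) = 1 + o(1)$ (Theorem~\ref{t:generic}) together with the distributional limits in Theorem~\ref{t:gap-limit}. I would fix $p$ small and then $s$ with $s < \min\{\sigma/44,\, p/8,\, r\}$, the extra constraint $s<r$ being needed only for the last part; also recall $\gamma_N = b_V$, the right endpoint of the support of the equilibrium measure $\mu$, since $\gamma_N$ is its $1$-quantile. For the first claim, on $R_{N,s}$ \lem{l:error} gives $|\lambda_N - X_{11}(T^{(1)})| \le C N^{-\alpha/2-1}$ with $C$ independent of $N$ and of $\alpha$, so $N^{\alpha/2+q}|\lambda_N - X_{11}(T^{(1)})| \le C N^{q-1}\to 0$ because $q<1$; splitting
\begin{align*}
\mathbb P\big(N^{\alpha/2+q}|\lambda_N - X_{11}(T^{(1)})| > \eps\big) \le \mathbb P(R_{N,s}^c) + \mathbb P\big(N^{\alpha/2+q}|\lambda_N - X_{11}(T^{(1)})| > \eps,\ R_{N,s}\big),
\end{align*}
the second term is $0$ for $N$ large and the first is $o(1)$ by Theorem~\ref{t:generic}, which gives convergence to zero in probability.

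For the statement about $\gamma_N = b_V$, the plan is to decompose
\begin{align*}
2^{-2/3}N^{2/3}\big(b_V - X_{11}(T^{(1)})\big) = 2^{-2/3}N^{2/3}(b_V-\lambda_N) + 2^{-2/3}N^{2/3}\big(\lambda_N - X_{11}(T^{(1)})\big).
\end{align*}
On $R_{N,s}$ the last term is $O(N^{2/3-\alpha/2-1}) = o(1)$ by \lem{l:error}, so (using $\mathbb P(R_{N,s})\to1$) the difference between $2^{-2/3}N^{2/3}(b_V - X_{11}(T^{(1)}))$ and $2^{-2/3}N^{2/3}(b_V-\lambda_N)$ tends to $0$ in probability. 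Since the latter converges in distribution to $\Lambda_{1,\beta}$ by Theorem~\ref{t:gap-limit}, \lem{l:p2d} yields $2^{-2/3}N^{2/3}(b_V - X_{11}(T^{(1)})) \to \Lambda_{1,\beta}$ in distribution. As $\Lambda_{1,\beta}$ has a continuous distribution (an affine image of the Tracy--Widom$_\beta$ law, which is absolutely continuous), $\mathbb P(|\Lambda_{1,\beta}| \le \delta)\to 0$ as $\delta\downarrow 0$; hence, given $M,\eps>0$, choosing a continuity point $\delta>0$ of the law of $|\Lambda_{1,\beta}|$ with $\mathbb P(|\Lambda_{1,\beta}| \le \delta)<\eps$, for $N$ large we get $\mathbb P\big(|2^{-2/3}N^{2/3}(b_V - X_{11}(T^{(1)}))| \le \delta\big)<\eps$, while on the complement $N^{2/3+r}|\gamma_N - X_{11}(T^{(1)})| = 2^{2/3}N^r\,|2^{-2/3}N^{2/3}(b_V-X_{11}(T^{(1)}))| \ge 2^{2/3}\delta N^r > M$ for $N$ large. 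This is convergence to $\infty$ in probability.

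For the statement about $\lambda_j$, $j=j(N)<N$: on $R_{N,s}$, \cond{rigidity}.3 gives $\lambda_N-\lambda_{N-1}\ge N^{-2/3-s}$, and since the eigenvalues are ordered increasingly, $\lambda_N-\lambda_j \ge \lambda_N-\lambda_{N-1} \ge N^{-2/3-s}$ for every $j\le N-1$, uniformly. Combining with \lem{l:error},
\begin{align*}
|\lambda_j - X_{11}(T^{(1)})| \ge (\lambda_N-\lambda_j) - |\lambda_N - X_{11}(T^{(1)})| \ge N^{-2/3-s} - C N^{-\alpha/2-1} \ge \tfrac12 N^{-2/3-s}
\end{align*}
for $N$ large, because $\alpha/2 + 1 - (2/3+s) \ge 2 - s > 0$; hence $N^{2/3+r}|\lambda_j - X_{11}(T^{(1)})| \ge \tfrac12 N^{r-s}\to\infty$ since $s<r$. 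A final split over $R_{N,s}$ and $R_{N,s}^c$, as in the first claim, converts this into convergence to $\infty$ in probability.

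The only point that is not routine bookkeeping is the use, in the $\gamma_N$ part, of the absence of an atom of $\Lambda_{1,\beta}$ at $0$: this is exactly what prevents $X_{11}(T^{(1)})$ from being within $o(N^{-2/3})$ of $b_V$, and hence what makes the $N^{2/3+r}$ prefactor diverge. Everything else follows from \lem{l:error} and the ``high-probability event plus distributional limit'' argument already used repeatedly in Section~\ref{s:prob}.
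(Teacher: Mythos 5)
Your proof is correct and follows essentially the same route as the paper's: the deterministic bound of Lemma~\ref{l:error} on $R_{N,s}$, the $R_{N,s}/R_{N,s}^c$ split via Theorem~\ref{t:generic}, the triangle inequality with Condition~\ref{cond:rigidity}.3 for the $\lambda_j$ part, and the lack of an atom at zero for $\Lambda_{1,\beta}$ in the $\gamma_N = b_V$ part. The only cosmetic difference is in that last step: the paper bounds $N^{2/3+r}|b_V - X_{11}(T^{(1)})|$ from below on $R_{N,s}$ and then applies Corollary~\ref{c:whp} to the reciprocal, while you pass through Lemma~\ref{l:p2d} to show $2^{-2/3}N^{2/3}(b_V - X_{11}(T^{(1)})) \to \Lambda_{1,\beta}$ in distribution and then use absolute continuity; both hinge on the same fact and are equivalent.
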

  \begin{proof}
    We recall that $R_{N,s}$ is the set on which Condition~\ref{cond:rigidity} holds.  Then for any $\eta > 0$
    \begin{align*}
      \mathbb P(N^{\alpha/2 + q}|\lambda_N &- X_{11}(T^{(1)})| > \eta) \\
      &= \mathbb P(N^{\alpha/2 + q}|\lambda_N - X_{11}(T^{(1)})| > \eta, R_{N,s}) + \mathbb P(N^{\alpha/2 + q}|\lambda_N - X_{11}(T^{(1)})| > \eta, R^c_{N,s})\\
      & \leq \mathbb P(N^{\alpha/2 + q}|\lambda_N - X_{11}(T^{(1)})| > \eta, R_{N,s}) + \mathbb P(R_{N,s}^c).
    \end{align*}
    Using Lemma~\ref{l:error}, the first term on the right-hand side is zero for sufficiently large $N$ and the second term vanishes from Theorem~\ref{t:generic}.  This shows the first statement, \emph{i.e.},
    \begin{align*}
      \lim_{N \to \infty} \mathbb P(N^{\alpha/2 + q}|\lambda_N &- X_{11}(T^{(1)})| > \eta) = 0.
    \end{align*}

    For the second statement, on the set $R_{N,s}$ with $s < \min\{r,\sigma/44,p/8\}$ we have
  \begin{align*}
  	|\lambda_j - X_{11}(T^{(1)})| \geq |\lambda_j - \lambda_N| - |\lambda_N-X_{11}(T^{(1)})| \geq |\lambda_{N-1} - \lambda_N| - |\lambda_N-X_{11}(T^{(1)})|,
  \end{align*}
  and for sufficiently large $N$ (see Lemma~\ref{l:error})
  \begin{align*}
  N^{2/3+r}|\lambda_j - X_{11}(T^{(1)})| \geq N^{r} (N^{2/3}|\lambda_{N-1} - \lambda_N| - N^{-1/3-\alpha/2}) \geq N^{r-s} (1 - CN^{-1/3-\alpha/2+s}).
  \end{align*}
  This tends to $\infty$ as $s< 1/3$ and $s < r$.   Hence for any $K> 0$, again using the arguments of Theorem~\ref{t:main-alpha},
  \begin{align*}
    \mathbb P &\left( N^{2/3+r}|\lambda_j - X_{11}(T^{(1)})| > K \right)\\
    & =  \mathbb P \left( N^{2/3+r}|\lambda_j - X_{11}(T^{(1)})| > K, R_{N,s} \right) +  \mathbb P \left( N^{2/3+r}|\lambda_j - X_{11}(T^{(1)})| > K, R_{N,s}^c \right).
  \end{align*}
  For sufficiently large $N$, the first term on the right-hand side is equal to $\mathbb P(R_{N,s})$ and the second term is bounded by $\mathbb P(R^c_{N,s})$ and hence
  \begin{align*}
\lim_{N\to \infty} \mathbb P &\left( N^{2/3+r}|\lambda_j - X_{11}(T^{(1)})| > K \right) =1.
  \end{align*}
  Next, under the same assumption (Condition~\ref{cond:rigidity})
  \begin{align*}
  N^{2/3+r}|b_V - X_{11}(T^{(1)})| \geq N^{r} (N^{2/3}|b_V - \lambda_N| - CN^{-1/3-\alpha/2}).
  \end{align*}
  From Corollary~\ref{c:whp} and Theorem~\ref{t:gap-limit} using $\gamma_N = b_V$
  \begin{align*}
  N^{-r}(N^{2/3}|b_V - \lambda_N| - CN^{-1/3-\alpha/2})^{-1}
  \end{align*}
  converges to zero in probability (with no point mass at zero), implying its inverse converges to $\infty$ in probability.  This shows $N^{\alpha}|b_V - X_{11}(T^{(1)})|$ converges to $\infty$ in probability.
  \end{proof}

  \section*{Acknowledgments}

  The authors would like to thank Yuri Bakhtin and Paul Bourgade for many useful conversations and the anonymous referee for valuable suggestions.  We also thank Folkmar Bornemann for the data for $F_2^\mathrm{gap}$.  This work was supported in part by grants NSF-DMS-1303018 (TT) and NSF-DMS-1300965 (PD).
  
  \appendix
  
  \section{Invariant and Wigner ensembles}\label{s:ensemble}
  
 The following definitions are taken from \cite{Bourgade2013,Erdos2013,DeiftOrthogonalPolynomials}.  \blue{The first definition appeared initially in \cite{Erdos2012a} and was made more explicit in \cite{Erdos2013}.}  These are the two classes of random matrices to which our results apply.

\begin{definition}[Generalized Wigner Ensemble (WE)]\label{def:WE}
A generalized Wigner matrix (ensemble) is a real symmetric ($\beta = 1$) or Hermitian ($\beta =2$) matrix $H = (H_{ij})_{i,j=1}^N$ such that $H_{ij}$ are independent random variables for $i \leq j$ given by a probability measure $\nu_{ij}$ with
\begin{align*}
\mathbb E H_{ij} = 0, \quad \sigma_{ij}^2 := \mathbb E H_{ij}^2.
\end{align*}
Next, assume there is a fixed constant $v$ (independent of $N,i,j$) such that
\begin{align*}
\mathbb P(|H_{ij}| > x \sigma_{ij}) \leq v^{-1} \exp(-x^v),\quad x > 0.
\end{align*}
Finally, assume there exists $C_1,C_2>0$ such that for all $i,j$
\begin{align*}
\sum_{i=1}^N \sigma_{ij}^2 = 1,\quad
\frac{C_1}{N} \leq \sigma_{ij}^2 \leq \frac{C_2}{N},
\end{align*}
and for $\beta = 2$ the matrix
\begin{align*}
\Sigma_{ij} = \begin{mat} \mathbb E(\real H_{ij})^2 & \mathbb E(\real H_{ij})(\imag H_{ij})\\
\mathbb E(\real H_{ij})(\imag H_{ij}) & \mathbb E(\imag H_{ij})^2 \end{mat}
\end{align*}
has its smallest eigenvalue $\lambda_{\min}$ satisfy $\lambda_{\min} \geq C_1 N^{-1}$.
\end{definition}

\begin{definition}[Invariant Ensemble (IE)]\label{def:IE}
Let $V: \mathbb R \to \mathbb R$ satisfy $V \in C^4(\mathbb R)$, $\inf_{x \in \mathbb R} V''(x) > 0$ and $V(x) > (2 + \delta) \log (1 + |x|)$ for sufficiently large $x$ and some fixed $\delta > 0$.  Then we define an invariant ensemble\footnote{This is not the most general class of $V$ but these assumptions simplify the analysis.} to be the set of all $N\times N$ symmetric ($\beta = 1$) or Hermitian ($\beta = 2$) matrices $H = (H_{ij})_{i,j =1}^N$ with probability density
\begin{align*}
\frac{1}{Z_{N}}\E^{-N\frac{\beta}{2} \mathrm{tr} V(H)} \D H
\end{align*}
Here $\D H = \prod_{i \leq j} \D H_{ij}$ if $\beta = 1$ and $\D H = \prod_{i=1}^N \D H_{ii} \prod_{i < j} \D \real H_{ij} \D\imag H_{ij}$ if $\beta = 2$.

\end{definition}

\bibliographystyle{apalike}
\bibliography{library}

\end{document}